\DeclareMathOperator*{\argmin}{arg\min}
\renewcommand{\L}{\operatorname{L}} 
\newcommand{\mcl}{\mathcal}
\def\J{\mathcal J}
\newcommand{\g}{\mathbf{g}}
\renewcommand{\j}{\mathbf{j}}
\newcommand{\Ll}{\left}
\newcommand{\Rr}{\right}
\newtheorem{theorem}{Theorem}[section]
\newtheorem{lemma}{Lemma}[section]
\theoremstyle{definition}
\newtheorem{definition}{Definition}
\newtheorem{remark}{Remark}[section]
\numberwithin{equation}{section}
\newcommand{\beq}{\begin{equation}}
\newcommand{\bea}[1]{\begin{array}{#1} }
\newcommand{\eeq}{ \end{equation}}
\newcommand{\ea}{ \end{array}}
\def \m {{\mu}}
\def \d {{\delta}}
\def\Xint#1{\mathchoice
{\XXint\displaystyle\textstyle{#1}}%
{\XXint\textstyle\scriptstyle{#1}}%
{\XXint\scriptstyle\scriptscriptstyle{#1}}%
{\XXint\scriptscriptstyle%
\scriptscriptstyle{#1}}%
\!\int}
\def\XXint#1#2#3{{\setbox0=\hbox{$#1{#2#3}{%
\int}$ }
\vcenter{\hbox{$#2#3$ }}\kern-.6\wd0}}
\def\barint{\,\Xint -} 
\def\bariint{\barint_{} \kern-.4em \barint}
\def\bariiint{\bariint_{} \kern-.4em \barint}
\def\mean#1{\mathchoice%
          {\mathop{\kern 0.2em\vrule width 0.6em height 0.69678ex depth -0.58065ex
                  \kern -0.8em \intop}\nolimits_{\kern -0.4em#1}}%
          {\mathop{\kern 0.1em\vrule width 0.5em height 0.69678ex depth -0.60387ex
                  \kern -0.6em \intop}\nolimits_{#1}}%
          {\mathop{\kern 0.1em\vrule width 0.5em height 0.69678ex
              depth -0.60387ex
                  \kern -0.6em \intop}\nolimits_{#1}}%
          {\mathop{\kern 0.1em\vrule width 0.5em height 0.69678ex depth -0.60387ex
                  \kern -0.6em \intop}\nolimits_{#1}}}
\def\vintslides_#1{\mathchoice%
          {\mathop{\kern 0.1em\vrule width 0.5em height 0.697ex depth -0.581ex
                  \kern -0.6em \intop}\nolimits_{\kern -0.4em#1}}%
          {\mathop{\kern 0.1em\vrule width 0.3em height 0.697ex depth -0.604ex
                  \kern -0.4em \intop}\nolimits_{#1}}%
          {\mathop{\kern 0.1em\vrule width 0.3em height 0.697ex depth -0.604ex
                  \kern -0.4em \intop}\nolimits_{#1}}%
          {\mathop{\kern 0.1em\vrule width 0.3em height 0.697ex depth -0.604ex
                  \kern -0.4em \intop}\nolimits_{#1}}}
\newcommand{\aveint}[2]{\mathchoice%
          {\mathop{\kern 0.2em\vrule width 0.6em height 0.69678ex depth -0.58065ex
                  \kern -0.8em \intop}\nolimits_{\kern -0.45em#1}^{#2}}%
          {\mathop{\kern 0.1em\vrule width 0.5em height 0.69678ex depth -0.60387ex
                  \kern -0.6em \intop}\nolimits_{#1}^{#2}}%
          {\mathop{\kern 0.1em\vrule width 0.5em height 0.69678ex depth -0.60387ex
                  \kern -0.6em \intop}\nolimits_{#1}^{#2}}%
          {\mathop{\kern 0.1em\vrule width 0.5em height 0.69678ex depth -0.60387ex
                  \kern -0.6em \intop}\nolimits_{#1}^{#2}}}
\def\eqn#1$$#2$${\begin{equation}\label#1#2\end{equation}}
\def\charfn_#1{{\raise1.2pt\hbox{$\chi
_{\kern-1pt\lower3pt\hbox{{$\scriptstyle#1$}}}$}}}
\def\qq1{q_*}
\def\q2{q_{**}}
\newdimen\vintbar
\def\vint{-\kern-\vintbar\int}
\def\J{\mathcal J}
\def\K{\mathcal K}
\def\L{\mathcal L}
\def\0{\boldsymbol 0}
\newcommand{\R}{\mathbb R}
\newcommand{\N}{\mathbb N}
\newtoks\by
\newtoks\paper
\newtoks\book
\newtoks\jour
\newtoks\yr
\newtoks\pages
\newtoks\vol
\newtoks\publ
\def\name[#1, #2]{#1 #2}
\def\ota{{\hbox{\bf ???}}}
\def\cLear{\by=\ota\paper=\ota\book=\ota\jour=\ota\yr=\ota
\pages=\ota\vol=\ota\publ=\ota}
\def\endpaper{\the\by, \textit{\the\paper},
{\the\jour} \textbf{\the\vol} (\the\yr), \the\pages.\cLear}
\def\endbook{\the\by, \textit{\the\book},
\the\publ, \the\yr.\cLear}
\def\endpap{\the\by, \textit{\the\paper}, \the\jour.\cLear}
\def\endproc{\the\by, \textit{\the\paper}, \the\book, \the\publ,
\the\yr, \the\pages.\cLear}
\renewcommand{\d}{\, \mathrm{d}} 
\begin{document}
\title[Nonlinear Kolmogorov-Fokker-Planck equations]{On regularity and existence of weak solutions to nonlinear Kolmogorov-Fokker-Planck type\\ equations with rough coefficients}

\address{Prashanta Garain \\Department of Mathematics, Uppsala University\\
S-751 06 Uppsala, Sweden}
\email{prashanta.garain@math.uu.se,\,\, pgarain92@gmail.com}

\address{Kaj Nystr\"{o}m\\Department of Mathematics, Uppsala University\\
S-751 06 Uppsala, Sweden}
\email{kaj.nystrom@math.uu.se}


\author{Prashanta Garain and Kaj Nystr{\"o}m}
\maketitle
\begin{abstract}
\noindent \medskip
We consider nonlinear Kolmogorov-Fokker-Planck type equations of the form
\begin{equation}\label{abeqn}
(\partial_t+X\cdot\nabla_Y)u=\nabla_X\cdot(A(\nabla_X u,X,Y,t)).
\end{equation}
 The function $A=A(\xi,X,Y,t):\R^m\times\R^m\times\R^m\times\R\to\R^m$ is assumed to be  continuous with respect to $\xi$, and measurable with respect to $X,Y$ and $t$. $A=A(\xi,X,Y,t)$ is allowed to be nonlinear but with linear growth. We establish higher integrability and local boundedness of weak sub-solutions, weak Harnack and Harnack
  inequalities, and H{\"o}lder continuity with quantitative estimates. In addition we establish existence and uniqueness of weak solutions to a Dirichlet problem in certain  bounded $X$, $Y$ and $t$ dependent domains. \\

\noindent
2020 {\em Mathematics Subject Classification.} 35K65, 35K70, 35H20, 35R03.
\noindent

\medskip

\noindent
{\it Keywords and phrases: Kolmogorov equation, parabolic, ultraparabolic, hypoelliptic,  nonlinear Kolmogorov-Fokker-Planck equations, existence, uniqueness, regularity.}
\end{abstract}

    \setcounter{equation}{0} \setcounter{theorem}{0}

   \section{Introduction and statement of main results}
Several important evolution equations arising in kinetic theory, mathematical physics and probability can be written in the form
\begin{eqnarray}\label{e-kolm-ndcol}
 (\partial_t+X\cdot\nabla_Y)f=\mathcal{Q}(f,\nabla_X f,X,Y,t),
    \end{eqnarray}
where $(X,Y,t):=(x_1,...,x_{m},y_1,...,y_{m},t)\in \mathbb R^{m}\times\mathbb R^{m}\times\mathbb R=\mathbb R^{N+1}$, $N=2m$, $m\geq 1$,  and the coordinates $X=(x_1,...,x_m)$ and $Y=(y_{1},...,y_{m})$ are,
respectively, the velocity and the position of the system.  In its simplest form,
$$\mathcal{Q}(f,\nabla_X f,X,Y,t)=\nabla_X\cdot\nabla_Xf=\Delta_Xf,$$
 the equation in \eqref{e-kolm-ndcol} was introduced and studied by Kolmogorov in a famous note published in 1934 in Annals of Mathematics, see \cite{K}. In this case Kolmogorov noted that the equation in \eqref{e-kolm-ndcol} is an example of a degenerate parabolic
operator having strong regularity properties and he proved that the equation has a fundamental solution which is smooth off its diagonal.  In fact, in this case the equation
in \eqref{e-kolm-ndcol} is hypoelliptic, see \cite{Hormander}.

In kinetic theory,  $f$ represents the evolution of a particle
distribution
$$f(X,Y,t):U_X\times U_Y\times \mathbb R_+\to\mathbb R,\quad U_X,\ U_Y\subset \mathbb R^m,$$
subject to geometric restrictions and models for the interactions and collisions between particles.  In this case the left-hand side in \eqref{e-kolm-ndcol} describes the evolution of $f$ under the action of transport, with the free
streaming operator. The right-hand side describes elastic collisions through the nonlinear Boltzmann collision operator.  The Boltzmann equation is an integro-
(partial)-differential equation with nonlocal operator in the kinetic variable $X$.  The
Boltzmann equation is a fundamental equation in kinetic theory in the sense that it has been derived rigorously, at least in some settings, from microscopic first
principles. In the case of so called Coulomb interactions the Boltzmann collision operator is
ill-defined and Landau proposed an alternative operator for these  interactions, this operator is now called the Landau or the Landau-Coulomb operator. This operator can be stated as in \eqref{e-kolm-ndcol} with
\begin{eqnarray}\label{e-kolm-ndc1}
 \mathcal{Q}(f,\nabla_X f,X,Y,t)=\nabla_X\cdot(A(f)\nabla_Xf+B(f)f),
    \end{eqnarray}
    where again $A(f)=A(f)(X,Y,t)$ and $B(f)=B(f)(X,Y,t)$ are  nonlocal operators in the variable $X$. In this case the equation in \eqref{e-kolm-ndcol} is a nonlinear, or rather quasilinear, drift-diffusion equation with coefficients given by convolution like averages of the unknown.  As mentioned above  the Landau equation is  considered fundamental because of its close link to the Boltzmann equation for Coulomb interactions.

In the case of long-range interactions, the Boltzmann and Landau-Coulomb operators show local ellipticity provided
the solution enjoys some pointwise bounds on the associated hydrodynamic fields and the local entropy. Indeed, assuming certain uniform in $(Y, t)\in U_Y\times I$ bounds on local mass, energy, and entropy, see \cite{LN1,Mou},  one can prove that
           \begin{eqnarray*}\label{e-kolm-nd22}
           0<\Lambda^{-1} I\leq A(f)(X,Y,t)\leq \Lambda I,\quad |B(f)(X,Y,t)|\leq \Lambda,
       \end{eqnarray*}
       for some constant $\Lambda\geq 1$ and for $(X,Y,t)\in U_X\times U_Y\times I$, i.e., under these assumptions the operator $\mathcal{Q}$ in \eqref{e-kolm-ndc1} and in the Landau equation becomes locally uniformly elliptic. As a consequence, and as global well posedness for the Boltzmann
equation and the construction of solutions in the large is an outstanding open problem, the study of conditional regularity for the Boltzmann and Landau equations has become a way to make progress on the regularity issues for these equations. We refer to \cite{MR715658,d15, MR2765747,dvI, PLLCam,Mou, MR554086,review} for more on the connections between Kolmogorov-Fokker-Planck equations, the Boltzmann and Landau equation, statistical physics and conditional regularity.

Based on the idea of conditional regularity one is lead to study the local regularity of weak solutions to the equation in
\eqref{e-kolm-ndcol} with
    \begin{eqnarray}\label{e-kolm-ndflbon-ll}
              \mathcal{Q}(f,\nabla_X f,X,Y,t)=\nabla_X\cdot(A(X,Y,t)\nabla_Xf)+B(X,Y,t)\nabla_Xf,
    \end{eqnarray}
    assuming  that $A$ is measurable, bounded and uniformly elliptic, and that $B$ is bounded. In \cite{GIMV}, see also \cite{Gu, GI, JC21} for subsequent developments, the authors extended,  for equations as in \eqref{e-kolm-ndcol} assuming \eqref{e-kolm-ndflbon-ll}, the  De Giorgi-Nash-Moser (DGNM) theory, which in its original form only considers elliptic
or parabolic equations in divergence form, to hypoelliptic equations with rough coefficients including the one in \eqref{e-kolm-ndcol} assuming \eqref{e-kolm-ndflbon-ll}. \cite{GIMV} has spurred considerable activity in the field, see below for a literature review, as the results proved give the correct scale- and translation-invariant estimates for local H{\"o}lder continuity and the Harnack inequality for weak solutions.

In this paper we consider equations as in \eqref{e-kolm-ndcol} with
              \begin{eqnarray}\label{e-kolm-ndflbon}
              \mathcal{Q}(f,\nabla_X f,X,Y,t)=\nabla_X\cdot(A(\nabla_X f,X,Y,t)),
    \end{eqnarray}
    subject to conditions on $A$ which allow $A$ to be a nonlinear function of $\nabla_X f$. In this case we refer to the equations in  \eqref{e-kolm-ndcol} as nonlinear Kolmogorov-Fokker-Planck type equations with rough coefficients. Our contributions is twofold. First, we establish higher integrability (Theorem \ref{nthm1}) and local boundedness (Theorem \ref{locbd}) of weak sub-solutions, weak Harnack and Harnack
  inequalities (Theorem \ref{thm3}), and H{\"o}lder continuity with quantitative estimates (Theorem \ref{thm4}), for the equation
    \begin{equation}\label{maineqn}
(\partial_t+X\cdot\nabla_Y)u=\nabla_X\cdot(A(\nabla_X u,X,Y,t)).
\end{equation}
Second, we establish existence and uniqueness, in certain bounded $X$, $Y$ and $t$ dependent domains, for a Dirichlet problem
involving the equation in \eqref{maineqn}
also allowing for boundary data and a right hand side (Theorem \ref{weakdp1}).  In the linear case, if $A(X,Y,t)$ is a uniformly elliptic positive definite matrix with bounded measurable coefficients, then $A(\xi,X,Y,t)=A(X,Y,t)\xi$ satisfies the hypothesis we impose on the symbol $A$,  and in this case the equation in \eqref{maineqn} reduces to  the equation
\begin{equation}\label{pro2}
(\partial_t+X\cdot\nabla_Y)u=\nabla_X\cdot(A(X,Y,t)\nabla_X u).
\end{equation}
Concerning regularity, our results therefore generalize \cite{GIMV}, \cite{GI,JC21}, to nonlinear Kolmogorov-Fokker-Planck type equations with rough coefficients.

To the best of our knowledge, nonlinear equations of the form in \eqref{maineqn} have so far not been investigated in the literature, and the purpose of this paper is to contribute to the regularity and existence theory for these equations. We believe that generalizations of the  De Giorgi-Nash-Moser (DGNM) theory to nonlinear Kolmogorov-Fokker-Planck type equations with rough coefficients are relevant and interesting. We also believe that our treatment of the Dirichlet problem is new and enlightening.

 \subsection{The symbol $A$} We consider equations as in \eqref{maineqn} subject to conditions on $A$. Concerning the symbol $A$ our baseline assumption is that $A$ belongs to the class $M(\Lambda)$, where $\Lambda\in [1,\infty)$ is a constant. In our treatment of the Dirichlet problem we will need to impose stronger conditions on $A$ and we will assume that $A$ belongs to the class $R(\Lambda)$. In the following $\cdot$ denotes the standard Euclidean scalar product in $\mathbb R^m$.
 \begin{definition}\label{class1} Let $\Lambda\in [1,\infty)$. Then $A$ is said to belong to the class $M(\Lambda)$ if
 $A=A(\xi,X,Y,t):\R^m\times\R^m\times\R^m\times\R\to\R^m$ is continuous with respect to $\xi$,  measurable with respect to $X,Y$ and $t$, and
 \begin{align}\label{assump2}
(i)&\quad |A(\xi,X,Y,t)|\leq \Lambda|\xi|,\notag\\
(ii)&\quad A(\xi,X,Y,t)\cdot \xi\geq \Lambda^{-1}|\xi|^2,\notag\\
(iii)&\quad A(\lambda\xi,X,Y,t)=\lambda A(\xi,X,Y,t) \quad \forall \lambda\in\R\setminus\{0\},
\end{align}
for almost every $(X,Y,t)\in\R^{N+1}$ and for all $\xi\in\mathbb{R}^m$.
 \end{definition}

  \begin{definition}\label{class2} Let $\Lambda\in [1,\infty)$. Then $A$ is said to belong to the class $R(\Lambda)$ if
 $A=A(\xi,X,Y,t):\R^m\times\R^m\times\R^m\times\R\to\R^m$ is continuous with respect to $\xi$,  measurable with respect to $X,Y$ and $t$, and
 \begin{align}\label{assump1}
(i)&\quad |A(\xi_1,X,Y,t)-A(\xi_2,X,Y,t)|\leq \Lambda|\xi_1-\xi_2|,\notag\\
(ii)&\quad (A(\xi_1,X,Y,t)-A(\xi_2,X,Y,t))\cdot(\xi_1-\xi_2)\geq \Lambda^{-1}|\xi_1-\xi_2|^2,\notag\\
(iii)&\quad A(\lambda\xi,X,Y,t)=\lambda A(\xi,X,Y,t) \quad \forall \lambda\in\R\setminus\{0\},
\end{align}
for almost every $(X,Y,t)\in\R^{N+1}$ and for all $\xi_1,\,\xi_2,\,\xi\in\mathbb{R}^m$.
 \end{definition}

 \begin{remark}
 Note that \eqref{assump1}-$(iii)$  implies that $A(0,X,Y,t)=0$ for a.e. $(X,Y,t)\in\R^{N+1}$. Hence  we deduce from \eqref{assump1}-$(i),\,(ii)$ and $(iii)$ that
$R(\Lambda)\subset M(\Lambda)$.
\end{remark}

\subsection{Dilations and group law}
 We will often use the notation $(Z,t)=(X,Y,t)\in \R^{N+1}$ to denote points.  The natural family of dilations for our operators and equations, $(\delta_r)_{r>0}$, on $\R^{N+1}$,
is defined by
\begin{equation}\label{dil.alpha.i}
 \delta_r (X,Y,t) =(r X, r^3 Y,r^2 t),
\end{equation}
for $(X,Y,t) \in \R^{N +1}$,  $r>0$.  Our classes of operators  are closed under the  group law
\begin{equation}\label{e70}
 (\tilde Z,\tilde t)\circ (Z,t)=(\tilde X,\tilde Y,\tilde t)\circ (X, Y,t)=(\tilde X+X,\tilde Y+Y+t\tilde X,\tilde t+t),
\end{equation}
where $(Z,t),\ (\tilde Z,\tilde t)\in \R^{N+1}$. Note that
\begin{equation}\label{e70+}
(Z,t)^{-1}=(X,Y,t)^{-1}=(-X,-Y+tX,-t),
\end{equation}
and hence
\begin{equation}\label{e70++}
 (\tilde Z,\tilde t)^{-1}\circ (Z,t)=(\tilde  X,\tilde  Y,\tilde  t)^{-1}\circ (X,Y,t)=(X-\tilde  X,Y-\tilde  Y-(
t-\tilde  t)\tilde  X,t-\tilde  t),
\end{equation}
whenever $(Z,t),\ (\tilde Z,\tilde t)\in \R^{N+1}$. Given $(Z,t)=(X,Y,t)\in \R^{N+1}$ we let \begin{equation}\label{kolnormint}
  \|(Z, t)\|= \|(X,Y, t)\|:=|(X,Y)|\!+|t|^{\frac{1}{2}},\quad |(X,Y)|=\big|X\big|+\big|Y\big|^{1/3}.
\end{equation}
Given  ${r}>0$ and  $(\tilde Z,\tilde t)=(\tilde X,\tilde Y,\tilde t)\in\R^{N+1}$,  we let
\begin{align}\label{cyl1}
Q_{r}:=\{(X,Y,t):|X|<{r},|Y|<{r}^3,-{r}^2<t<0\},\quad Q_{r}(\tilde Z,\tilde t):=(\tilde Z,\tilde t)\circ Q_{r}.
\end{align}
We refer to $Q_{r}(\tilde Z,\tilde t)$ as a cylinder centered at $(\tilde Z,\tilde t)$ and of radius ${r}$.

\subsection{Statement of main results: regularity of weak solutions}   We here state the regularity part of our results, Theorem \ref{nthm1}-Theorem \ref{thm4}. These theorem
are derived under the assumption that the symbol $A$ belongs to the class $M(\Lambda)$ introduced in Definition \ref{class1}. For the notions of weak sub-solutions, super-solutions and solutions, we refer to Definition \ref{wksoldef} below. For the definitions of function spaces used we refer to the bulk of the paper.

\begin{theorem}[Higher integrability]\label{nthm1}
Let $(Z_0,t_0)=(X_0,Y_0,t_0)\in\R^{N+1},\,0<r_1<r_0\leq 1$ and let $u$ be a non-negative weak sub-solution  to \eqref{maineqn} in an open set of $\R^{N+1}$ containing $ Q_{r_0}(Z_0,t_0)$ in the sense of Definition \ref{wksoldef} below.
Then for any $q\in[2,2+{1}/{m})$ and $s\in[0, {1}/{3})$, we have\footnote{\,\,$W_Y^{s,1}$ denotes the fractional Sobolev space.}
\begin{equation}\label{hint2}
\|u\|_{L^q(Q_{r_1}(Z_0,t_0))}\leq c_1\Big(2+\frac{1}{m}-q\Big)^{-1}\|u\|_{L^2(Q_{r_0}(Z_0,t_0))},
\end{equation}

\begin{equation}\label{ngest}
\|u\|_{L_{t,X}^1 W_Y^{s,1}(Q_{r_1}(Z_0,t_0))}\leq c_2\Big(\frac{1}{3}-s\Big)^{-1}\|u\|_{L^2(Q_{r_0}(Z_0,t_0))}.
\end{equation}
Here
$$
c_1=\Big(1+\frac{1}{r_0-r_1}\Big)c,\quad c_2=r_0^{1+2m}\Big(1+\frac{1}{r_0-r_1}\Big)c,
$$
where
$$
c=c(m,\Lambda)\Big(1+\frac{1}{(r_0-r_1)^2}+\frac{|X_0|+r_0}{(r_0-r_1)r_1^{2}}+\frac{1}{(r_0-r_1)r_1}\Big),
$$
for some constant $c(m,\Lambda)\geq 1$.
\end{theorem}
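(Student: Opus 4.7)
The plan is to prove the two estimates by combining a Caccioppoli-type energy estimate with a transfer-of-regularity (velocity averaging) argument and a kinetic Sobolev embedding, following the strategy developed in \cite{GIMV} for the linear setting, and exploiting that the structural bounds in $M(\Lambda)$ give $|A(\nabla_X u,X,Y,t)|\le\Lambda|\nabla_X u|$ and $A(\nabla_X u,X,Y,t)\cdot\nabla_X u\ge\Lambda^{-1}|\nabla_X u|^2$. Thus the nonlinearity enters only via these $L^2$-type bounds, so that the arguments proceed essentially as in the linear case.

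\textbf{Step 1 (Caccioppoli).} First I would fix a smooth cut-off $\phi$ adapted to the Kolmogorov geometry with $\phi\equiv 1$ on $Q_{r_1}(Z_0,t_0)$, $\mathrm{spt}\,\phi\subset Q_{r_0}(Z_0,t_0)$, and
\[
|\nabla_X\phi|\lesssim\tfrac{1}{r_0-r_1},\quad |\nabla_Y\phi|\lesssim\tfrac{1}{(r_0-r_1)r_1^2},\quad |\partial_t\phi|\lesssim\tfrac{1}{(r_0-r_1)^2}.
\]
Testing the sub-solution inequality against $u\phi^2$ and using the ellipticity/growth bounds of $M(\Lambda)$, together with the identity
$(\partial_t+X\cdot\nabla_Y)(\phi^2 u^2/2)=\phi u^2(\partial_t+X\cdot\nabla_Y)\phi+\phi^2 u(\partial_t+X\cdot\nabla_Y)u$,
and handling the transport term via $|X|\le |X_0|+r_0$ on $Q_{r_0}(Z_0,t_0)$, I would obtain
\[
\|\nabla_X(\phi u)\|_{L^2}^2+\sup_t\|\phi u(\cdot,\cdot,t)\|_{L^2}^2\le c\,\|u\|_{L^2(Q_{r_0}(Z_0,t_0))}^2,
\]
with $c$ of the form displayed in the theorem. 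This explains the terms $1/(r_0-r_1)^2$, $(|X_0|+r_0)/((r_0-r_1)r_1^2)$ coming from the transport contribution.

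\textbf{Step 2 (Fractional regularity in $Y$).} Next I would set $v:=\phi u$, which solves, in the sense of distributions, an equation of the form
\[
(\partial_t+X\cdot\nabla_Y)v=\nabla_X\cdot G+g,
\]
where $G:=\phi A(\nabla_X u,X,Y,t)-u\,A(\nabla_X u,X,Y,t)\otimes\nabla_X\phi/|\nabla_X\phi|$ type terms (bounded pointwise by $\Lambda(|\nabla_X(\phi u)|+|u\nabla_X\phi|)$) and $g$ collects the lower-order terms from differentiating $\phi$, all controlled in $L^2$ by Step 1. Since the right-hand side is an $H^{-1}_X$ function with $L^2(\R^{N+1})$ bound, the transfer of regularity lemma for kinetic equations (see the kinetic averaging results in \cite{GIMV}, essentially Bouchut's lemma adapted to the Kolmogorov scaling $Y\sim X^3$) yields, for every $s<1/3$,
\[
\|v\|_{L^1_{t,X}W^{s,1}_Y(\R^{N+1})}\le \frac{C}{1/3-s}\,\|u\|_{L^2(Q_{r_0}(Z_0,t_0))},
\]
with the dependence on $\phi$ producing the factor $r_0^{1+2m}(1+1/(r_0-r_1))$ in $c_2$ after passing from the $L^2_{t,X,Y}$ bounds to the $L^1_{t,X}$ bound via H\"older's inequality on the support of $\phi$. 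This proves \eqref{ngest}.

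\textbf{Step 3 (Gain of integrability).} For \eqref{hint2} I would combine the control of $\nabla_X v$ in $L^2$ with the $W^{s,1}_Y$-regularity just obtained, and interpolate with the bound on $v$ in $L^\infty_t L^2_{X,Y}$. A kinetic-type Sobolev embedding (again in the spirit of \cite{GIMV}), using the homogeneous dimension $4m+2$ associated with $\delta_r$, upgrades this to $v\in L^q(\R^{N+1})$ for every $q<2+1/m$, with a constant blowing up like $(2+1/m-q)^{-1}$ and an extra factor $1/((r_0-r_1)r_1)$ arising from the interpolation. Since $v=u$ on $Q_{r_1}(Z_0,t_0)$, this yields \eqref{hint2}.

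\textbf{Main obstacle.} Steps 1 and 3 are rather mechanical once the geometry is set up; the real work is Step 2, where one must verify that the transfer-of-regularity / velocity averaging lemma can be applied in the present nonlinear setting. The point is that the lemma only sees the $L^2$-norm of the right-hand side, which is controlled by the Caccioppoli estimate thanks to the linear $M(\Lambda)$-growth of $A$; hence, although the equation is nonlinear, the averaging argument goes through unchanged. Bookkeeping the precise dependence of constants on $r_0,r_1,|X_0|$ to recover exactly $c_1$ and $c_2$ is then a careful but routine exercise.
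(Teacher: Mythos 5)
There is a genuine gap in Step 2: you treat $v=\phi u$ as if it \emph{solved} an equation of the form $(\partial_t+X\cdot\nabla_Y)v=\nabla_X\cdot G+g$, but $u$ is only a weak \emph{sub}-solution. The weak formulation of the sub-solution inequality induces a non-negative defect measure $\bar\mu$, so that the correct identity is
\begin{equation*}
(\partial_t+X\cdot\nabla_Y)(\phi u)=\nabla_X\cdot F_1+F_2-\mu,\qquad \mu=\bar\mu\,\phi\ge 0,
\end{equation*}
and any averaging or representation lemma you invoke must accommodate this measure. This is not a cosmetic point. For the $L^q$ gain \eqref{hint2} one can get away with it, because the lemma the paper actually uses (Lemma \ref{flem}, i.e.\ Lemma~10 of \cite{JC21}, based on the fundamental solution of $(\partial_t+X\cdot\nabla_Y-\Delta_X)$ after moving $\Delta_X v$ to the left-hand side) bounds the $L^p$ norm of a non-negative $f$ without seeing $\mu$ — this is precisely where the hypothesis that $u$ is \emph{non-negative} is used, and your proposal never uses it. But for the fractional estimate \eqref{ngest} the total mass $\|\mu\|_{M^1}$ appears explicitly on the right-hand side, and the paper needs an additional step — testing the equation with a second cut-off $\phi_2$ and running the energy argument again — to bound $\|\mu\|_{M^1(Q_{r_2})}\le r_0^{1+2m}c_1\|u\|_{L^2(Q_{r_0})}$; this is also where the factor $r_0^{1+2m}$ in $c_2$ comes from. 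Your argument omits this entirely, so \eqref{ngest} does not follow from what you wrote.

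A second, related issue: you propose to apply Bouchut's transfer-of-regularity lemma directly to $v=\phi u$. The paper explicitly flags that Bouchut's result requires a genuine (global-in-time) solution; making that route work for a sub-solution would force you to dominate $u$ by an actual solution via the comparison principle (Theorem \ref{compa}), which in turn needs the existence theory of Theorem \ref{weakdp1} and only yields the result on centered cylinders. The paper avoids all of this by rewriting the equation as a Kolmogorov heat equation with source $\nabla_X\cdot F_1+F_2-\mu$ and using the explicit fundamental solution, which delivers both \eqref{flem1} and \eqref{flem2} at once (including the $(2+1/m-q)^{-1}$ and $(1/3-s)^{-1}$ blow-up rates), so no separate interpolation or kinetic Sobolev embedding step is needed. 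Your Steps 1 and the identification of the $L^2$ bounds on $F_1,F_2$ via the linear growth of $A$ are correct and match the paper; the missing ideas are the defect measure, its mass bound, and the positivity of $u$.
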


\begin{theorem}[Local boundedness]\label{locbd}
Let $(Z_0,t_0)=(X_0,Y_0,t_0)\in\R^{N+1},\,0<r_{\infty}<r_0\leq 1$ and let $u$ be  a non-negative weak sub-solution  to \eqref{maineqn} in an open set of $\R^{N+1}$ containing $ Q_{r_0}(Z_0,t_0)$ in the sense of Definition \ref{wksoldef} below. Then for any $p>0$, there exists a constant $c=c(m,\Lambda)\geq 1$ and $\theta=\theta(m)>1$ such that
\begin{equation}\label{lest}
\sup_{Q_{r_\infty}(Z_0,t_0)}u\leq c\Big(\frac{1+|X_0|}{r_{\infty}^2 (r_0-r_{\infty})^3}\Big)^\frac{\theta}{p}\|u\|_{L^p(Q_{r_0}(Z_0,t_0))}.
\end{equation}
\end{theorem}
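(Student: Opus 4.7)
The plan is a Moser iteration built on coupling a Caccioppoli-type energy inequality for powers of $u$ with the higher-integrability gain already provided by Theorem \ref{nthm1}. As a preliminary I would translate to $(Z_0,t_0)=0$ via the group law \eqref{e70}: the function $\tilde u(Z,t)=u((Z_0,t_0)\circ(Z,t))$ satisfies an equation of the form $(\partial_t+X\cdot\nabla_Y)\tilde u=\nabla_X\cdot\tilde A(\nabla_X\tilde u,Z,t)-X_0\cdot\nabla_Y\tilde u$ on $Q_{r_0}$, with $\tilde A\in M(\Lambda)$. The extra first-order drift $-X_0\cdot\nabla_Y\tilde u$ generated by the translation is precisely the source of the factor $1+|X_0|$ in the estimate.

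The workhorse is a Caccioppoli inequality on nested cylinders. For $q\geq 1$, setting $v=u^q$ and testing the weak sub-solution formulation against $\eta^2 u^{2q-1}$, where $\eta$ is a smooth cut-off supported in $Q_r$ with $\eta\equiv 1$ on $Q_{r'}$, and exploiting $(i)$--$(iii)$ of Definition \ref{class1} (the homogeneity $(iii)$ is essential so that the structural bounds are preserved under the power substitution), one obtains a bound of the form
\begin{equation*}
\sup_{t}\int \eta^2 v^2\,dX\,dY+\int_{Q_{r'}}|\nabla_X v|^2\,dZ\,dt\;\leq\; c\,q^{2}\Bigl(\tfrac{1}{(r-r')^2}+\tfrac{1+|X_0|}{(r-r')^3}\Bigr)\int_{Q_r} v^2\,dZ\,dt.
\end{equation*}
The anisotropic coefficient $(1+|X_0|)/(r-r')^3$ originates from integrating by parts in the transport terms $(X-X_0)\cdot\nabla_Y$ against $\eta^2 v^2$, and its scaling is consistent with the anisotropic dilations \eqref{dil.alpha.i}. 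Theorem \ref{nthm1} applied to $v$ (more precisely, to a truncation $\min(u,k)^q$ so as to remain a sub-solution) then upgrades this into a reverse H\"older inequality
\begin{equation*}
\|u\|_{L^{2q\sigma}(Q_{r'})}^{2q}\;\leq\; C(q,r,r',X_0)\,\|u\|_{L^{2q}(Q_{r})}^{2q}, \qquad \sigma:=1+\tfrac{1}{2m}>1,
\end{equation*}
with explicit polynomial dependence of $C$ on $(r-r')^{-1}$, $q$, and $1+|X_0|$.

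With the gain factor $\sigma>1$ in hand I would choose $q_k=\sigma^k$ and $r_k=r_\infty+2^{-k}(r_0-r_\infty)$, iterate the reverse H\"older inequality along this dyadic sequence, and pass to the limit $k\to\infty$: the telescoping product of constants converges because $\sum_k \sigma^{-k}\log C_k<\infty$, yielding $\sup_{Q_{r_\infty}}u\leq c\bigl(\tfrac{1+|X_0|}{r_\infty^2(r_0-r_\infty)^3}\bigr)^{\theta/2}\|u\|_{L^2(Q_{r_0})}$ for an exponent $\theta=\theta(m)$ that emerges as the geometric sum of the scaling factors in the iteration. Finally, to reach arbitrary $p>0$, I would run this $L^\infty$--$L^2$ estimate on a one-parameter family of intermediate cylinders $Q_{r_\infty+\lambda(r_0-r_\infty)}$, combine it with the elementary interpolation $\|u\|_{L^2}^{2}\leq\|u\|_{L^\infty}^{2-\min(p,2)}\|u\|_{L^p}^{\min(p,2)}$, and absorb the resulting sup-norm using Young's inequality together with the standard Giaquinta--Giusti iteration lemma. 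The main obstacle is bookkeeping: the anisotropic scaling $(r,r^3,r^2)$ of $Q_r$ and the $|X_0|$-contribution from the translated drift must be tracked uniformly through every iteration level so that the final exponent $\theta$ depends only on $m$ and the coefficient of $\|u\|_{L^p(Q_{r_0})}$ emerges in the precise form $\bigl((1+|X_0|)/(r_\infty^2(r_0-r_\infty)^3)\bigr)^{\theta/p}$ stated in \eqref{lest}.
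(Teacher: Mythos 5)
Your proposal reaches the theorem by a genuinely different route from the paper. The paper runs a De Giorgi level-set iteration: with $k_n\uparrow \tfrac12$, $u_n=(u-k_n)^+$ and $A_n=\sup_t\iint u_n^2$ on shrinking cylinders, it combines Lemma \ref{sub} (truncations are sub-solutions), the energy estimate of Lemma \ref{eng1}, the higher integrability of Theorem \ref{nthm1} with exponent $q=2+\tfrac1{2m}$, and a Chebyshev bound on $|\{u_n>0\}\cap Q_{r_{n-1}}|$ to get the superlinear recursion $A_n\leq \beta^nA_{n-2}^{\alpha}$, $\alpha=2-\tfrac2q>1$, and then concludes by the fast-convergence lemma after normalizing $u$; the case of general $p>0$ is handled exactly as you propose, by interpolation and the iteration lemma on intermediate cylinders, following \cite[Proposition 12]{JC21}. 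Your Moser-type power iteration is a viable alternative with the same two inputs (Caccioppoli plus Theorem \ref{nthm1} as the source of the integrability gain), and the observation that makes it work is precisely the one you flag: by the degree-one homogeneity \eqref{assump2}-$(iii)$ one has $A(\nabla_X u^q,\cdot)=qu^{q-1}A(\nabla_X u,\cdot)$, so for $q\geq 1$ the power $u^q$ of a non-negative sub-solution is again a sub-solution of \emph{the same} equation, and Theorem \ref{nthm1} applies to it directly. The De Giorgi route avoids having to justify the chain rule for unbounded powers; your route avoids the level-set bookkeeping and gives the reverse H\"older inequality in one stroke.

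Two corrections are needed. First, the left translation $\tilde u(Z,t)=u((Z_0,t_0)\circ(Z,t))$ does \emph{not} generate an extra drift $-X_0\cdot\nabla_Y\tilde u$: a direct computation from \eqref{e70} gives $(\partial_t+X\cdot\nabla_Y)\tilde u(Z,t)=\bigl((\partial_t+X'\cdot\nabla_Y)u\bigr)\bigl((Z_0,t_0)\circ(Z,t)\bigr)$, i.e.\ the transport operator is left-invariant and the translated function solves an equation of exactly the same form with a symbol still in $M(\Lambda)$; this is what the paper means by the class being closed under the group law, and it is why the reduction to $(Z_0,t_0)=0$ is legitimate. (The $|X_0|$ in Lemma \ref{eng1} enters only through the size of $|X|$ on an untranslated cylinder, via $X\cdot\nabla_Y\psi^2$.) Your spurious drift term would not break the Caccioppoli estimate, but the bookkeeping you build on it is based on a wrong premise. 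Second, $\min(u,k)^q$ is not a sub-solution: the minimum of a sub-solution with a constant has the wrong sign, and $s\mapsto\min(s,k)^q$ is not convex. The standard repair is to truncate the power function itself, i.e.\ use a convex, increasing, $C^1$ function $H_k$ with $H_k(s)=s^q$ for $s\leq k$ and $H_k$ affine for $s>k$; then $H_k(u)$ is an admissible sub-solution and one lets $k\to\infty$ by monotone convergence. With these repairs, and taking the gain strictly below the endpoint (e.g.\ $\sigma=1+\tfrac1{4m}$, since $q=2+\tfrac1m$ is excluded in Theorem \ref{nthm1}), your argument goes through.
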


\begin{theorem}[Harnack inequalities]\label{thm3} Let $u$ be a non-negative weak super-solution to \eqref{maineqn} in an open set of $\R^{N+1}$ containing $ Q_1$ in the sense of Definition \ref{wksoldef} below. Then there exists $\zeta>0$ and $c\geq 1$, both depending only on
$m$ and $\Lambda$ such that
  \begin{equation}
    \label{eq:w-Harnack-stat}
    \left( \iiint_{\tilde Q_{{r_0}/{2}} ^-} u^\zeta (X,Y,t) \d X\d Y\d t
    \right)^{{1}/{\zeta}}
    \leq c\inf_{ Q_{{r_0}/{2}}}u,
  \end{equation}
  where $r_0={1}/{20}$ and
  $\tilde Q_{{r_0}/{2}}^{-}:=Q_{{r_0}/{2}}(0,0,-{19}r_0^2/8)$. Furthermore, if $u$ is  a non-negative weak solution  to \eqref{maineqn} in an open set of $\R^{N+1}$ containing $ Q_1$, then
  \begin{equation}
    \label{eq:s-Harnack-stat}
    \sup_{\tilde Q_{{r_0}/{4}} ^{-}} u \leq c \inf_{Q_{{r_0}/{4}}} u,
  \end{equation}
  where
  $\tilde Q_{{r_0}/{4}}^{-}:=Q_{{r_0}/{4}}(0,0,-{19}r_0^2/8)$.
  \end{theorem}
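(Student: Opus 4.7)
The plan is to establish the weak Harnack \eqref{eq:w-Harnack-stat} first and then deduce the full Harnack \eqref{eq:s-Harnack-stat} by combining it with the local boundedness estimate of Theorem \ref{locbd}. The overall strategy follows the De Giorgi-Nash-Moser scheme for hypoelliptic equations, as in \cite{GIMV,GI,JC21}, extended to the nonlinear symbol $A\in M(\Lambda)$ by relying on the coercivity and $1$-homogeneity encoded in \eqref{assump2}(ii)-(iii) in place of the linear structure \eqref{e-kolm-ndflbon-ll}.

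For the weak Harnack I would proceed in three steps. First, a Caccioppoli estimate for the test function $\varphi^{2}u^{-1-q}$ ($q>0$), derived from the super-solution inequality together with \eqref{assump2}(ii), combined with the higher integrability provided by Theorem \ref{nthm1} (applied to a suitable power of $u$), yields a sequence of reverse-H\"older bounds for $u^{-1}$ on shrinking cylinders; Moser iteration then produces
\[
\inf_{Q_{r_0/2}}u\;\geq\;c^{-1}\Bigl(\iiint_{Q^{\sharp}}u^{-p_{0}}\Bigr)^{-1/p_{0}}
\]
for some $p_{0}>0$ and an intermediate cylinder $Q^{\sharp}$. Second, testing against $\varphi^{2}/u$ yields a logarithmic energy estimate placing $\log u$ in a kinetic BMO-type class on $Q_{1}$, and a John-Nirenberg argument in this class bridges positive and negative Moser exponents, so that $\|u\|_{L^{\zeta}(Q^{\sharp}_{+})}\,\|u^{-1}\|_{L^{\zeta}(Q^{\sharp}_{-})}\leq c$ for some $\zeta\in(0,p_{0}]$ and some pair of cylinders $Q^{\sharp}_{\pm}$. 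Third, one transfers the $L^{\zeta}$-norm of $u$ from $Q^{\sharp}_{+}$ onto $\tilde Q_{r_{0}/2}^{-}$ and the inverse norm from $Q^{\sharp}_{-}$ onto a cylinder containing $Q_{r_{0}/2}$ via propagation of positivity along the characteristics of $\partial_{t}+X\cdot\nabla_{Y}$.

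This final transfer is the main obstacle and is the genuinely hypoelliptic ingredient. Since no $Y$-diffusion is present and $Y$-regularization appears only indirectly through the commutator of $\nabla_{X}$ with the drift, positivity cannot spread instantaneously in $Y$; information at time $t$ about a level set $\{u\geq\lambda\}$ has to be transported along the integral curves $s\mapsto(X,Y+(s-t)X,s)$, and the admissible time offset between a cylinder on which positivity is known in measure and one on which pointwise positivity can be deduced scales like the square of the $X$-radius. This is precisely what forces the shift $-19r_{0}^{2}/8$ defining $\tilde Q_{r_{0}/2}^{-}$. I would realise the transfer by a finite chain of shifted Kolmogorov cylinders together with an intermediate-value De Giorgi lemma as in \cite{GIMV}, checking at each step that the Caccioppoli and Poincar\'e-type estimates survive the replacement of a linear operator by $A\in M(\Lambda)$; the key structural point is that coercivity and $1$-homogeneity of $A$ are enough for all energy estimates, so that the iteration closes with constants depending only on $m$ and $\Lambda$.

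Given \eqref{eq:w-Harnack-stat}, the full Harnack is a bookkeeping step: since a non-negative weak solution is in particular a non-negative weak sub-solution, Theorem \ref{locbd} applied with centre $(0,0,-19r_{0}^{2}/8)$, outer radius $r_{0}/2$, inner radius $r_{\infty}=r_{0}/4$ and exponent $p=\zeta$ yields
\[
\sup_{\tilde Q_{r_{0}/4}^{-}}u\;\leq\;c\,\Bigl(\iiint_{\tilde Q_{r_{0}/2}^{-}}u^{\zeta}\Bigr)^{1/\zeta}.
\]
Combining this with \eqref{eq:w-Harnack-stat} and the trivial inclusion $Q_{r_{0}/4}\subset Q_{r_{0}/2}$, which gives $\inf_{Q_{r_{0}/2}}u\leq \inf_{Q_{r_{0}/4}}u$, produces \eqref{eq:s-Harnack-stat} with the same dependence of the constant on $m$ and $\Lambda$.
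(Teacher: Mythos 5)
Your deduction of the strong Harnack \eqref{eq:s-Harnack-stat} from the weak Harnack plus Theorem \ref{locbd} is exactly what the paper does and is correct as stated. For the weak Harnack itself, however, you take a genuinely different route from the paper. You propose the Moser-type scheme: Caccioppoli estimates for negative powers $u^{-1-q}$, a logarithmic energy estimate placing $\log u$ in a kinetic BMO class, and a John--Nirenberg lemma to bridge the positive and negative integrability exponents. This is the strategy of Guerand--Imbert \cite{GI}. The paper instead follows the De Giorgi route of Guerand--Mouhot \cite{JC21}: it feeds Remark \ref{wksolrmk}, Theorem \ref{locbd} and the measure-to-pointwise Lemma \ref{mtp} (itself built on the weak Poincar\'e inequality of Lemma \ref{wp} and the intermediate-value Lemma \ref{ivp}) into the stacked-cylinder covering argument of \cite[Theorem 5]{JC21}, which produces \eqref{eq:w-Harnack-stat} directly without any log-transform or John--Nirenberg input. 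The paper's route has the advantage that all the required nonlinear adaptations are already done in Section \ref{sec3}; your route would require redoing the Guerand--Imbert machinery for the nonlinear symbol.

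The one point in your sketch that I would not accept as routine is the second step. In the kinetic setting the John--Nirenberg-type bridging of positive and negative Moser exponents is a deep result: there is no $Y$-diffusion, so the BMO class in question must encode the hypoelliptic gain in $Y$ through velocity averaging, and establishing the corresponding covering and oscillation lemmas was the main content of \cite{GI} even for linear equations. You assert this step in one sentence, but none of the lemmas available in the paper (nor Theorem \ref{nthm1}, which gives integrability of sub-solutions, not exponential integrability of $\log u$) delivers it, and its extension to $A\in M(\Lambda)$ would need to be carried out from scratch. By contrast, your first step (Moser iteration on negative powers) and your third step (propagation of positivity along characteristics, which is precisely what Lemmas \ref{ivp} and \ref{mtp} encode) are sound, and the homogeneity \eqref{assump2}-$(iii)$ does indeed make $u^{-q}$ a weak sub-solution when $u$ is a positive super-solution, so the energy estimates close as you claim. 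If you want a complete proof with the tools the paper actually provides, replace your second step by Lemma \ref{mtp} together with the covering argument of \cite[Theorem 5]{JC21}.
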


\begin{theorem}[H\"older continuity]\label{thm4} Let  $u$ be a weak solution  to \eqref{maineqn} in an open set of $\R^{N+1}$ containing $ Q_2$ in the sense of Definition \ref{wksoldef} below. Then there exists $\alpha\in(0,1)$ and $c\geq 1$, both depending only on $m$ and $\Lambda$ such that
\begin{equation}\label{hest}
\frac{|u(X_1,Y_1,t_1))-u(X_2,Y_2,t_2)|}{\|(X_2,Y_2,t_2)^{-1}\circ(X_1,Y_1,t_1)\|^\alpha}\leq c\|u\|_{L^2(Q_2)},
\end{equation}
whenever $(X_1,Y_1,t_1),(X_2,Y_2,t_2)\in Q_1,\,(X_1,Y_1,t_1)\neq (X_2,Y_2,t_2)$.
\end{theorem}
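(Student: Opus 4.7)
The plan is to derive Theorem \ref{thm4} from the weak Harnack inequality \eqref{eq:w-Harnack-stat} via the classical oscillation decay mechanism, adapted to the Kolmogorov group structure.

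First, I would establish two invariance properties for the class $M(\Lambda)$: for any $(Z_*,t_*) \in \R^{N+1}$ and any $r > 0$, if $u$ is a weak solution of \eqref{maineqn} with symbol $A \in M(\Lambda)$, then $v(Z,t) := u((Z_*,t_*) \circ \delta_r(Z,t))$ is a weak solution of an equation of the same form whose symbol still lies in $M(\Lambda)$ with the same constant $\Lambda$. The translation invariance holds because the transport operator $\partial_t + X \cdot \nabla_Y$ is left-invariant under the group law \eqref{e70}, and the dilation invariance is a direct computation using the homogeneity property (iii) of Definition \ref{class1}. As a consequence of (iii) applied with $\lambda = -1$, one also gets $A(-\xi,\cdot) = -A(\xi,\cdot)$, so constants solve \eqref{maineqn} and, whenever $u$ is a weak solution, so are $M - u$ and $u - m$, both non-negative when $M \geq \sup u$ and $m \leq \inf u$.

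The core step is oscillation decay at a fixed point, which after translation I take to be the origin. Fix $r_0 = 1/20$ as in Theorem \ref{thm3}, set $\lambda := r_0/4$, and let $\rho_k := \lambda^k$. For each $k \geq 0$ let $S_k$ be the $\delta_{\rho_k}$-image of $\tilde Q_{r_0/2}^- \cup Q_{r_0/2}$, and set $M_k := \sup_{S_k} u$, $m_k := \inf_{S_k} u$, $\omega_k := M_k - m_k$. At least one of the sets $\{u \leq (M_k + m_k)/2\}$ or $\{u \geq (M_k + m_k)/2\}$ occupies half of the past cylinder in $S_k$; in the first case $M_k - u$ is a non-negative weak solution of the rescaled equation which is bounded below by $\omega_k/2$ on a set of relative measure $\tfrac{1}{2}$. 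After rescaling to unit cylinders via the invariance from the previous paragraph, the weak Harnack inequality \eqref{eq:w-Harnack-stat} produces $\inf (M_k - u) \geq \theta \omega_k$ on the forward cylinder, for some $\theta = \theta(m,\Lambda) \in (0,1)$, the second case being symmetric. This yields $\omega_{k+1} \leq (1-\theta)\omega_k$, hence $\omega_k \leq (1-\theta)^k \omega_0$; interpolating by sandwiching any small $r$ between $\rho_{k+1}$ and $\rho_k$ and setting $\alpha := \log(1-\theta)/\log \lambda \in (0,1)$ gives $\osc_{Q_r} u \leq c\, r^\alpha\, \|u\|_{L^\infty(S_0)}$.

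To upgrade this pointwise decay to the two-point estimate \eqref{hest} on $Q_1$, given $(Z_1,t_1),(Z_2,t_2) \in Q_1$ with $d := \|(Z_2,t_2)^{-1} \circ (Z_1,t_1)\|$, I would use the group law to produce a common base point $(Z_*,t_*)$, slightly forward in time, such that both points lie inside a single cylinder $Q_{Cd}(Z_*,t_*)$ with $C$ absolute. Translating $(Z_*,t_*)$ to the origin and applying the previous step then bounds $|u(Z_1,t_1) - u(Z_2,t_2)|$ by $c\, d^\alpha \|u\|_{L^\infty(Q_{3/2})}$, and Theorem \ref{locbd} with $p=2$ and suitable radii finally controls $\|u\|_{L^\infty(Q_{3/2})}$ by $c\|u\|_{L^2(Q_2)}$. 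The main technical obstacle is precisely this last geometric passage: the non-commutativity of the group law means that two points at distance $d$ are generically \emph{not} contained in any cylinder centered at either of them, so a small geometric lemma is needed to guarantee a common enclosing cylinder of comparable radius, together with a check that all translated/dilated cylinders appearing in the iteration and in the Harnack-chain argument remain inside the reference set $Q_2$ where $u$ is known to solve the equation.
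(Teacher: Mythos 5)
Your route is sound in outline but it is not the one the paper takes. The paper's proof of Theorem \ref{thm4} does not pass through the weak Harnack inequality at all: it invokes Remark \ref{wksolrmk} and the measure-to-pointwise Lemma \ref{mtp} and then follows \cite[Theorem 7]{JC21}, i.e.\ the De Giorgi-style oscillation reduction. The two mechanisms are close cousins — Lemma \ref{mtp} is itself the engine behind Theorem \ref{thm3} — but they are packaged differently, and the difference matters for the bookkeeping. Lemma \ref{mtp} only asks for the one-sided bound $u\leq 1$ on $Q_{1/2}$ together with the sub-solution property on $Q_1$ and the measure condition on $Q_{r_0}^-$; it then yields $u\leq 1-\gamma$ on $Q_{r_0/2}$. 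Normalizing $u$ by its oscillation over (the rescaled image of) $Q_{1/2}$ and applying the lemma to $\pm u$ gives the dyadic decay directly, with no non-negativity hypothesis on a larger set. Your scaling/translation computations, the use of homogeneity (iii) to get dilation invariance of $M(\Lambda)$ and to pass from $u$ to $M-u$, the interpolation to $r^\alpha$, the two-point geometric reduction with a slightly forward base point, and the final appeal to Theorem \ref{locbd} to replace $\|u\|_{L^\infty}$ by $\|u\|_{L^2(Q_2)}$ all match what the cited argument does.

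The one genuine gap is in your iteration as written. You set $M_k:=\sup_{S_k}u$ with $S_k=\delta_{\rho_k}\bigl(\tilde Q_{r_0/2}^-\cup Q_{r_0/2}\bigr)$ and then apply \eqref{eq:w-Harnack-stat} to $M_k-u$. But Theorem \ref{thm3} requires the super-solution to be non-negative on an open set containing the \emph{whole} unit cylinder $Q_1$, i.e.\ after rescaling, on $\delta_{\rho_k}(Q_1)$, which is strictly larger than $S_k$; with your definition $M_k-u$ may be negative on $\delta_{\rho_k}(Q_1)\setminus S_k$ and the hypothesis of the weak Harnack inequality fails. The fix is standard: take $M_k$ and $m_k$ as the extrema over the full nested cylinders $\delta_{\rho_k}(Q_1)$ (so that $M_k-u\geq 0$ and $u-m_k\geq 0$ wherever the equation is used), check the measure dichotomy on $\delta_{\rho_k}(\tilde Q_{r_0/2}^-)$, and verify the inclusion $\delta_{\rho_{k+1}}(Q_1)\subset\delta_{\rho_k}(Q_{r_0/2})$, which your choice $\lambda=r_0/4$ does satisfy. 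With that correction (or by switching to Lemma \ref{mtp}, which is engineered precisely to avoid this mismatch), your argument goes through.
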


\subsection{Statement of main results: existence and uniqueness for a Dirichlet problem} We here state the existence and uniqueness part of our results, Theorem \ref{weakdp1}. Throughout the paper we let $U_X\subset\mathbb R^m$ be a bounded Lipschitz domain and let $V_{Y,t}\subset \mathbb R^{m}\times \mathbb R$ be a bounded domain with boundary which is $C^{1,1}$-smooth, {i.e. $C^{1}$ with respect to $Y$ as well as $t$.} Let $N_{Y,t}$ denote the outer unit normal to $V_{Y,t}$. We establish existence and uniqueness of  weak solutions
to a formulation of the Dirichlet problem
  \begin{equation}\label{dpweak+}
\begin{cases}
	\nabla_X\cdot(A(\nabla_X u,X,Y,t))-(\partial_t+X\cdot\nabla_Y)u = g^*  &\text{in} \ U_X\times V_{Y,t}, \\
      u = g  & \text{on} \ \partial_\K(U_X\times V_{Y,t}).
\end{cases}
\end{equation}
 Here
    \begin{eqnarray}\label{Kolb}
\quad\partial_{\K}(U_X\times V_{Y,t}):=(\partial U_X\times V_{Y,t})\cup\{(X,Y,t)\in \overline{U_X}\times \partial V_{Y,t}\mid (X,1)\cdot N_{Y,t}<0\}.
    \end{eqnarray}
    $\partial_\K(U_X\times V_{Y,t})$ will be referred to as the Kolmogorov boundary of $U_X\times V_{Y,t}$, and the Kolmogorov boundary serves, in our context, as the natural substitute for the parabolic boundary used in the context of the Cauchy-Dirichlet problem for uniformly elliptic parabolic equations.
In particular, we study weak solutions in the sense of Definition \ref{weaks}. For the definition of the functional setting we refer to Section \ref{sec2}. We believe that the following result is of independent interest in particularly as we allow the symbol $A$ to depend nonlinearly on $\nabla_Xu$.
\begin{theorem}[Existence and uniqueness]\label{weakdp1}  Let $(g,g^*)\in W(U_X\times V_{Y,t})\times L_{Y,t}^2(V_{Y,t},{H}_X^{-1}(U_X))$ and assume that $A$ belongs to the class $R(\Lambda)$ introduced in Definition \ref{class2}. Then there exists a unique weak solution $u$ to the problem in
\eqref{dpweak+} in the sense of Definition \ref{weaks} below. Furthermore, there exists a constant $c$, depending only on $m$, $\Lambda$ and $U_X\times V_{Y,t}$, such that
    \begin{equation}\label{exest}
    \begin{split}
    ||u||_{W(U_X\times V_{Y,t})}&\leq c\bigl (||g||_{W(U_X\times V_{Y,t})}+||g^*||_{L_{Y,t}^2(V_{Y,t},{H}_X^{-1}(U_X))}\bigr ).
    \end{split}
    \end{equation}
    \end{theorem}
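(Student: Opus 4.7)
The plan is to reduce the problem to one with zero Kolmogorov boundary data and then apply the theory of maximal monotone operators. Pick any extension of the boundary datum, which we may still denote by $g\in W(U_X\times V_{Y,t})$, and set $v:=u-g$. Then $u$ is a weak solution of \eqref{dpweak+} if and only if $v$ is a weak solution of
\[
\nabla_X\cdot A(\nabla_X v+\nabla_X g,X,Y,t)-(\partial_t+X\cdot\nabla_Y)v=\tilde g^*\quad\text{in }U_X\times V_{Y,t},
\]
with $v=0$ on $\partial_\K(U_X\times V_{Y,t})$, where $\tilde g^*:=g^*+(\partial_t+X\cdot\nabla_Y)g$ still lies in $L_{Y,t}^2(V_{Y,t},{H}_X^{-1}(U_X))$. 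Hence it suffices to solve the Dirichlet problem with zero Kolmogorov boundary data.

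Let $\mathcal V$ denote the closed subspace of $W(U_X\times V_{Y,t})$ consisting of functions that vanish on $\partial_\K$, and let $\mathcal V^*$ denote its dual. Define the transport operator $T:\mathcal V\to \mathcal V^*$ by $Tv:=(\partial_t+X\cdot\nabla_Y)v$ and the nonlinear operator $\mathcal A:\mathcal V\to \mathcal V^*$ by
\[
\langle \mathcal A(v),\varphi\rangle:=\iiint_{U_X\times V_{Y,t}} A(\nabla_X v+\nabla_X g,X,Y,t)\cdot\nabla_X\varphi\d X\d Y\d t,\quad \varphi\in\mathcal V.
\]
By the hypotheses defining the class $R(\Lambda)$, the operator $\mathcal A$ is bounded, Lipschitz, hemicontinuous and strongly monotone from $\mathcal V$ to $\mathcal V^*$, and in particular coercive up to the fixed forcing produced by $g$. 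The problem becomes: find $v\in\mathcal V$ solving $Tv+\mathcal A(v)=-\tilde g^*$ in $\mathcal V^*$.

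The core of the argument is to show that $T$ is a densely defined linear maximal monotone operator on $\mathcal V$. Monotonicity follows from the Green-type identity
\[
2\iiint_{U_X\times V_{Y,t}} v\,Tv\d X\d Y\d t=\iint_{\partial(U_X\times V_{Y,t})} v^2\,(X,1)\cdot N_{Y,t}\d\sigma,
\]
together with \eqref{Kolb}: on the complement of $\partial_\K$ the characteristic weight $(X,1)\cdot N_{Y,t}$ is non-negative, while $v$ vanishes on $\partial_\K$. The principal technical obstacle will be to give this boundary identity a rigorous meaning, since the Kolmogorov boundary is characteristic exactly where $(X,1)\cdot N_{Y,t}=0$; a weighted trace theorem for elements of $\mathcal V$ in the spirit of Friedrichs symmetric positive systems is therefore needed. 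Once this is available, maximality of $T$, i.e.\ surjectivity of $T+I:\mathcal V\to \mathcal V^*$, is obtained by solving the linear auxiliary problem $Tw+w=F$ via a Galerkin approximation in the $X$-variable or, equivalently, by a vanishing viscosity argument in the $(Y,t)$-variables.

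With $T$ maximal monotone and $\mathcal A$ bounded, hemicontinuous, coercive and strongly monotone, a standard perturbation theorem of Browder--Minty type yields a unique $v\in\mathcal V$ with $Tv+\mathcal A(v)=-\tilde g^*$. Setting $u:=v+g$ produces the unique weak solution to \eqref{dpweak+} in the sense of Definition \ref{weaks}. Uniqueness also follows directly from the strict monotonicity of $Tv+\mathcal A(v)$, obtained by testing the difference of two solutions against itself and invoking the boundary identity. Finally, testing the equation for $v$ against $v$ itself, discarding the non-negative boundary term, applying the $R(\Lambda)$ coercivity bound $(ii)$ to absorb the gradient square, and handling the remaining lower order terms involving $g$ and $g^*$ by Cauchy--Schwarz and Young's inequality, yields the quantitative energy estimate \eqref{exest}.
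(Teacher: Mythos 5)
Your route through Lions' theory of maximal monotone perturbations is genuinely different from the paper's, which instead uses the Brezis--Ekeland/Armstrong--Mourrat variational representation of the monotone symbol $A$ by a convex function $\tilde A$, recasts the problem as a uniformly convex minimization over the constraint set $\mathcal{A}(g^*)$, and proves that the minimum value is zero by convex duality. Unfortunately the proposal has a genuine gap exactly where you locate the ``principal technical obstacle,'' and that obstacle is not merely technical. First, the space $\mathcal V$ of ``functions in $W(U_X\times V_{Y,t})$ that vanish on $\partial_\K(U_X\times V_{Y,t})$'' is not well defined: no trace operator on $W(U_X\times V_{Y,t})$ is known, and the paper devotes Section \ref{sec6} to explaining that even a trace inequality on the non-Kolmogorov part of the boundary is open. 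One must instead work with $W_0(U_X\times V_{Y,t})$, the closure of $C^\infty_{\K,0}(\overline{U_X\times V_{Y,t}})$, and the Green identity then only yields the one-sided inequality $\iint\langle(\partial_t+X\cdot\nabla_Y)f,f\rangle\,\d Y\d t\ge 0$ by approximation with smooth functions --- which suffices for monotonicity, uniqueness and the energy estimate (this is how Lemma \ref{1stlemmahha} and Theorem \ref{compa} proceed), but not for the two-sided boundary identity you invoke.

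Second, and more seriously, the maximality of $T$ --- surjectivity of $T+I$ from its domain onto the dual, subject to the Kolmogorov boundary condition --- is not a routine Galerkin or vanishing-viscosity exercise; it is essentially equivalent to the linear well-posedness result one is trying to prove. The paper opens Section \ref{sec5} by explaining precisely why the regularization by $\epsilon\Delta_Y$ does not close: the uniform-in-$\epsilon$ energy estimates give no control up to $\partial_\K(U_X\times V_{Y,t})$, so one cannot identify the boundary condition satisfied by the limit, and in the nonlinear case one additionally needs a.e.\ convergence of $\nabla_X u_\epsilon$, which is unclear. In the classical parabolic setting the maximality of $\partial_t$ with zero initial data rests on the Lions--Magenes embedding of $\{u\in L^2(0,T;V):\ u'\in L^2(0,T;V^*)\}$ into $C([0,T];H)$; the kinetic analogue of that embedding is exactly the open trace problem for $W(U_X\times V_{Y,t})$. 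So the ``standard perturbation theorem of Browder--Minty type'' cannot be invoked until $T$ is proved maximal, and proving that is the whole difficulty --- it is what the variational and duality argument of the paper (following \cite{LN} and \cite{AAMN}) is designed to circumvent.
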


\subsection{Known regularity results} As mentioned, the equation in \eqref{pro2}, possibly also allowing for lower order terms, has attracted considerable attention in recent years. Anceschi-Cinti-Pascucci-Polidoro-Ragusa \cite{APR,CPP,PP} proved local boundedness of weak sub-solutions of \eqref{pro2} and some versions thereof. Their approach is based on the Moser's iteration technique, the use of  fundamental solutions and a Sobolev type inequality is crucial. It is worth noting that while the results in these papers are stated assuming only bounded and measurable coefficients, an implicit regularity assumption on the coefficients is imposed as the authors use a stronger notion of weak solutions assuming also
$(\partial_t+X\cdot\nabla_Y)u\in L^2_{\mathrm{loc}}$. It is unclear for what assumptions on the coefficients such weak solutions can be constructed. Bramanti-Cerutti-Manfredini-Polidoro-Ragusa \cite{BCM,MP,PRagusa} proved $L^p$ estimates, interior Sobolev regularity and local H\"older continuity of weak solutions of \eqref{pro2} imposing additional assumptions on the coefficients beyond  bounded, measurable and elliptic. In fact it was only recently that Golse-Imbert-Mouhot-Vasseur \cite{GIMV} proved local boundedness, Harnack inequality and local H\"older continuity of (true) weak solutions of \eqref{pro2} based on De-Giorgi and Moser's iteration technique. Still, it seems unclear to us how the authors actually resolve questions concerning the existence of weak solutions  unless   smooth coefficients are assumed qualitatively. However, subsequent developments have appeared in \cite{ Gu, GI, JC21}. A weak Harnack inequality for weak super-solutions of \eqref{pro2} has been obtained by Guerand-Imbert \cite{GI} and this has been generalized by Anceschi-Rebucci \cite{AR}. In \cite{JC21}, Guerand-Mouhot revisited the theory for the linear equation in \eqref{pro2}, also allowing for lower order terms, and gave  lucid, novel and short proofs of the De Giorgi
  intermediate-value Lemma, weak Harnack and Harnack
  inequalities, and the  H{\"o}lder continuity with quantitative
  estimates. \cite{JC21} is an essentially self-contained account of the linear theory.  Local H\"older continuity results are also proved in Wang-Zhang \cite{WZ2, WZ1, WZ3} for various linear analogues of \eqref{pro2}. We emphasize that all results mentioned concern linear equations. Zhu \cite{Zhu} proved local boundedness and local H\"older continuity of weak solutions of \eqref{pro2} when the drift term $\partial_t+X\cdot\nabla_Y$ is replaced by $\partial_t+b(X)\cdot \nabla_Y$ for some nonlinear function $b$.

\subsection{Known existence results} Boundary value problems for equations as in  \eqref{pro2} but in non-divergence form were studied by Manfredini \cite{Manfredini} who proved existence of strong solutions for the Dirichlet problem assuming H\"older continuous coefficients. Lanconelli-Lascialfari-Morbidelli \cite{LL, LM} considered a quasilinear case, still in non-divergence form, allowing the coefficients to depend not only on $(X,Y,t)$ but also the solution $u$,  and as a function of $(X,Y,t)$ the coefficients are assumed to be with H\"older continuous. In fact,  functional analytic approaches to weak solutions to  Kramers equation and Kolmogorov-Fokker-Planck equations have only recently been developed.
 Albritton-Armstrong-Mourrat-Novack \cite{AAMN} have developed a functional analytic approach to study well-posedness of Kramers equation, and its parabolic analogue
\begin{equation}\label{pKram}
\partial_t u-\Delta_X u+X\cdot\nabla_X u+X\cdot\nabla_Y u+b\cdot\nabla_X u=g^*,
\end{equation}
for suitable $g^*$. Equation \eqref{pKram} is often referred to as the kinetic Fokker-Planck equation. Litsg{\aa}rd-Nystr\"om \cite{LN} studied existence and uniqueness results for the (linear) Dirichlet problem associated with \eqref{pro2}, with rough coefficients $A$. In particular, in \cite{LN} Theorem \ref{weakdp1} is proved in the case when
$A(\xi,X,Y,t)=A(X,Y,t)\xi$. However, existence and uniqueness for \eqref{maineqn} do not seem to have been studied in the literature so far. It is important to note
that Theorem \ref{weakdp1} states, similar to \cite{LN}, the existence of a unique weak solution $u$ to the problem in
\eqref{dpweak+} in the sense of Definition \ref{weaks} below. The latter is, as it assumes no knowledge of underlying traces, trace spaces  and extension operators in the functional setting considered,  a weaker formulation of the Dirichlet problem compared to what one usually aims for. Indeed, this is one way to formulate a weak form of the
Dirichlet problem which circumvents a largely open problem in the context of kinetic Fokker-Planck equations, linear as well as non-linear, and that is the problem of a well defined trace operator and trace inequality. We refer to Section \ref{sec6} for more.

\subsection{Proofs}\label{proof}  The  regularity part of our results is modelled on the approach of  Golse-Imbert-Mouhot-Vasseur \cite{GIMV} and the work of Guerand-Mouhot \cite{JC21}. In fact, as can be seen from the very formulations of our regularity results, this part of our work is strongly influenced by \cite{JC21} and armed with Theorem \ref{nthm1} and Theorem \ref{locbd} we can to large extent refer to the corresponding arguments in \cite{JC21} for the proofs of Theorem \ref{thm3} and  Theorem \ref{thm4}. The new difficulties in our case  stem from the nonlinearity of $A$ in $\nabla_X u$. However, as we learn from the regularity theory for quasi-linear parabolic PDEs, see \cite{DGV} for example, a careful development of the
De Giorgi-Nash-Moser theory tends to be robust enough to handle the type of non-linearities considered in this paper. The higher integrability result in Theorem \ref{nthm1} is proved  by combining  the energy estimate in Lemma \ref{eng1} with a Sobolev regularity estimate and here it is important that $A$ has linear growth in $\nabla_X u$. In particular, in the proof of Theorem \ref{nthm1} one is lead, after  preliminaries and the use of an appropriate cut-off function, to conduct estimates for a (global) weak sub-solution $u_1$ to the equation
\begin{equation}\label{geqnint}
(\partial_t+X\cdot\nabla_Y)u_1\leq \nabla_X\cdot A(\nabla_X u_1, X,Y,t)+g^\ast,\ g^\ast:=-(\nabla_X\cdot F_1+F_0)\text{ in }\R^{N+1},
\end{equation}
where $F_1,F_0$ are in $L^2(\R^{N+1})$ and $u_1,F_1,F_0$ are supported in $Q_{r_0}(0,0,0)$.  To close the argument, as $u_1$ is only a weak sub-solution, it seems important to replace it by a function which actually solves an equation. In particular, to make this operational one needs to construct a weak solution $v$ to
\begin{equation}\label{geqnint+}
(\partial_t+X\cdot\nabla_Y)v=\nabla_X\cdot A(\nabla_X v, X,Y,t)+g^\ast,
\end{equation}
such that $v$ bounds $u_1$ from above. One approach to Sobolev regularity estimates is then attempt to use an approach based on Bouchut \cite{Bou} which implies a  Sobolev embedding
\begin{equation}\label{sobem}
H_{X,Y,t}^{{1}/{3}}(\R^{N+1})\to L_{X,Y,t}^q(\R^{N+1}),\quad q:=\frac{6(2m+1)}{6m+1}>2.
\end{equation}
 To get hold of the $H_{X,Y,t}^{{1}/{3}}(\R^{N+1})$ norm of $v$ one uses a result of Bouchut \cite{Bou} which gives control of $D_Y^{1/3}v,\ D_t^{1/3}v$ given energy estimates. To be able to bound
$u_1$ from above by $v$ as in \eqref{geqnint+} one seems to need  Theorem \ref{weakdp1} and the comparison principle that we prove in Theorem \ref{compa} below. As the result of Bouchut \cite{Bou} requires a solution which exists globally in time one can make this approach operational using Theorem \ref{weakdp1} to prove Theorem \ref{nthm1} with the cylinders in \eqref{cyl1} replaced by centered cylinders.
An alternative approach to Sobolev regularity estimates, which in the end gives Theorem \ref{nthm1} as stated, is to first observe
that if $u_1$ satisfies \eqref{geqnint}, then one deduces that the weak formulation of \eqref{geqnint} induces a positive distribution. One is therefore lead to prove estimates for $v$ satisfying
\begin{equation}\label{geqnint++}
(\partial_t+X\cdot\nabla_Y)v=\nabla_X\cdot A(\nabla_X v, X,Y,t)+g^\ast-\mu,
\end{equation}
where $\mu$ is now a positive measure. Due to the structure of $g^\ast$, Sobolev regularity estimates can then be deduced using a semi-classical approach via the fundamental solution associated to the linear equation $(\partial_t+X\cdot\nabla_Y)f=\Delta_X f$ originally studied by Kolmogorov, see Lemma 10 in \cite{JC21}. In the end, we follow this approach and here it is again important that $A$ has linear growth in $\nabla_X u$. Armed with the Sobolev regularity estimates the proofs of Theorem \ref{nthm1}-Theorem \ref{thm4} can be completed along the lines of the corresponding arguments in the linear case. Finally, to prove the existence and uniqueness result in Theorem \ref{weakdp1} we use a variational approach and proceed along the lines of \cite{AAMN,ABM,LN}. In particular, our argument is similar to the proof of Theorem 1.1 in \cite{LN}.

\subsection{Organization of the paper} In Section \ref{sec2} we introduce the functional setting and the notion of weak solutions. Section \ref{sec3} is devoted to a number of preliminary technical results to be used in the proofs of Theorem \ref{nthm1}-Theorem \ref{thm4}. Theorem \ref{nthm1}-Theorem \ref{thm4} are proved in
Section \ref{sec4}, and in the proof of Theorem \ref{thm3} and Theorem \ref{thm4} we for brevity  mainly refer to the corresponding arguments in \cite{JC21}. Theorem \ref{weakdp1} is proved in Section \ref{sec5}. In Section \ref{sec6} we mention a number of challenging problems for future research which we hope will inspire the community to look further into the topic of nonlinear Kolmogorov-Fokker-Planck type equations.

\section{The functional setting and weak solutions}\label{sec2}

\subsection{Function spaces} We denote by ${H}_X^1(U_X)$ the Sobolev
space of functions $g\in L_{}^2(U_X)$  whose distributional gradient in $U_X$ lies in $(L^2(U_X))^m$, i.e.
  \begin{eqnarray*}\label{fspace-}
{H}_X^1(U_X):=\{g\in L_{X}^2(U_X)\mid \nabla_Xg\in (L^2(U_X))^m\},
    \end{eqnarray*}
    and we set
    $$||g||_{{H}_X^1(U_X)}:=\bigl (||g||_{L^2(U_X)}^2+||\,|\nabla_Xg|\,||_{L^2(U_X)}^2\bigr )^{1/2},\ g\in {H}_X^1(U_X).$$
    We let ${H}_{X,0}^1(U_X)$ denote the closure of $C_0^\infty(U_X)$ in the norm of ${H}_X^1(U_X)$ and we recall, as $U_X$ is a bounded Lipschitz domain, that
    $C^\infty(\overline{U_X})$ is dense in ${H}_X^1(U_X)$. In particular, equivalently we could define ${H}_X^1(U_X)$ as the closure of $C^\infty(\overline{U_X})$
     in the norm $||\cdot||_{{H}_X^1(U_X)}$. {Note that as ${H}_{X,0}^1(U_X)$ is a Hilbert space it is reflexive, hence $({H}_{X,0}^1(U_X))^\ast=H_X^{-1}(U_X)$ and $(H_X^{-1}(U_X))^\ast={H}_{X,0}^1(U_X)$, where $()^\ast$ denotes the dual.
     Based on this we let ${H}_X^{-1}(U_X)$ denote the dual to ${H}_{X,0}^1(U_X)$ acting on functions in ${H}_{X,0}^1(U_X)$ through the
     duality pairing $\langle \cdot,\cdot\rangle:=\langle \cdot,\cdot\rangle_{H_X^{-1}(U_X),H_{X,0}^{1}(U_X)}$.} We let $L^2_{Y,t}(V_{Y,t},H_{X,0}^{1}(U_X))$ be the space of
     measurable function $u:V_{Y,t}\to H_{X,0}^{1}(U_X)$ equipped with the norm
     $$||u||^2_{L_{Y,t}^2(V_{Y,t},H_X^1(U_X))}:=\iint_{V_{Y,t}}||u(\cdot,Y,t)||_{{H}_X^1(U_X)}^2\, \d Y\d t.$$
     $L^2_{Y,t}(V_{Y,t},H_{X}^{-1}(U_X))$ is defined analogously. In analogy with the definition of ${H}_X^1(U_X)$, we let $W(U_X\times V_{Y,t})$ be the closure of $C^\infty(\overline{U_X\times V_{Y,t}})$ in the norm
     \begin{align}\label{weak1-+}
     ||u||_{W(U_X\times V_{Y,t})}&:=\bigl (||u||_{L_{Y,t}^2(V_{Y,t},H_X^1(U_X))}^2+||(\partial_t+X\cdot\nabla_Y)u||_{L_{Y,t}^2(V_{Y,t},{H}_X^{-1}(U_X))}^2\bigr )^{1/2}.
    \end{align}
    In particular, $W(U_X\times V_{Y,t})$ is a Banach space and  $u\in W(U_X\times V_{Y,t})$ if and only if \begin{eqnarray}\label{weak1-}
u\in L_{Y,t}^2(V_{Y,t},H_X^1(U_X))\quad\mbox{and}\quad (\partial_t+X\cdot\nabla_Y)u\in  L_{Y,t}^2(V_{Y,t},H_X^{-1}(U_X)).
    \end{eqnarray}
       Note that the dual of $L_{Y,t}^2(V_{Y,t},H_{X,0}^1(U_X))$, denoted by $(L_{Y,t}^2(V_{Y,t},H_{X,0}^1(U_X)))^\ast$, satisfies $$(L_{Y,t}^2(V_{Y,t},H_{X,0}^1(U_X)))^\ast=L_{Y,t}^2(V_{Y,t},H_X^{-1}(U_X)),$$
    and, as mentioned above,
    $$(L_{Y,t}^2(V_{Y,t},H_X^{-1}(U_X)))^\ast=L_{Y,t}^2(V_{Y,t},H_{X,0}^1(U_X)).$$
Finally, the spaces $L_{Y,t,\mathrm{loc}}^2(V_{Y,t},H_{X,\mathrm{loc}}^1(U_X))$, $L_{Y,t,\mathrm{loc}}^2(V_{Y,t},{H}_{X,\mathrm{loc}}^{-1}(U_X))$, and $W_{\mathrm{loc}}(U_X\times V_{Y,t})$ are defined in the natural way.  The topological boundary of $U_X\times V_{Y,t}$ is denoted by $\partial(U_X\times V_{Y,t})$. Let $N_{Y,t}$ denote the outer unit normal to $V_{Y,t}$. We define a subset
    $\partial_{\K}(U_X\times V_{Y,t})\subset\partial(U_X\times V_{Y,t})$, the Kolmogorov boundary of $U_X\times V_{Y,t}$, as in \eqref{Kolb}. We let $C^\infty_{\K,0}(\overline{U_X\times V_{Y,t}})$ and $C^\infty_{X,0}(\overline{U_X\times V_{Y,t}})$ be the set of functions in
$C^\infty(\overline{U_X\times V_{Y,t}})$ which vanish on $\partial_{\K}(U_X\times V_{Y,t})$ and $\{(X,Y,t)\in \partial{U_X}\times \overline{V_{Y,t}}\}$, respectively.  We let $W_0(U_X\times V_{Y,t})$ and  $W_{X,0}(U_X\times V_{Y,t})$ denote the closure in the norm of
$W(U_X\times V_{Y,t})$ of $C^\infty_{\K,0}(\overline{U_X\times V_{Y,t}})$ and $C^\infty_{X,0}(\overline{U_X\times V_{Y,t}})$, respectively.

\subsection{Weak solutions} We here introduce the notion of weak solutions.
\begin{definition}\label{wksoldef} Let $g^*\in L_{Y,t}^2(V_{Y,t},{H}_X^{-1}(U_X))$. A function $u\in W_{\mathrm{loc}}(U_X\times V_{Y,t})$  is said to be a  weak sub-solution (or super-solution) to the equation
\begin{align}\label{eeqa}
(\partial_t+X\cdot\nabla_Y)u-\nabla_X\cdot(A(\nabla_X u,X,Y,t))+g^*=0 \text{ in } \ U_X\times V_{Y,t},
\end{align}
if for every $V_X\times V_Y\times J\Subset U_X\times V_{Y,t}$,  and for all non-negative $\phi\in L_{Y,t}^2(V_Y\times J,H_{X,0}^1(V_X))$, we have
\begin{align}\label{wksol}
&\iiint_{V_X\times V_Y\times J}A(\nabla_X u, X,Y,t)\cdot\nabla_X\phi\,\d X \d Y \d t\notag\\
&+\iint_{V_Y\times J}\ \langle g^\ast(\cdot,Y,t)+ (\partial_t+X\cdot\nabla_Y)u(\cdot,Y,t),\phi(\cdot,Y,t)\rangle\, \d Y \d t\leq 0\quad (\text{ or }\geq ).
\end{align}
We say that $u\in W_{\mathrm{loc}}(U_X\times V_{Y,t})$ is a weak solution to the equation \eqref{eeqa} if equality holds in \eqref{wksol} without a sign restriction on $\phi$.
\end{definition}

Note that if $u$ is a weak sub-solution (or super-solution) of \eqref{eeqa} in the sense of Definition \ref{wksoldef} above, with $g^\ast\equiv 0$, then
\begin{align}\label{wksol1}
&\iint_{V_X\times V_Y}u(X,Y,t_2)\phi(X,Y,t_2)\,\d X \d Y-\iint_{V_X\times V_Y}u(X,Y,t_1)\phi(X,Y,t_1)\,\d X \d Y\notag\\
&-\int_{t_1}^{t_2}\iint u(\partial_t+X\cdot\nabla_Y)\phi\,\d X \d Y \d t+\int_{t_1}^{t_2}\iint A(\nabla_X u, X,Y,t)\cdot\nabla_X\phi\,\d X \d Y \d t\notag\\
&\leq 0\quad (\text{ or }\geq ),
\end{align}
whenever $\phi\in C^\infty((t_1,t_2), C^\infty_0(V_X\times V_Y))$, is non-negative function.
Furthermore, equality holds in \eqref{wksol1} for every weak solution $u$ of \eqref{eeqa} without a sign restriction on $\phi$.
\begin{remark}\label{wksolrmk} Assume  $g^\ast\equiv 0$. $(i)$ From Definition \ref{wksoldef}, it is clear that, if $u$ is a weak sub-solution (resp. super-solution or solution) of \eqref{eeqa} in $U_X\times V_{Y,t}$, then for any $k\in\R$, the function $v=(u-k)$ is also weak sub-solution (resp. super-solution or solution) of \eqref{eeqa} in $U_X\times V_{Y,t}$.\\
\noindent
$(ii)$ Using the homogeneity property $(iii)$ of $A$, it follows that, (a) for any $c\geq 0$, $cu$ is a weak sub-solution (resp. super-solution or solution) of \eqref{eeqa} in $U_X\times V_{Y,t}$, provided $u$ is a weak sub-solution (resp. super-solution or solution) of \eqref{eeqa} in $U_X\times V_{Y,t}$ and (b) $u$ is a weak solution of \eqref{eeqa} in $U_X\times V_{Y,t}$ if and only if $-u$ is a weak solution of \eqref{eeqa} in $U_X\times V_{Y,t}$.
\end{remark}

\subsection{The Dirichlet problem} Theorem \ref{weakdp1} is a statement concerning existence and uniqueness of  weak solutions
to a formulation of the Dirichlet problem in \eqref{dpweak+}. In particular, we study weak solutions in the following sense.
\begin{definition}\label{weaks} Consider $(g,g^\ast)\in W(U_X\times V_{Y,t})\times L_{Y,t}^2(V_{Y,t},{H}_X^{-1}(U_X))$. Given $(g,g^\ast)$, $u$ is said to be a weak solution to the problem in
\eqref{dpweak+} if
\begin{eqnarray}\label{weak1}
 u\in  W(U_X\times V_{Y,t}),\quad (u-g)\in  W_0(U_X\times V_{Y,t}),
    \end{eqnarray}
    and if
\begin{equation}\label{weak3}
    \begin{split}
     &\iiint_{U_X\times V_{Y,t}}\ A(\nabla_Xu,X,Y,t)\cdot \nabla_X\phi\, \d X \d Y \d t\\
    &+\iint_{V_{Y,t}}\ \langle g^*(\cdot,Y,t)+(\partial_t+X\cdot\nabla_Y)u(\cdot,Y,t),\phi(\cdot,Y,t)\rangle\, \d Y \d t=0,
\end{split}
\end{equation}
for all $ \phi\in L_{Y,t}^2(V_{Y,t},H_{X,0}^1(U_X))$ and where $\langle \cdot,\cdot\rangle=\langle \cdot,\cdot\rangle_{H_X^{-1}(U_X),H_{X,0}^{1}(U_X)}$ is the duality pairing in $H_X^{-1}(U_X)$. If in \eqref{weak3}, $=$ is replaced by $\leq (\geq)$ whenever $\phi\geq 0$, then $u$ is said to be a weak sub- (super-) solution of \eqref{dpweak+} respectively.
\end{definition}

\section{Technical lemmas} \label{sec3}
In this section we prove a number of technical results to be used in the proof of Theorem \ref{nthm1}-Theorem \ref{thm4}. Throughout the rest of the paper, we use the notation $s^{+}:=\max\{s,0\}$ for $s\in\R$. Moreover, from Section \ref{sec3}-\ref{sec4}, we assume that the symbol $A$ belongs to the class $M(\Lambda)$ introduced in Definition \ref{class1}.

\begin{lemma}\label{eng1}
Let $Z_0=(X_0,Y_0,t_0)\in\R^{N+1}$, $0<r_1<r_0$, be such that $Q_{r_0}(Z_0,t_0)\Subset U_X\times V_{Y,t}$. Let $u$ be a weak sub-solution of the equation \eqref{maineqn} in $U_X\times V_{Y,t}$  in the sense of Definition \ref{wksoldef}.
Then
\begin{align}\label{stform5uu}
&\sup_{t_0-r_{1}^2<t<t_0}\iint_{Q^t(Z_0,r_1)} u^2(X,Y,t)\,\d X \d Y+\Lambda^{-1}\iiint_{Q_{r_1}(Z_0,r_1)}|\nabla_X u|^2\,\d X \d Y \d t\notag\\
&\leq c c_{0,1}\iiint_{Q_{r_0}(Z_0,t_0)}u(X,Y,t)^2\,\d X \d Y \d t,
\end{align}
where $Q^t(Z_0,r):=\{(X,Y):(X,Y,t)\in Q_{r}(Z_0,t_0)\}$ for $r>0$,\,$c=c(m,\Lambda)\geq 1$ and
$$
c_{0,1}:=\frac{1}{(r_0-r_1)^2}+\frac{r_0+|X_0|}{(r_0 -r_1)r_1^{2}}+\frac{1}{(r_0 -r_1)r_1}+1.
$$
\end{lemma}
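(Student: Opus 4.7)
The plan is to establish a Caccioppoli-type estimate by testing the weak sub-solution inequality \eqref{wksol} against $\phi = u^{+}\psi^{2}\eta_\tau(t)$, where $\psi$ is a smooth spatial cutoff adapted to the cylinders $Q_{r_0}(Z_0,t_0)\supset Q_{r_1}(Z_0,t_0)$, $\eta_\tau$ is a smooth decreasing temporal cutoff approximating $\chi_{(-\infty,\tau)}$ used to extract the $\sup_\tau$ term, and $u^{+}=\max(u,0)$. The odd homogeneity in Definition \ref{class1}$(iii)$ makes $-u$ a super-solution whenever $u$ is a sub-solution, so running the same argument on $-u$ controls $u^{-}$ and summing yields the estimate for $u^{2}=(u^{+})^{2}+(u^{-})^{2}$.

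I would construct $\psi(Z',t'):=\tilde\psi\bigl((Z_0,t_0)^{-1}\circ(Z',t')\bigr)$ with $\tilde\psi(X,Y,t)=\psi_X(X)\psi_Y(Y)\psi_t(t)$ a product of standard one-variable cutoffs equal to $1$ on $|X|\leq r_1$, $|Y|\leq r_1^{3}$, $-r_1^{2}\leq t\leq 0$ and vanishing outside the corresponding $r_0$-sets. Using $r_0^{k}-r_1^{k}\geq (r_0-r_1)r_1^{k-1}$ for $k\in\{2,3\}$, one obtains $|\nabla_{X'}\psi|\lesssim (r_0-r_1)^{-1}$, $|\nabla_{Y'}\psi|\lesssim [(r_0-r_1)r_1^{2}]^{-1}$, and since $\partial_{t'}\psi=\partial_{t}\tilde\psi-X_0\cdot\nabla_{Y}\tilde\psi$ by the chain rule in the group translation, $|\partial_{t'}\psi|\lesssim [(r_0-r_1)r_1]^{-1}+|X_0|[(r_0-r_1)r_1^{2}]^{-1}$. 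Bounding the advection using $|X'|\leq |X_0|+r_0$ on $\mathrm{supp}(\psi)$ then gives $|(\partial_{t'}+X'\cdot\nabla_{Y'})\psi|\lesssim [(r_0-r_1)r_1]^{-1}+(|X_0|+r_0)[(r_0-r_1)r_1^{2}]^{-1}$, which is exactly the source of the corresponding terms in $c_{0,1}$.

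The core computation proceeds as follows. The drift contribution rewrites via the chain-rule identity $u^{+}(\partial_{t}+X\cdot\nabla_{Y})u=\tfrac{1}{2}(\partial_{t}+X\cdot\nabla_{Y})(u^{+})^{2}$ together with the fact that the vector field $(1,X)$ is divergence-free in $(Y,t)$ for frozen $X$; sending $\eta_\tau\to\chi_{(-\infty,\tau)}$ produces
\[
\tfrac{1}{2}\!\iint (u^{+})^{2}\psi^{2}\,\d X\d Y\,\Big|_{t=\tau}\;-\;\tfrac{1}{2}\!\iiint_{t<\tau}(u^{+})^{2}(\partial_{t}+X\cdot\nabla_{Y})\psi^{2}\,\d X\d Y\d t.
\]
For the elliptic term, Definition \ref{class1}$(ii)$ together with the chain rule $A(\nabla_X u^{+},\cdot)=\chi_{\{u>0\}}A(\nabla_X u,\cdot)$ (which uses $(iii)$) gives the coercive piece $\Lambda^{-1}\!\iiint|\nabla_X u^{+}|^{2}\psi^{2}$, while $(i)$ combined with Cauchy--Schwarz and Young's inequality bounds the cross term $2\!\iiint A(\nabla_X u^{+},\cdot)\cdot u^{+}\psi\nabla_X\psi$ by $\tfrac{1}{2}\Lambda^{-1}\!\iiint|\nabla_X u^{+}|^{2}\psi^{2}+c(\Lambda)\!\iiint(u^{+})^{2}|\nabla_X\psi|^{2}$; absorbing half of the coercive contribution delivers the two desired left-hand-side pieces.

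Collecting everything and inserting the derivative bounds on $\psi$ yields the multiplicative constant $c(m,\Lambda)\,c_{0,1}$ on $\iiint (u^{+})^{2}$, and taking the supremum over $\tau\in(t_0-r_1^{2},t_0)$ finishes the estimate for $u^{+}$; the $u^{-}$ half follows by symmetry. The main technical hurdle is the rigorous justification of the pointwise product rule in the drift step: since $u\in W_{\mathrm{loc}}$ has $(\partial_{t}+X\cdot\nabla_{Y})u$ only in the distributional dual $L^{2}_{Y,t}H^{-1}_X$, I would regularize $u$ via Steklov averages along the characteristic flow of the drift---which commute cleanly with $\partial_{t}+X\cdot\nabla_{Y}$ because $(1,X)$ is constant in $(Y,t)$ for frozen $X$---derive the identity for the averaged sub-solution, and pass to the limit; alternatively the identity is available from the Hilbertian integration-by-parts framework developed in \cite{LN,AAMN}.
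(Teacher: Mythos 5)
Your treatment of $u^{+}$ is essentially the paper's own argument: the paper tests \eqref{wksol1} with $2u\psi^{2}\theta_{\epsilon}$, where $\psi$ has exactly the derivative bounds you derive and $\theta_{\epsilon}$ approximates $\chi_{(l_1,l_2)}$, uses \eqref{assump2}-$(i)$,$(ii)$ with Young's inequality to absorb the cross term, and extracts the supremum by varying $l_2$; your Steklov regularization along characteristics is a more explicit version of the paper's ``formal computation made rigorous a posteriori by density of $C^\infty$ in $W$''. The constant $c_{0,1}$ comes out the same way.

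The step that fails is the reduction of $u^{-}$ to the super-solution $-u$. By homogeneity $-u$ is indeed a weak super-solution, but ``the same argument'' applied to a super-solution $v$ with the non-negative test function $v^{+}\psi^{2}\eta_\tau$ yields an inequality with the \emph{reversed} sign: the coercive term $\Lambda^{-1}\iiint \psi^{2}|\nabla_X v^{+}|^{2}$ then sits on the favorable side of a ``$\geq 0$'' and cannot be bounded. What testing a super-solution $v$ does control is $v^{-}=(-u)^{-}=u^{+}$, i.e.\ exactly the part you already have. In fact no Caccioppoli bound for $u^{-}$ in terms of $\|u\|_{L^2(Q_{r_0})}$ can hold for sign-changing sub-solutions: already for $A(\xi,X,Y,t)=\xi$ and $m=5$, the functions $u_k:=-\min(|X|^{-2},k)$ (independent of $Y$ and $t$, hence with vanishing drift derivative) are weak sub-solutions of \eqref{maineqn} lying in $W_{\mathrm{loc}}$, with $\|u_k\|_{L^2(Q_{r_0})}$ bounded in $k$ while $\iiint_{Q_{r_1}}|\nabla_X u_k|^{2}\to\infty$, so \eqref{stform5uu} is violated. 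The lemma is therefore only correct for non-negative sub-solutions (or for solutions); this is how it is used throughout the paper (always for $u\geq 0$ or for $(u-k)^{+}$), and the paper's own proof silently assumes it by taking $2u\psi^{2}\theta_{\epsilon}\geq 0$ as a test function. The fix is not to repair your $u^{-}$ half but to delete it and add the hypothesis $u\geq 0$; your $u^{+}$ half is then the whole proof.
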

\begin{proof} Let  $t_1:=t_0-r_0^{2}$ and $t_2:=t_0$. Considering $l_1$, $l_2$, such that $t_1<l_1<l_2<t_2$, we introduce for $\epsilon>0$ the function $\theta_{\epsilon}\in W^{1,\infty}((t_1,t_2))$ by
\begin{equation}\label{auxfn}
\theta_{\epsilon}(t):=
\begin{cases}
0\text{ if }t_1\leq t\leq l_1-\epsilon,\\
1+\frac{t-l_1}{\epsilon},\text{ if }l_1-\epsilon<t\leq l_1,\\
1\text{ if }l_1<t\leq l_2,\\
1-\frac{t-l_2}{\epsilon}\text{ if }l_2\leq t\leq l_2+\epsilon,\\
0\text{ if }l_2+\epsilon<t\leq t_2.
\end{cases}
\end{equation}
Let $\psi\in[0,1]$ be smooth in $Q_{r_0}(Z_0,t_0)$ such that $\psi\equiv 1$ on $Q_{r_1}(Z_0,t_0)$ and $\psi\equiv 0$ outside $Q_{r_0}(Z_0,t_0)$ satisfying
$$|\nabla_X\psi|\leq\frac{c}{r_0- r_1},\quad |\nabla_Y\psi|\leq\frac{c}{(r_0-r_{1})r_{1}^2},\quad |\partial_t\psi|\leq\frac{c}{(r_0-r_{1})r_{1}},$$
for some constant $c=c(m)\geq 1$.

Consider the function $\phi(X,Y,t)=2u(X,Y,t)\psi^2(X,Y,t)\theta_{\epsilon}(t)$. We intend to test \eqref{wksol1} with $\phi$ and the following deductions are formal. However, as $u$ is a weak sub-solution of the equation \eqref{maineqn} in $U_X\times V_{Y,t}$  in the sense of Definition \ref{wksoldef}, we know that
$u\in W_{\mathrm{loc}}(U_X\times V_{Y,t})$ and as $W(U_X\times V_{Y,t})$ is defined as the closure of $C^\infty(\overline{{U_X\times V_{Y,t}}})$ in the norm introduced in \eqref{weak1-+} our deduction can be made rigorous a posteriori. Testing \eqref{wksol1} with $\phi(X,Y,t)$, letting $\epsilon\to 0$, and then adding
$$
\iiint u^{2}\partial_t (\psi^2)\,\d X \d Y \d t
$$
on both sides of the resulting inequality, we deduce that
\begin{align}\label{stform2}
&I(l_2)-I(l_1)+
2\iiint A(\nabla_X u,X,Y,t)\cdot\nabla_X(u\psi^2)\,\d X \d Y \d t\notag\\
&\leq \iiint u^{2}(\partial_t+X\cdot\nabla_Y) \psi^2\,\d X \d Y \d t,
\end{align}
where
\begin{align*}
I(t):=\iint\psi^2(X,Y,t)u^2(X,Y,t)\,\d X \d Y.
\end{align*}
Using \eqref{assump2}, \eqref{stform2} yields
\begin{align}\label{stform3}
&I(l_2)-I(l_1)+2\iiint\psi^2 A(\nabla_X u,X,Y,t)\cdot\nabla_X u\,\d X \d Y \d t\notag\\
&\leq \iiint u^{2}(\partial_t+X\cdot\nabla_Y) \psi^2\,\d X \d Y \d t\notag\\
&-4\iiint u\psi A(\nabla_X u,X,Y,t))\cdot\nabla_X\psi\,\d X \d Y \d t\notag\\
&\leq\iiint u^2\{(\partial_t+X\cdot\nabla_Y)\psi^2+4\Lambda^{3}(\psi+|\nabla_X\psi|)^2\}\,\d X \d Y \d t\notag\\
&+\iiint\Lambda^{-1}\psi^2|\nabla_X u|^2\,\d X \d Y \d t.
\end{align}
Furthermore, using \eqref{assump2}-$(i),(ii)$ we can continue the above estimate and conclude that
\begin{align}\label{stform4}
&I(l_2)-I(l_1)+\Lambda^{-1}\iiint_{Q^t(Z_0,r_0)\times (l_1,l_2)}\psi^2 |\nabla_X u|^2\,\d X \d Y \d t\notag\\
&\leq\iiint u^2\{(\partial_t+X\cdot\nabla_Y)\psi^2+4\Lambda^{3}(\psi+|\nabla_X\psi|)^2\}\,\d X \d Y \d t.
\end{align}
Using the properties of $\psi$ and first letting $l_1\to t_1$, and then letting $l_2\to t_2$ in \eqref{stform4}, we obtain
\begin{align}\label{stform5}
&\Lambda^{-1}\iiint_{Q_{r_1}(Z_0,t_0)}|\nabla_X u|^2\,\d X \d Y \d t\notag\\
&\leq\iiint_{Q_{r_0}(Z_0,t_0)} u^2\{(\partial_t+X\cdot\nabla_Y)\psi^2+4\Lambda^{3}(\psi+|\nabla_X\psi|)^2\}\,\d X \d Y \d t\notag\\
&\leq c c_{0,1}\iiint_{Q_{r_0}(Z_0,t_0)}u(X,Y,t)^2\,\d X \d Y \d t,
\end{align}
where $c=c(m,\Lambda)\geq 1$ and
$$
c_{0,1}:=\frac{1}{(r_0-r_1)^2}+\frac{r_0+|X_0|}{(r_0 -r_1)r_1^{2}}+\frac{1}{(r_0 -r_1)r_1}+1.
$$
Again using the properties of $\psi$ and first letting $l_1\to t_1$ in \eqref{stform4}, then taking supremum over $l_2\in[t_0-r_{1}^2,t_0)$ and noting that for such $l_2$, $\psi\equiv 1$, we also have
\begin{align}\label{stform7}
&\sup_{t_0-r_{1}^2<t<t_0}\iint_{Q^t(Z_0,r_0)} u^2(X,Y,t)\,\d X \d Y\\
&\leq\iiint_{Q_{r_0}(Z_0,r_0)} u^2\{(\partial_t+X\cdot\nabla_Y)\psi^2+4\Lambda^{3}(\psi+|\nabla_X\psi|)^2\}\,\d X \d Y \d t\notag\\
&\leq c c_{0,1}\iiint_{Q_{r_0}(Z_0,t_0)}u(X,Y,t)^2\,\d X \d Y \d t.
\end{align}
This completes the proof.
\end{proof}


\begin{lemma}\label{sub}
Let $u$ be a weak sub-solution of the equation \eqref{maineqn} in $U_X\times V_{Y,t}$ in the sense of Definition \ref{wksoldef}. Let $k\in\R$. Then $(u-k)^+$ is also a weak sub-solution of the equation \eqref{maineqn} in $U_X\times V_{Y,t}$ in the sense of Definition \ref{wksoldef}.
\end{lemma}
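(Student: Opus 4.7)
By Remark \ref{wksolrmk}(i), $v := u - k$ is itself a weak sub-solution of \eqref{maineqn}, so it suffices to prove that $u^+$ is a weak sub-solution whenever $u$ is. Fix a subcylinder $V_X \times V_Y \times J \Subset U_X \times V_{Y,t}$ and a non-negative test function $\phi \in L^2_{Y,t}(V_Y \times J, H^1_{X,0}(V_X))$. For $\epsilon > 0$ I choose $H_\epsilon \in C^1(\R)$ non-decreasing with $H_\epsilon \equiv 0$ on $(-\infty, 0]$ and $H_\epsilon \equiv 1$ on $[\epsilon, \infty)$, and set $G_\epsilon(s) := \int_0^s H_\epsilon(\tau)\,\d\tau$, so that $G_\epsilon' = H_\epsilon$ and $G_\epsilon(s) \to s^+$ uniformly in $s$. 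Since $H_\epsilon$ is bounded Lipschitz with $H_\epsilon(0)=0$, the product $\phi H_\epsilon(u) \geq 0$ lies in $L^2_{Y,t}(V_Y\times J, H^1_{X,0}(V_X))$ and is therefore an admissible test function in the weak sub-solution inequality \eqref{wksol} for $u$.

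Using the chain rule $\nabla_X(\phi H_\epsilon(u)) = H_\epsilon(u)\nabla_X\phi + \phi H_\epsilon'(u)\nabla_X u$ in \eqref{wksol} produces the integral $\iiint \phi H_\epsilon'(u)\,A(\nabla_X u, X,Y,t)\cdot\nabla_X u\,\d X \d Y \d t$, which is non-negative by $\phi\geq 0$, $H_\epsilon'\geq 0$ and the ellipticity \eqref{assump2}(ii), and may therefore be discarded, leaving
\begin{align*}
&\iiint H_\epsilon(u)\, A(\nabla_X u, X,Y,t)\cdot\nabla_X\phi\,\d X \d Y \d t\\
&\quad + \iint \langle (\partial_t+X\cdot\nabla_Y) G_\epsilon(u)(\cdot,Y,t),\phi(\cdot,Y,t)\rangle\,\d Y \d t \leq 0,
\end{align*}
where the transport term has been rewritten via the chain rule identity $(\partial_t+X\cdot\nabla_Y)G_\epsilon(u) = H_\epsilon(u)(\partial_t+X\cdot\nabla_Y)u$, valid in $L^2_{Y,t}(V_Y\times J, H^{-1}_X(V_X))$.

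To conclude, I let $\epsilon \to 0^+$. By \eqref{assump2}(i) one has $A(0,\cdot)\equiv 0$, so using $\nabla_X u^+ = \chi_{\{u>0\}}\nabla_X u$ together with the homogeneity \eqref{assump2}(iii) gives $A(\nabla_X u^+, \cdot) = \chi_{\{u>0\}} A(\nabla_X u, \cdot)$ almost everywhere; dominated convergence with dominating function $\Lambda|\nabla_X u||\nabla_X\phi|\in L^1$ then sends the diffusion integral to $\iiint A(\nabla_X u^+, X,Y,t)\cdot\nabla_X\phi\,\d X \d Y \d t$. Similarly, $G_\epsilon(u)\to u^+$ strongly in $L^2_{Y,t}(V_Y\times J, H^1_X(V_X))$ (since $\nabla_X G_\epsilon(u) = H_\epsilon(u)\nabla_X u \to \chi_{\{u>0\}}\nabla_X u$ by dominated convergence), so $(\partial_t+X\cdot\nabla_Y)G_\epsilon(u) \to (\partial_t+X\cdot\nabla_Y)u^+$ in $L^2_{Y,t}(V_Y\times J, H^{-1}_X(V_X))$, yielding the sub-solution inequality for $u^+$. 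The main obstacle is the rigorous justification of the chain rule identity used in the transport term: since $(\partial_t+X\cdot\nabla_Y)u$ only lives in the negative-order space $L^2_{Y,t}(H^{-1}_X)$, it cannot be multiplied pointwise by $H_\epsilon(u)$. The remedy is a Steklov-type averaging of $u$ along the characteristics of the Kolmogorov vector field, exploiting the group law \eqref{e70} to translate along $(\tau X, \tau)$ in the $(Y,t)$ variables, so that the chain rule holds classically on the regularized $u_h$, followed by passage to the limit in the averaging parameter using the Lipschitz character of $G_\epsilon$ and the defining regularity $u\in W_{\mathrm{loc}}(U_X\times V_{Y,t})$.
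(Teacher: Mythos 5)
Your proposal is correct and follows essentially the same route as the paper: the paper tests with $\frac{u^+}{u^++\epsilon}\,\phi$ (a Lipschitz approximation of $\chi_{\{u>0\}}\phi$) where you test with $H_\epsilon(u)\phi$, in both cases the extra gradient term is discarded by the ellipticity \eqref{assump2}-$(ii)$, and the limit $\epsilon\to 0$ yields the sub-solution inequality for $u^+$. The chain-rule subtlety in the transport term that you flag is present in the paper's argument as well (it is passed over there without comment), so your explicit acknowledgment and proposed regularization only make the write-up more careful.
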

\begin{proof}
By Remark \ref{wksolrmk}, it is enough to prove that $u^+$ is a weak sub-solution of \eqref{maineqn}.
Let $\epsilon>0$ and $\phi\in L_{Y,t}^2(V_Y\times J,H_{X,0}^1(V_X))$ be a non-negative test function in \eqref{wksol}. Then  $\frac{u^+}{(u^+ +\epsilon)}\phi\in L_{Y,t}^2(V_Y\times J,H_{X,0}^1(V_X))$ is also a non-negative test function in \eqref{wksol}. Using $\frac{u^+}{(u^+ +\epsilon)}\phi$ as a test function in \eqref{wksol}, we obtain
\begin{align}\label{wksoltest}
&\iiint_{V_X\times V_Y\times J}A(\nabla_X u, X,Y,t)\cdot\nabla_X\phi \frac{u^+}{(u^+ +\epsilon)}\,\d X \d Y \d t\notag\\
&+\epsilon\iiint_{V_X\times V_Y\times J}A(\nabla_X u, X,Y,t)\cdot\frac {\nabla_X u^+}{(u^+ +\epsilon)^2}\phi\,\d X \d Y \d t\notag\\
&+\iint_{V_Y\times J}\ \langle (\partial_t+X\cdot\nabla_Y)u(\cdot,Y,t),\frac{u^+}{(u^+ +\epsilon)}\phi(\cdot,Y,t)\rangle\, \d Y \d t\leq 0.
\end{align}
Letting $\epsilon\to 0$, we obtain
\begin{align}\label{sub2}
&\iint_{V_Y\times J}\langle(\partial_t+X\cdot\nabla_Y)u^+(\cdot,Y,t),\phi(\cdot,Y,t)\rangle\,\d Y \d t\notag\\
&+\iiint_{V_X\times V_Y\times J}A(\nabla_X u^+,X,Y,t)\cdot\nabla_X\phi\,\d X \d Y \d t\notag\\
&+\lim\inf_{\epsilon\to 0}\epsilon\iiint_{V_X\times V_Y\times J}\frac{A(\nabla_X u^+,X,Y,t)\cdot\nabla_X u^+}{(u^+ +\epsilon)^2}\phi\,\d X \d Y \d t\leq 0.
\end{align}
However, by \eqref{assump2}-$(ii)$
\begin{align}\label{sub2gg}
\iiint_{V_X\times V_Y\times J}\frac{A(\nabla_X u^+,X,Y,t)\cdot\nabla_X u^+}{(u^+ +\epsilon)^2}\phi\,\d X \d Y \d t\geq 0.
\end{align}
Hence,
\begin{align}\label{sub2tt}
&\iint_{V_Y\times J}\langle(\partial_t+X\cdot\nabla_Y)u^+(\cdot,Y,t),\phi(\cdot,Y,t)\rangle\,\d Y \d t\notag\\
&+\iiint_{V_X\times V_Y\times J}A(\nabla_X u^+,X,Y,t)\cdot\nabla_X\phi\,\d X \d Y \d t\leq 0.
\end{align}
This proves that $u^+$ is a weak sub-solution.
\end{proof}

The following result follows from \cite[Lemma 10]{JC21}.
\begin{lemma}\label{flem}
Let $f\geq 0$ be locally integrable such that
\begin{equation}\label{flemeqn}
(\partial_t+X\cdot\nabla_Y-\Delta_X)f=\nabla_X\cdot F_1+F_2-\mu,
\end{equation}
where $F_1,F_2\in L^1\cap L^2(\R^{2m}\times \R_-)$ and $\mu\in M^1(\R^{2m}\times \R_-)$ is a non-negative measure with finite mass in $\R^{2m}\times \R_-$ such that $F_1,F_2$ and $\mu$ have compact support, in the time variable, included  in $(-\tau,0]$. Then for any $p\in[2,2+{1}/{m})$ and $\sigma\in[0,{1}/{3})$ we have
\begin{equation}\label{flem1}
\|f\|_{L^p(\R^{2m}\times \R_-)}\leq c\Big(2+\frac{1}{m}-p\Big)^{-1}(\|F_1\|_{L^2(\R^{2m}\times \R_-)}+\|F_2\|_{L^2(\R^{2m}\times \R_-)})
\end{equation}
and
\begin{align}\label{flem2}
\|f\|_{L_{t,X}^1 W_Y^{\sigma,1}(\R^{2m}\times \R_-)}\leq& c\Big(\frac{1}{3}-\sigma\Big)^{-1}(\|F_1\|_{L^1(\R^{2m}\times \R_-)}+\|F_2\|_{L^1(\R^{2m}\times \R_-)})\notag\\
&+c\Big(\frac{1}{3}-\sigma\Big)^{-1}\|\mu\|_{M^1(\R^{2m}\times \R_-)},
\end{align}
for some constant $c=c(\tau)$.
\end{lemma}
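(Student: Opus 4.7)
\textbf{Proof plan for Lemma \ref{flem}.} The plan is to represent $f$ via the explicit Kolmogorov fundamental solution and then to reduce to the quantitative kernel estimates of \cite[Lemma 10]{JC21}. Let $\Gamma$ denote the fundamental solution of the linear operator $\mathcal{L}:=\partial_t+X\cdot\nabla_Y-\Delta_X$, namely the explicit non-negative Gaussian-type kernel computed by Kolmogorov; it is homogeneous of degree $-(4m+2)$ under the dilations $\delta_r$ (the homogeneous dimension being $Q=4m+2$), and convolution is taken with respect to the group law \eqref{e70}. Since $F_1,F_2,\mu$ are supported in time in $(-\tau,0]$ and $\mathcal{L}$ is causal, solving \eqref{flemeqn} with zero data at $t=-\tau$ gives
\begin{equation*}
f \;=\; (\nabla_X\Gamma)\ast F_1 \;+\; \Gamma\ast F_2 \;-\; \Gamma\ast\mu.
\end{equation*}

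For the $L^p$ estimate \eqref{flem1} the key observation is that $f\geq 0$ and $\Gamma\ast\mu\geq 0$, hence the measure term may simply be dropped and $0\leq f\leq (\nabla_X\Gamma)\ast F_1+\Gamma\ast F_2$. I would then apply Young's convolution inequality, $\|K\ast g\|_{L^p}\leq \|K\|_{L^r}\|g\|_{L^2}$ with $\tfrac{1}{r}=\tfrac{1}{p}+\tfrac{1}{2}$, separately with $K=\nabla_X\Gamma$ and $K=\Gamma$ restricted to the finite time strip $(-\tau,\tau)$. The admissible range $p\in[2,2+1/m)$ reflects the space-time integrability of $\nabla_X\Gamma$ on this strip relative to the homogeneous dimension $Q$, and the blow-up constant $(2+1/m-p)^{-1}$ tracks the divergence of $\|\nabla_X\Gamma\|_{L^r}$ as $r$ approaches the critical borderline.

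For the fractional Sobolev estimate \eqref{flem2} I would retain the $\mu$-term and apply Minkowski's inequality in each of the three convolutions, bounding them by the $L^1_{t,X}W^{\sigma,1}_Y$-norms of $\nabla_X\Gamma$ and $\Gamma$ on $(-\tau,\tau)$ multiplied respectively by $\|F_1\|_{L^1}$, $\|F_2\|_{L^1}$ and $\|\mu\|_{M^1}$. The restriction $\sigma<1/3$ arises from the scaling $Y\sim r^3$: fractional $Y$-derivatives of $\Gamma$ of order $\sigma$ scale with an additional factor $r^{-3\sigma}$, yielding local integrability in the Kolmogorov metric precisely when $3\sigma<1$, and the factor $(1/3-\sigma)^{-1}$ encodes the corresponding borderline divergence.

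The main obstacle, which is in fact the entire analytic content of the lemma, lies in the sharp quantitative kernel bounds with the correct blow-up rates as the critical exponents are approached: the $L^r$ integrability of $\nabla_X\Gamma$ on finite time strips with explicit dependence on $p$, and the $L^1_{t,X}W^{\sigma,1}_Y$ bound on $\Gamma$ with explicit dependence on $\sigma$. These bounds are obtained by direct computation on the explicit Gaussian form of $\Gamma$ and are precisely the content of \cite[Lemma 10]{JC21}, which I would invoke as a black box; the two conclusions of Lemma \ref{flem} then follow from the Young/Minkowski argument outlined above, with the constant $c=c(\tau)$ absorbing the dependence on the length of the time strip.
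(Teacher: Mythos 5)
Your proposal matches the paper's treatment: the paper offers no argument of its own for this lemma and simply asserts that it follows from \cite[Lemma 10]{JC21}, and your Duhamel representation $f=(\nabla_X\Gamma)\ast F_1+\Gamma\ast F_2-\Gamma\ast\mu$ followed by dropping $\Gamma\ast\mu\geq 0$ for the $L^p$ bound (Young) and retaining it for the $W^{\sigma,1}_Y$ bound (Minkowski) is exactly how that cited lemma is proved. One small correction: the Kolmogorov kernel $\Gamma$ is $\delta_r$-homogeneous of degree $-(Q-2)=-4m$, not $-Q=-(4m+2)$ (the latter is the homogeneous dimension), and the binding constraint $\sigma<1/3$ comes from $D_Y^{\sigma}\nabla_X\Gamma$ rather than from $D_Y^{\sigma}\Gamma$; your stated admissible ranges for $p$ and $\sigma$ and the blow-up constants are nonetheless the correct ones.
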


The lemmas stated so far will be sufficient for our proof of Theorem \ref{nthm1} and
Theorem \ref{locbd}.

\subsection{Additional lemmas for the proofs of Theorem \ref{thm3} and Theorem \ref{thm4}}

\begin{lemma}[Weak Poincar\'e inequality]\label{wp}
Let $\epsilon\in(0,1)$ and $\sigma\in(0,\frac{1}{3})$. Then every non-negative weak sub-solution $u$ of \eqref{maineqn} in $Q_5$ in the sense of Definition \ref{wksoldef} satisfies
\begin{equation}\label{wpine}
\Big\|(u-u_{Q_1^{-}})^+\Big\|_{L^1(Q_1^{+})}\leq c\Big(\frac{1}{\epsilon^{m+2}}\|\nabla_X u\|_{L^1(Q_5)}+\epsilon^{\sigma}\big(\frac{1}{3}-\sigma\big)^{-1}\|u\|_{L^2(Q_5)}\Big),
\end{equation}
for some constant $c=c(m,\Lambda)\geq 1$, where $Q_1^{-}:=Q_1(0,0,-1)$
and $u_{Q_1^{-}}:=\frac{1}{|Q_1^{-}|}\int_{Q_1^{-}}u$.
\end{lemma}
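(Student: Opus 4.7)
My plan is to adapt the proof of the analogous weak Poincar\'e inequality in the linear setting given by Guerand--Mouhot in \cite{JC21} to the nonlinear equation \eqref{maineqn}. The two key inputs are the fractional Sobolev estimate \eqref{ngest} of Theorem \ref{nthm1} (which supplies the $(1/3-\sigma)^{-1}$ blow-up in the second term of \eqref{wpine}), and direct $L^1$ control of $\nabla_X u$, which appears in the first term of \eqref{wpine}. I note that by Lemma \ref{sub} the function $(u-u_{Q_1^-})^+$ is also a non-negative weak sub-solution, so we stay inside the same class after the subtraction.

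First I would reduce \eqref{wpine} to a double-integral estimate using
\begin{align*}
\iiint_{Q_1^+}(u(Z_+)-u_{Q_1^-})^+\,\d Z_+ \leq \frac{1}{|Q_1^-|}\iiint_{Q_1^+\times Q_1^-}|u(Z_+)-u(Z_-)|\,\d Z_+\,\d Z_-.
\end{align*}
To expose the $X$-gradient and the fractional $Y$-regularity separately, I would introduce an intermediate mollification $\bar u_\epsilon$ of $u$ at the kinetic scale $\epsilon$, averaged over a cell of $X$-diameter $\sim\epsilon$ and $t$-length $\sim\epsilon^2$ (hence kinetic volume $\sim\epsilon^{m+2}$), and split
\begin{align*}
u(Z_+)-u(Z_-) = [u(Z_+)-\bar u_\epsilon(Y,t)] + [\bar u_\epsilon(Y,t)-\bar u_\epsilon(Y',t')] + [\bar u_\epsilon(Y',t')-u(Z_-)].
\end{align*}

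The first and third pieces are controlled by integrating $\nabla_X u$ along $X$-segments joining $X$ (respectively $X'$) to the base points of the mollifier; after normalizing by the cell volume $\sim\epsilon^{m+2}$ and applying Fubini, these produce the term $c\epsilon^{-(m+2)}\|\nabla_X u\|_{L^1(Q_5)}$. For the middle piece, I would exploit the transport structure of \eqref{maineqn}: differences in $t$ are converted into differences in $Y$ by sliding along free-transport characteristics $(Y,t)\mapsto(Y+X_0(t'-t),t')$ for suitable $X_0$ in the support of the mollifier, and the resulting pure $Y$-difference $\bar u_\epsilon(Y,t)-\bar u_\epsilon(Y'',t)$ is then estimated by the fractional Sobolev regularity \eqref{ngest} of Theorem \ref{nthm1} with $s=\sigma$, yielding the term $c\epsilon^{\sigma}(1/3-\sigma)^{-1}\|u\|_{L^2(Q_5)}$. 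Summing the three contributions gives \eqref{wpine}.

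The main obstacle is the middle step: \eqref{ngest} supplies regularity only in the $Y$ variable, whereas the middle piece involves differences in both $Y$ and $t$. Converting $t$-differences into $Y$-differences along free-transport characteristics requires that $\bar u_\epsilon$ be compatible with the kinetic scaling, which forces the choice of a cell with $X$-diameter $\epsilon$ and $t$-length $\epsilon^2$ rather than a pure $X$-mollifier; this compatibility is precisely what produces the exponent $m+2$. The nonlinearity of $A$ is not a source of additional difficulty: the $X$-gradient is used directly through its $L^1$ norm, while the fractional Sobolev input \eqref{ngest} already absorbs the nonlinearity through Theorem \ref{nthm1}, whose proof only exploits the linear growth of $A$.
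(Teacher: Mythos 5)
Your overall architecture is the same as the paper's, which simply runs the proof of \cite[Proposition 13]{JC21} with Theorem \ref{nthm1} and \eqref{assump2}-$(i)$ as the two inputs: a mollification at kinetic scale $\epsilon$, two pieces controlled by $\epsilon^{-(m+2)}\|\nabla_X u\|_{L^1(Q_5)}$, and a middle piece controlled by the fractional $Y$-regularity \eqref{ngest}. So the plan is the right one, but there is a substantive omission in your treatment of the middle piece. Sliding along the free-transport characteristics does not by itself convert $t$-differences into $Y$-differences, because $\bar u_\epsilon$ is not constant along the flow: one must integrate the sub-solution inequality, tested against the velocity mollifier, along the characteristic. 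This produces (a) a flux term of the form $\iiint A(\nabla_X u,\cdot)\cdot\nabla_X\varphi_\epsilon$, and (b) the non-negative defect measure $\mu$ that makes $u$ a sub-solution rather than a solution. For (a), the integrand is $A(\nabla_X u)$, not $\nabla_X u$; it is precisely the linear-growth hypothesis \eqref{assump2}-$(i)$ --- which the paper's proof explicitly invokes, and which you declare unnecessary beyond Theorem \ref{nthm1} --- that reduces this to $\Lambda|\nabla_X u|$ and folds it into the first term of \eqref{wpine}. For (b), $\mu\geq 0$ can only be discarded in one direction (from past to future), which is why the inequality controls only the positive part $(u-u_{Q_1^-})^+$ and why $Q_1^-$ must lie in the past of $Q_1^+$.

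Related to (b): your opening reduction bounds the left-hand side by the average of $|u(Z_+)-u(Z_-)|$, which throws away exactly the one-sided structure you need; you should keep $(u(Z_+)-u(Z_-))^+$ with $Z_-$ in the past, so that the $-\mu$ contribution arising in the transport step has the favorable sign and can be dropped. None of this changes the shape of the final estimate --- the extra flux term lands in the $\epsilon^{-(m+2)}\|\nabla_X u\|_{L^1(Q_5)}$ budget --- but as written the kinematic conversion in your middle step is not justified, and the one place where the structure of the nonlinearity genuinely enters the proof is missing.
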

\begin{proof}
Using that $u$ is a non-negative weak sub-solution of \eqref{maineqn}, Theorem \ref{nthm1} and the property \ref{assump2}-$(i)$, the conclusion of the lemma follows from the lines of the proof of \cite[Proposition 13, pages 8-10]{JC21}.
\end{proof}

\begin{lemma}[Intermediate value lemma]\label{ivp}
Let $\delta_1,\delta_2\in(0,1)$ be given. Then there exists constants $\theta=c(m,\Lambda)(\delta_1\delta_2)^{10m+15}$, $r_0=\frac{1}{20}$, and $\nu\geq c(m,\Lambda)(\delta_1\delta_2)^{5m+8}$, such that  the following holds. Let $u:Q_1\to\R$ be a weak sub-solution  of \eqref{maineqn} in $Q_5$ in the sense of Definition \ref{wksoldef}, assume that $u\leq 1$ in $Q_\frac{1}{2}$, and that
\begin{equation}\label{gc1}
|\{u\leq 0\}\cap Q_{r_0}^{-}|\geq\delta_1|Q_{r_0}^{-}|\quad\text{and}\quad|\{u\geq 1-\theta\}\cap Q_{r_0}|\geq\delta_2|Q_{r_0}|,
\end{equation}
where $Q_{r_0}^{-}:=Q_{r_0}(0,0,-2r_0^{2})$. Then
\begin{equation}\label{ivpr}
\Big|\{0<u<1-\theta\}\cap Q_\frac{1}{2}\Big|\geq\nu|Q_\frac{1}{2}|.
\end{equation}
\end{lemma}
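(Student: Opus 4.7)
My plan is to follow De Giorgi's isoperimetric three-set strategy, arguing by contradiction. Suppose $|\{0<u<1-\theta\}\cap Q_{1/2}|<\nu|Q_{1/2}|$ for $\theta,\nu$ to be chosen as suitable powers of $\delta_1\delta_2$. By Lemma \ref{sub}, $u^+$ is a non-negative weak sub-solution of \eqref{maineqn} on $Q_5$, and $u\leq 1$ on $Q_{1/2}$ gives $u^+\leq 1$ on $Q_{5r_0}=Q_{1/4}\subset Q_{1/2}$. The first condition in \eqref{gc1} becomes $|\{u^+=0\}\cap Q_{r_0}^-|\geq \delta_1|Q_{r_0}^-|$, which together with $u^+\leq 1$ forces $(u^+)_{Q_{r_0}^-}\leq 1-\delta_1$; the second provides a subset of $Q_{r_0}$ of measure at least $\delta_2|Q_{r_0}|$ on which $u^+\geq 1-\theta$. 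I would then apply a rescaled and time-translated form of the weak Poincar\'e inequality (Lemma \ref{wp}) to $u^+$ on the triple $(Q_{r_0}^-, Q_{r_0}, Q_{5r_0})$, and require $\theta\leq \delta_1/2$, to arrive at the lower bound
$$\tfrac{\delta_1\delta_2}{2}|Q_{r_0}|\leq c\,\epsilon^{-(m+2)}\|\nabla_X u^+\|_{L^1(Q_{5r_0})} + c\,\epsilon^{\sigma},$$
where $\sigma\in(0,1/3)$ is still free and I have used $u^+\leq 1$ to absorb $\|u^+\|_{L^2(Q_{5r_0})}$ into the constant.

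Next, the crux is to bound $\|\nabla_X u^+\|_{L^1(Q_{5r_0})}$ via the smallness of the intermediate set. I would split
$$\|\nabla_X u^+\|_{L^1(Q_{5r_0})}=\int_{\{0<u<1-\theta\}\cap Q_{5r_0}}|\nabla_X u|+\int_{\{u\geq 1-\theta\}\cap Q_{5r_0}}|\nabla_X u|.$$
For the first summand, the Cauchy-Schwarz inequality combined with the Caccioppoli-type energy estimate Lemma \ref{eng1} applied to $u^+$ on an inner pair of cylinders yields a bound of order $\nu^{1/2}$. For the second, I would exploit that $(u-(1-\theta))^+$ is itself a non-negative weak sub-solution (Lemma \ref{sub}), pointwise $\leq \theta$ on $Q_{1/2}$; applying Lemma \ref{eng1} to it and then Cauchy-Schwarz gives an $L^1$-gradient bound of order $\theta$. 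The Poincar\'e inequality above then reduces to
$$\tfrac{\delta_1\delta_2}{2}|Q_{r_0}|\leq c\,\epsilon^{-(m+2)}(\nu^{1/2}+\theta)+c\,\epsilon^{\sigma}.$$

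Finally, to close the argument I would fix $\sigma\in(0,1/3)$, choose $\epsilon$ as a power of $\delta_1\delta_2$ so that $c\,\epsilon^{\sigma}$ absorbs at most a quarter of the left-hand side, and then constrain $\nu^{1/2}$ and $\theta$ to each be smaller than an explicit power of $\delta_1\delta_2$. Carefully optimizing the exponents (and balancing the $\nu^{1/2}$ and $\theta$ contributions) is expected to yield precisely the exponents $5m+8$ for $\nu$ and $10m+15$ for $\theta$ stated in the lemma, contradicting the starting assumption. The main obstacle is the control of $\nabla_X u$ on the upper-level set $\{u\geq 1-\theta\}$, where the Cauchy-Schwarz trick that works on the small intermediate set does not directly apply; the key resolution is to re-apply the energy estimate to the further truncation $(u-(1-\theta))^+$, transferring its $L^\infty$-smallness $\leq\theta$ into an $L^2$-smallness of its gradient. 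This is precisely the mechanism that forces $\theta$ to appear in the final statement with a much higher power of $\delta_1\delta_2$ than $\nu$.
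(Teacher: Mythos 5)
Your proposal follows essentially the same route as the paper, which simply invokes Lemma \ref{eng1}, Lemma \ref{sub} and Lemma \ref{wp} and defers to the proof of Theorem 3 in \cite{JC21}: a contradiction argument built on the weak Poincar\'e inequality, with the gradient term split over the intermediate set (handled by Cauchy--Schwarz, the energy estimate and the smallness hypothesis, giving $O(\nu^{1/2})$) and over the upper level set (handled by applying Lemma \ref{sub} and Lemma \ref{eng1} to the further truncation $(u-(1-\theta))^+$, whose $L^\infty$-bound $\theta$ on $Q_{1/2}$ yields the $O(\theta)$ gradient bound). The only parts you leave unchecked are routine but not automatic: the weak Poincar\'e inequality must be rescaled to the triple $(Q_{r_0}^-,Q_{r_0},Q_{5r_0})$, whose relative time positions differ slightly from those of $(Q_1^-,Q_1^+,Q_5)$ in Lemma \ref{wp}, and the final choice of $\sigma$ and $\epsilon$ determines whether one lands exactly on the exponents $10m+15$ and $5m+8$ rather than merely on some explicit polynomial powers of $\delta_1\delta_2$.
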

\begin{proof}
Using Lemma \ref{eng1}, Lemma \ref{sub} and Lemma \ref{wp}, the result follows from the lines of the proof of \cite[Theorem 3, pages 11-12]{JC21}.
\end{proof}

\begin{lemma}[Measure to pointwise upper bound]\label{mtp}
Given $\delta\in(0,1)$ and $r_0=\frac{1}{20}$, there exists a positive constant $\gamma:=\gamma(\delta)=c(m,\Lambda)\delta^{2(1+\delta^{-10m-16})}>0$ such that  the following holds.  Let $u$ be a weak sub-solution of \eqref{maineqn} in $Q_1$ in the sense of Definition \ref{wksoldef}, assume that $u\leq 1$ in $Q_\frac{1}{2}$ and that
\begin{equation}\label{mtpgc}
|\{u\leq 0\}\cap Q_{r_0}^{-}|\geq\delta|Q_{r_0}^{-}|,
\end{equation}
where $Q_{r_0}^{-}:=Q_{r_0}(0,0,-2r_0^{2})$. Then
$$\mbox{$u\leq 1-\gamma$ in $Q_\frac{r_0}{2}$}.$$
\end{lemma}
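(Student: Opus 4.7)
The strategy is a De Giorgi-style iteration combining the intermediate value Lemma \ref{ivp} with the local boundedness Theorem \ref{locbd}. The idea is that repeatedly applying Lemma \ref{ivp} to a suitably rescaled sequence of sub-solutions forces the measure of the super-level sets at height $1-\theta$ to shrink; once it shrinks below a definite threshold, Theorem \ref{locbd} converts this smallness into the desired pointwise bound.

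First I fix the constants $\theta,\nu$ produced by Lemma \ref{ivp} with $\delta_1=\delta$ and with $\delta_2\in(0,1)$ a constant depending only on $m,\Lambda$ whose value will be pinned down at the very end. Then I set
$$
u_0:=u,\qquad u_{k+1}:=\theta^{-1}\bigl(u_k-(1-\theta)\bigr),\quad k\geq 0,
$$
so that $u_k=\theta^{-k}(u-(1-\theta^k))$. Combining Remark \ref{wksolrmk}$(i)$ (invariance under constant shifts) with the positive homogeneity property \eqref{assump2}$(iii)$ of $A$ (which makes the sub-solution property invariant under multiplication by positive constants), each $u_k$ is again a weak sub-solution of \eqref{maineqn}. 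An immediate induction shows $u_k\leq 1$ in $Q_{1/2}$, and since $\{u\leq 0\}\subset\{u\leq 1-\theta^k\}=\{u_k\leq 0\}$ the hypothesis $|\{u_k\leq 0\}\cap Q_{r_0}^{-}|\geq \delta|Q_{r_0}^{-}|$ is preserved throughout.

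Next I iterate Lemma \ref{ivp} for as long as its upper density condition is satisfied. Whenever $|\{u_k\geq 1-\theta\}\cap Q_{r_0}|\geq \delta_2|Q_{r_0}|$, Lemma \ref{ivp} gives
$$
|\{0<u_k<1-\theta\}\cap Q_{1/2}|\geq \nu|Q_{1/2}|,
$$
and since $\{u_{k+1}\leq 0\}=\{u_k\leq 1-\theta\}$ is the disjoint union of $\{u_k\leq 0\}$ and $\{0<u_k<1-\theta\}$, we obtain
$$
|\{u_{k+1}\leq 0\}\cap Q_{1/2}|\geq |\{u_k\leq 0\}\cap Q_{1/2}|+\nu|Q_{1/2}|.
$$
Since the left-hand side cannot exceed $|Q_{1/2}|$, the process must halt at some step $k^{*}\leq 1+1/\nu$; at that step $|\{u_{k^{*}}\geq 1-\theta\}\cap Q_{r_0}|<\delta_2|Q_{r_0}|$. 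Finally, I apply Theorem \ref{locbd} (with $p=2$) to the non-negative weak sub-solution $w:=(u_{k^{*}}-(1-\theta))^{+}$, which is a sub-solution by Lemma \ref{sub} together with Remark \ref{wksolrmk}$(i)$. Because $w\leq\theta$ on $Q_{r_0}$ and $\{w>0\}\cap Q_{r_0}$ has measure at most $\delta_2|Q_{r_0}|$, we have $\|w\|_{L^2(Q_{r_0})}\leq\theta\sqrt{\delta_2|Q_{r_0}|}$, whence
$$
\sup_{Q_{r_0/2}}w\leq c_0\,\theta\sqrt{\delta_2|Q_{r_0}|}
$$
for some $c_0=c_0(m,\Lambda)$. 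Choosing $\delta_2=\delta_2(m,\Lambda)$ so small that $c_0\sqrt{\delta_2|Q_{r_0}|}\leq 1/2$ yields $u_{k^{*}}\leq 1-\theta/2$ in $Q_{r_0/2}$; unwinding the definition of $u_{k^{*}}$ gives $u\leq 1-\theta^{k^{*}+1}/2$ in $Q_{r_0/2}$, so the lemma holds with $\gamma:=\theta^{k^{*}+1}/2\geq \theta^{2+1/\nu}/2$.

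The main obstacle, beyond the bookkeeping, is the careful quantitative tracking required to reach the precise exponent $2(1+\delta^{-10m-16})$ claimed for $\gamma(\delta)$. Since $\delta_2$ is fixed as an absolute constant by the final local-boundedness step, the exponent in $\gamma$ is controlled by the product of the number of iterations $k^{*}\sim 1/\nu$ and the polynomial rate at which $\theta$ shrinks in $\delta$, so one must substitute the quantitative expressions $\theta\sim(\delta\,\delta_2)^{10m+15}$ and $\nu\gtrsim(\delta\,\delta_2)^{5m+8}$ from Lemma \ref{ivp} and propagate them through the geometric factor $\theta^{1/\nu}$. A secondary, more technical point is reconciling the ambient domain ($Q_1$ in the statement) with the ambient domain required by Lemma \ref{ivp} ($Q_5$), which is handled by a harmless rescaling together with the observation that $Q_{r_0}^{-}\subset Q_{1/2}$ for $r_0=1/20$.
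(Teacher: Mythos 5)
Your proof is correct and is essentially the argument the paper itself invokes: the paper's proof of Lemma \ref{mtp} consists of citing Remark \ref{wksolrmk}, Theorem \ref{locbd} and Lemma \ref{ivp} and deferring to \cite[Lemma 16]{JC21}, and your iteration scheme (shift-and-rescale the sub-solution, apply the intermediate-value lemma until the upper level set at height $1-\theta$ has small measure, then conclude via local boundedness with $p=2$) is exactly that argument written out in full. The only caveats are minor and honestly flagged by you: the $Q_1$-versus-$Q_5$ ambient-domain mismatch is already present in the paper's own statements of Lemma \ref{ivp} and Lemma \ref{mtp}, and the precise exponent $2(1+\delta^{-10m-16})$ in $\gamma(\delta)$ requires the constant-tracking through $\theta^{1/\nu}$ that you describe but do not fully carry out.
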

\begin{proof}
Using Remark \ref{wksolrmk}, Theorem \ref{locbd} and Lemma \ref{ivp}, the result follows by proceeding along the lines of the proof of \cite[Lemma 16, page 12]{JC21}.
\end{proof}

\section{Proof of Theorem \ref{nthm1}-Theorem \ref{thm4}} \label{sec4}

In this section we prove Theorem \ref{nthm1}-Theorem \ref{thm4}. We first note that since our class of operators  is closed under the  group law defined in \eqref{e70}, and by our definition of $Q_{r_0}(Z_0,t_0)$, we can throughout the second without loss of generality assume that $(Z_0,t_0)=0$. Note that
$Q_{r_0}=Q_{r_0}(0,0)=V_X\times V_Y\times J$ where $V_X=B(0,r_0)$, $V_Y=B(0,r_0^3)$, $J=(-r_0^2,0)$, and where $B(0,\rho)$ denotes the standard Euclidean ball with center at $0$ and radius $\rho$ in $\mathbb R^m$.

\subsection{Proof of Theorem \ref{nthm1}} As discussed in subsection \ref{proof}, since $u$ is a weak sub-solution of \eqref{maineqn}, there exists a non-negative measure $\bar{\mu}$ such that
$$
(\partial_t+X\cdot\nabla_Y)u=\nabla_X\cdot(A(\nabla_X u,X,Y,t))-\bar{\m}.
$$
We define $r_2:=\frac{r_0+r_1}{2}$. Let $\phi_1\in [0,1]$ be smooth such that $\phi_1\equiv 1$ in $Q_{r_1}(Z_0,t_0)$ and $\phi_1\equiv 0$ outside $Q_{r_2}(Z_0,t_0)$ satisfying
\begin{equation}\label{psi}
|\nabla_X\phi_1|\leq\frac{c}{r_0- r_2},\quad |\nabla_Y\phi_1|\leq\frac{c}{(r_0-r_{2})r_{2}^2},\quad |\partial_t\phi_1|\leq\frac{c}{(r_0-r_{2})r_{2}},
\end{equation}
for some constant $c=c(m)\geq 1$. Then we observe that $v=u\phi_1$ is a weak solution of
\begin{equation}\label{nsol}
(\partial_t+X\cdot\nabla_Y-\Delta_X)v=\nabla_X\cdot F_1+F_2-\mu\quad\text{ in }\quad\R^{N+1},
\end{equation}
where

$$
F_1=A(\nabla_X u)\phi_1-\phi_1\nabla_X u-u\nabla_X\phi_1,
$$

$$
F_2=-A(\nabla_X u)\cdot\nabla_X\phi_1+u(\partial_t+X\cdot\nabla_Y)\phi_1\quad \text{ and }\quad
\m=\bar{\m}\phi_1.
$$
By Lemma \ref{flem}, we have
\begin{equation}\label{flemap1}
\|v\|_{L^q(\R^{2m}\times \R_-)}\leq c\Big(2+\frac{1}{m}-q\Big)^{-1}(\|F_1\|_{L^2(\R^{2m}\times \R_-)}+\|F_2\|_{L^2(\R^{2m}\times \R_-)})
\end{equation}
and
\begin{equation}\label{flemap2}
\|v\|_{L_{t,X}^1 W_Y^{s,1}(\R^{2m}\times \R_-)}\leq c\Big(\frac{1}{3}-s\Big)^{-1}(\|F_1\|_{L^1(\R^{2m}\times \R_-)}+\|F_2\|_{L^1(\R^{2m}\times \R_-)}+\|\mu\|_{M^1(\R^{2m}\times \R_-)}),
\end{equation}
for some uniform constant $c$ and for every $q\in[2,2+\frac{1}{m})$ and $s\in[0,\frac{1}{3})$.
Using \eqref{psi}, \eqref{assump2}-$(i)$, Lemma \ref{eng1} and that $0<r_1<r_0\leq 1$, it follows that
\begin{equation}\label{f12}
\|F_1\|_{L^2(\R^{2m}\times \R_-)}+\|F_2\|_{L^2(\R^{2m}\times \R_-)}\leq c_1,
\end{equation}
where
\begin{equation}\label{c1}
c_1=c(m,\Lambda)\Big(1+\frac{1}{r_0-r_1}\Big)\Big(1+\frac{1}{(r_0-r_1)^2}+\frac{|X_0|+r_0}{(r_0-r_1)r_1^{2}}+\frac{1}{(r_0-r_1)r_1}\Big).
\end{equation}
Using \eqref{f12} in \eqref{flemap1}, the estimate \eqref{hint2} follows.
To obtain the estimate \eqref{ngest}, let $\phi_2\in [0,1]$ be smooth such that $\phi_2\equiv 1$ in $Q_{r_2}(Z_0,t_0)$ and $\phi_2\equiv 0$ outside $Q_{r_0}(Z_0,t_0)$ satisfying \eqref{psi}. Choosing $\phi_2$ as a test function in \eqref{nsol} and proceeding similarly as in the proof of energy estimate in Lemma \ref{eng1}, we get
$$
\|\mu\|_{M^1(Q_{r_2}(Z_0,t_0))}\leq \|\phi_2\mu\|_{M^1(\R^{2m}\times \R_-)}\leq r_0^{1+2m}c_1\|u\|_{L^2(Q_{r_0}(Z_0,t_0))},
$$
where $c_1$ is given by \eqref{c1}. The last estimate, combined with \eqref{flemap2} and \eqref{f12},
yields the estimate \eqref{ngest}. \qed
\subsection{Proof of Theorem \ref{locbd}} As mentioned, we can, without loss of generality, assume that $(Z_0,t_0)=(0,0,0)$. For $n\in\N\cup\{0\}$, we define
$$
r_n=r_\infty+(r_0-r_\infty)2^{-n},\quad T_n=-r_n^{2},\quad k_n=\frac{1}{2}(1-2^{-n}),\quad u_n=(u-k_n)^{+},
$$
and
$$
A_n:=\sup_{t\in (T_n,0)}\iint_{B(0,r_n)\times B(0,r_n^3) }u_n^{2}(\cdot,\cdot,t)\,\d X \d Y.
$$
By Lemma \ref{sub} we know that $u_n$ is a weak sub-solution of \eqref{maineqn}. Thus applying Lemma \ref{eng1} we obtain
\begin{align}\label{thm1ap}
A_n&\leq c\,c_{n-1,n}\iiint_{Q_{r_{n-1}}}u_n^{2}\,\d X \d Y \d t,\quad \forall n\geq 1,
\end{align}
where $c=c(m,\Lambda)\geq 1$ and
\begin{equation}\label{cnn}
c_{n-1,n}:=\frac{1}{(r_{n-1}-r_n)^2}+\frac{r_{n-1}}{(r_{n-1} -r_{n})r_n^{2}}+\frac{1}{(r_{n-1} -r_{n})r_n}+1\leq \frac{2^{2n}}{r_\infty^{2}(r_0-r_\infty)^2}.
\end{equation}
Now we will estimate the integral in the right hand side of \eqref{thm1ap}. Let $q=2+\frac{1}{2m}$. By H\"older's inequality we have
\begin{equation}\label{thm1ap1}
\begin{split}
\iiint_{Q_{r_{n-1}}}u_n^{2}\,\d X \d Y \d t&\leq \Big(\iiint_{Q_{r_{n-1}}}u_n^{q}\,\d X \d Y \d t\Big)^\frac{2}{q}\Big|\{u_n>0\}\cap Q_{r_{n-1}}\Big|^{1-\frac{2}{q}}.
\end{split}
\end{equation}
Since $k_n>k_{n-1}$, we get $u_n\leq u_{n-1}$. Using this fact, that $0<r_\infty<r_0\leq 1$, and Theorem \ref{nthm1}, we get
\begin{equation}\label{hghap1}
\begin{split}
\Big(\iiint_{Q_{r_{n-1}}}u_{n}^q\,\d X \d Y \d t\Big)^\frac{2}{q}&\leq \Big(\iiint_{Q_{r_{n-1}}}u_{n-1}^q\,\d X \d Y \d t\Big)^\frac{2}{q}\\
&\leq c^2\,A_{n-2}\\
&\leq\Big(\frac{c(m,\Lambda)2^{3n}}{r_\infty^{2}(r_0-r_\infty)^3}\Big)^2\,A_{n-2},
\end{split}
\end{equation}
for every $n\geq 2$, where we have used that
$$
c=c(m,\Lambda)\Big(1+\frac{1}{r_{n-2}-r_{n-1}}\Big)c_{n-2,n-1}\leq \frac{c(m,\Lambda) 2^{3n}}{r_\infty^{2}(r_0-r_\infty)^3},
$$
with $c_{n-2,n-1}$ is as defined in \eqref{cnn}.

Next, we observe that
\begin{align*}
\iiint_{Q_{r_{n-1}}}u_{n-1}^2\,\d X \d Y \d t&\geq \iiint_{\{u_{n-1}\geq 2^{-n-1}\}\cap Q_{r_{n-1}}}u_{n-1}^2\,\d X \d Y \d t\\
&\geq 2^{-2n-2}\Big|\{u_{n-1}\geq 2^{-n-1}\}\cap Q_{r_{n-1}}\Big|.
\end{align*}
Moreover,
$$
\Big|\{u_n> 0\}\cap Q_{r_{n-1}}\Big|\leq\Big|\{u_n\geq k_n-k_{n-1}\}\cap Q_{r_{n-1}}\Big|=\Big|\{u_n\geq 2^{-n-1}\}\cap Q_{r_{n-1}}\Big|.
$$
Combining the preceding two estimates and using $0<r_\infty<r_0\leq 1$, we get
\begin{equation}\label{thm1ap2}
\Big|\{u_n>0\}\cap Q_{r_{n-1}}\Big|\leq 2^{2n+2} A_{n-1}.
\end{equation}
Using the estimates \eqref{hghap1} and \eqref{thm1ap2} in \eqref{thm1ap1}, we get
\begin{equation}\label{thm1ap3}
\iiint_{Q_{r_{n-1}}}u_n^{2}\,\d X \d Y \d t\leq c(m,\Lambda)\Big(\frac{2^{4n}}{r_{\infty}^2(r_0-r_{\infty})^3}\Big)^2 A_{n-2}^{2-\frac{2}{q}},\quad\forall n\geq 2,
\end{equation}
where we have also used that $A_{n-1}\leq A_{n-2}$. Note that the latter is true since $u_{n-1}\leq u_{n-2},\,r_{n-1}< r_{n-2}$ and $T_{n-2}<T_{n-1}$ for every $n\geq 2$.
Using \eqref{thm1ap3} in \eqref{thm1ap}, we obtain
$$
A_n\leq c(m,\Lambda)\frac{2^{12n}}{r_{\infty}^6 (r_0-r_{\infty})^8}A_{n-2}^{\alpha},
$$
where $\alpha=2-\frac{2}{q}>1$, since $q>2$. Therefore, defining $S_n:=A_{2n}$, we get
$$
S_n\leq \beta^n S_{n-1}^{\alpha}\quad\forall n\geq 1,
$$
where
$$
\beta=c(m,\Lambda)\frac{2^{24}}{r_{\infty}^6 (r_0-r_{\infty})^8}.
$$
Recursively we get
\begin{equation}\label{itsc}
\begin{split}
S_n&\leq \beta^{n+(n-1)\alpha+\ldots+\alpha^{n-1}}S_1^{\alpha^{n-1}}\\
&\leq \Big(\beta^\frac{\alpha^2}{(\alpha-1)^2}S_1\Big)^{\alpha^{n-1}}\\
&\leq \Big(c(m,\Lambda)c_{0,1}\beta^\frac{\alpha^2}{(\alpha-1)^2}\|u\|^{2}_{L^2(Q_{r_0})}\Big)^{\alpha^{n-1}},
\end{split}
\end{equation}
where we have used \eqref{thm1ap} and the estimate
$$
n+\alpha(n-1)+\ldots+\alpha^{n-1}\leq\frac{\alpha^{n+1}}{(\alpha-1)^2}.
$$
Let
$$
v:=\frac{1}{\sqrt{2c(m,\Lambda)c_{0,1}\,\beta^\frac{\alpha^2}{(\alpha-1)^2}}}\frac{u}{\|u\|_{L^2(Q_{r_0})}}.
$$
We observe that
$$
\gamma:=c(m,\Lambda)c_{0,1}\,\beta^\frac{\alpha^2}{(\alpha-1)^2}\|v\|_{L^2(Q_{r_0})}^2=\frac{1}{2}<1.
$$
Note that, by the property (ii) in Remark \ref{wksolrmk}, $v$ is again a weak sub-solution of \eqref{maineqn}. Thus the estimate \eqref{itsc} holds by replacing $u$ with $v$. This fact combined with $\gamma<1$ gives $v\leq \frac{1}{2}$ a.e. in $Q_{r_\infty}$. As a consequence we get
$$
\sup_{Q_{r_\infty}}\,u\leq {\sqrt{2c(m,\Lambda)c_{0,1}\,\beta^\frac{\alpha^2}{(\alpha-1)^2}}}\, {\|u\|_{L^2(Q_{r_0})}}\leq c\Big(\frac{1}{r_{\infty}^2 (r_0-r_{\infty})^3}\Big)^\frac{\theta}{2}{\|u\|_{L^2(Q_{r_0})}},
$$
for some $c=c(m,\Lambda)\geq 1$ and $\theta=\theta(m)>1$.
Now, arguing similarly as in the proof of \cite[Proposition 12, pages 7-8]{JC21}, the result follows. \qed

\subsection{Proof of Theorem \ref{thm3}} Using Remark \ref{wksolrmk}, Theorem \ref{locbd} along with Lemma \ref{mtp}, and following the lines of the proof of \cite[Theorem 5, pages 13-14]{JC21}, the result follows. \qed

\subsection{Proof of Theorem \ref{thm4}} Using Remark \ref{wksolrmk}, Lemma \ref{mtp}, and following the lines of the proof of \cite[Theorem 7, pages 14-15]{JC21}, the result follows. \qed

\section{Proof of Theorem \ref{weakdp1}} \label{sec5}
The purpose of the section is to prove Theorem \ref{weakdp1}. As $g\in W(U_X\times V_{Y,t})$ we can in the following assume, without loss of generality, that $g\equiv 0$.

 In domains of the form $U_X\times U_Y\times I$ instead of $U_X\times V_{Y,t}$, one may attempt different approaches to prove Theorem \ref{weakdp1}, and perhaps the most natural first approach is  to add the term $\epsilon\Delta_Y$ to the operator and to instead consider the problem
\begin{equation}\label{dpweakvisbonn}
\begin{cases}
	\nabla_X\cdot(A(\nabla_Xu_\epsilon,X,Y,t))+\epsilon\Delta_Yu_\epsilon-(\partial_t+X\cdot\nabla_Y)u_\epsilon =g^*  &\text{in} \ U_X\times U_Y\times I, \\
u_\epsilon=0 & \text{on} \ \partial_p(U_X\times U_Y\times I).
\end{cases}
\end{equation}
Here $\partial_p(U_X\times U_Y\times I)$ is now the (standard) parabolic boundary of $U_X\times U_Y\times I$ , i.e.,
$$\partial_p(U_X\times U_Y\times I):=(\partial(U_X\times U_Y)\times \overline{I})\cup ((U_X\times U_Y)\times \{0\}).$$
The existence and uniqueness of weak solutions to \eqref{dpweakvisbonn} is classical and one easily deduces that
\begin{align}
&\||\nabla_Xu_\epsilon|\|^2_{L^2(U_X\times U_Y\times I)}+\epsilon \||\nabla_Yu_\epsilon|\|^2_{L^2(U_X\times U_Y\times I)}\notag\\
&\leq c
\|g^*\|_{L_{Y,t}^2(U_Y\times I,{H}_X^{-1}(U_X))}\times\||u_\epsilon|+|\nabla_Xu_\epsilon|\|_{L^2(U_X\times U_Y\times I)},
\end{align}
for some positive constant $c$, independent of $\epsilon$. By the standard Poincar{\'e} inequality, applied on $U_X$ to $u_\epsilon(\cdot,Y,t)$ with $(Y,t)$ fixed, we have
\begin{align}
\|u_\epsilon\|_{L^2(U_X\times U_Y\times I)}\leq c\||\nabla_Xu_\epsilon|\|_{L^2(U_X\times U_Y\times I)}.
\end{align}
Hence, using Cauchy-Schwarz we can conclude that
\begin{align}
&\|u_\epsilon\|^2_{L^2(U_X\times U_Y\times I)}+\||\nabla_Xu_\epsilon|\|^2_{L^2(U_X\times U_Y\times I)}+\epsilon \||\nabla_Yu_\epsilon|\|^2_{L^2(U_X\times U_Y\times I)}\notag\\
&\leq
c\|g^*\|^2_{L_{Y,t}^2(U_Y\times I,{H}_X^{-1}(U_X))},
\end{align}
for a constant $c$ which is independent of $\epsilon$. The idea is then  to let
$\epsilon\to 0$ and in this way construct a solution to the problem in \eqref{weak3}. To make this operational, already in the linear case, $A(\xi,X,Y,t)=A(X,Y,t)\xi$, one seems to need some uniform estimates up to the Kolmogorov boundary $\partial_\K(U_X\times U_Y\times I)$ to get a solution in the limit. In addition, in the nonlinear case considered in this paper we also need to ensure that $\nabla_Xu_\epsilon\to \nabla_Xu$ pointwise a.e as $\epsilon\to 0$ and how to achieve this is even less clear. One approach is to try to adapt the techniques of Boccardo and Murat \cite{BM92} but it seems unclear how to make this approach operational in our case due to the presence of the term $\epsilon\Delta_Yu_\epsilon$ in the approximating equation.

In this paper we will instead prove Theorem \ref{weakdp1} by using a variational approach recently explored in Albritton-Armstrong-Mourrat-Novack \cite{AAMN} and Litsg{\aa}rd-Nystr{\"o}m \cite{LN}. We will prove that the solution to \eqref{dpweak+} can be obtained as the minimizer of a uniformly convex functional. The fact that a parabolic equation can be cast as the first variation of a uniformly convex integral functional was first discovered by Brezis-Ekeland \cite{BE2,BE1} and for a modern treatment of this approach, covering uniformly elliptic parabolic equations of second order in the more general context of uniformly monotone operators, we refer to \cite{ABM}  which in turn is closely related to \cite{ghoussoub-tzou},  see also \cite{GBook}.

\subsection{Variational representation of the symbol}
To make the approach operational we will use a variational representation of the mapping $\xi \mapsto  A(\xi,X,Y,t)$, for each $(X,Y,t)\in \R^{N+1}$, that we learned from \cite{ABM} and \cite{AM16} and we refer to these papers for more  background. Indeed, by \cite[Theorem~2.9]{AM16}, there exists $\tilde A \in L^\infty_{\mathrm{loc}}(\mathbb R^m\times \mathbb R^m\times \R^{N+1})$ satisfying the following properties, for $\Gamma := 2\Lambda + 1$ and for each $(X,Y,t)\in \R^{N+1}$. First, the mapping
\begin{equation}
\label{e.A.unif.convex}
(\xi,\eta) \mapsto \tilde A(\xi,\eta,X,Y,t) - \frac 1 {2\Gamma}(|\xi|^2 + |\eta|^2) \quad \text{is convex}.
\end{equation}
Second, the mapping
\begin{equation}
\label{e.A.C11}
(\xi,\eta) \mapsto \tilde A(\xi,\eta,X,Y,t) -  \frac {\Gamma} 2(|\xi|^2 + |\eta|^2)  \quad \text{is concave}.
\end{equation}
Third, for every $\xi,\eta \in \mathbb R^{m}$, we have
\begin{equation}
\label{e.A.ineq}
\tilde A(\xi,\eta,X,Y,t)\ge \xi\cdot \eta,
\end{equation}
and
\begin{equation}
\label{e.A.eqiff}
\tilde A(\xi,\eta,X,Y,t) = \xi\cdot \eta
\iff
\eta= A(\xi,X,Y,t).
\end{equation}
Note that the choice of $\tilde A$ is in general not unique. Note also that \eqref{e.A.unif.convex} and \eqref{e.A.C11} imply, in particular that
\begin{align}\label{conseq}
\frac 1{2\Gamma}|\xi_1-\xi_2|^2&\leq \frac 1 2\tilde A(\xi_1,\eta,X,Y,t)+\frac 1 2\tilde A(\xi_2,\eta,X,Y,t)\notag\\
&-\tilde A(\frac 1 2\xi_1+\frac 1 2\xi_2,\eta,X,Y,t)\leq \frac {\Gamma} 2|\xi_1-\xi_2|^2.
\end{align}

\subsection{Setting up the argument} To ease the notation we will in the following at instances use the notation
$$W:=W(U_X\times V_{Y,t}),\quad W_0:=W_{0}(U_X\times V_{Y,t}),$$
and we let
$$\L u:=\nabla_X\cdot(A(\nabla_X u,X,Y,t))-(\partial_t+X\cdot\nabla_Y)u.$$
Given an arbitrary pair $(f,\j)$ such that
 \begin{eqnarray}\label{weak1-a}
\quad f\in L_{Y,t}^2(V_{Y,t},H_X^1(U_X))\quad\mbox{ and }\quad  \j\in L^2(V_{Y,t},L^2(U_X)))^m,
    \end{eqnarray}
    we introduce
\begin{equation}  \label{e.onnb}
\J[f,\j] := \iiint_{U_X\times V_{Y,t}} (\tilde A(\nabla_X f,\j,X,Y,t)-\nabla_Xf\cdot\j) \, \d X \d Y \d t.
\end{equation}
Using this notation, and given an arbitrary pair $(f,f^\ast)$ such that
 \begin{eqnarray}\label{weak1-b}
\quad\quad\quad f\in L_{Y,t}^2(V_{Y,t},H_X^1(U_X))\quad\mbox{ and }\quad f^\ast,\ f^\ast+(\partial_t+X\cdot\nabla_Y)f\in  L_{Y,t}^2(V_{Y,t},H_X^{-1}(U_X)),
    \end{eqnarray}
     we set
     \begin{equation}
\label{e.def.J}
J[f,f^*] := \inf \iiint_{U_X\times V_{Y,t}} \J[f,\g] \, \d X \d Y \d t,
\end{equation}
where the infimum is taken with respect to the set
\begin{equation}
\label{e.def.J+}
\bigl\{ \g  \in (L^2(V_{Y,t},L^2(U_X)))^m \mid {\nabla_X \cdot \g} = f^* +(\partial_t+X\cdot\nabla_Y)f \bigr\}.
\end{equation}
The condition
\begin{equation*}  
{\nabla_X \cdot \g}  = f^* +(\partial_t+X\cdot\nabla_Y)f,
\end{equation*}
appearing in \eqref{e.def.J+}, should be interpreted as stating that
\begin{equation}
\label{e.g.condition}
- \iiint_{U_X\times V_{Y,t}}\g  \cdot \nabla_X \phi \, \d X \d Y \d t = \iint_{V_{Y,t}} \langle f^*(\cdot,Y,t) +(\partial_t+X\cdot\nabla_Y)f(\cdot,Y,t), \phi\rangle \, \d Y\d t,
\end{equation}
for all $\phi \in L^2(V_{Y,t},H^1_{X,0}(U_X))$. Finally, for $g^*\in L_{Y,t}^2(V_{Y,t},{H}_X^{-1}(U_X))$ fixed we introduce
    \begin{equation}\label{setbonn}   \mathcal{A}(g^\ast):=\{ (f, \j) \in  W_{0}\times( L^2(V_{Y,t},L^2(U_X)))^m \mid \nabla_X\cdot\j = g^\ast+(\partial_t+X\cdot\nabla_Y)f \}.
\end{equation}

\subsection{$\J$ is uniformly convex on $\mathcal{A}(g^*)$}

\begin{lemma}\label{1stlemma}
    Let $g^*\in L_{Y,t}^2(V_{Y,t},{H}_X^{-1}(U_X))$ be fixed and let $\mathcal{A}(g^\ast)$ be the set introduced in \eqref{setbonn}. Then $\mathcal{A}(g^\ast)$ is non-empty.
\end{lemma}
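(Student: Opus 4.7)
The plan is to exhibit an explicit element of $\mathcal{A}(g^*)$. The natural ansatz is to take $f\equiv 0$, which belongs to $W_0$ since $0 \in C^\infty_{\K,0}(\overline{U_X\times V_{Y,t}})$. With this choice $(\partial_t+X\cdot\nabla_Y)f=0$, so it suffices to construct $\j\in (L^2(V_{Y,t},L^2(U_X)))^m$ such that $\nabla_X\cdot\j = g^*$ in the sense of \eqref{e.g.condition}. The idea is to solve, slice by slice in $(Y,t)$, a Dirichlet problem for the $X$-Laplacian and take $\j$ to be its $X$-gradient.

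More precisely, I would proceed as follows. For a.e.\ $(Y,t)\in V_{Y,t}$, equip $H^1_{X,0}(U_X)$ with the inner product $(u,v)\mapsto \int_{U_X}\nabla_Xu\cdot\nabla_Xv\,\d X$, which is equivalent to the standard one by Poincar\'e's inequality (here the Lipschitz regularity of $U_X$ is used). The Lax-Milgram lemma (equivalently, the Riesz representation theorem) then yields a unique $v(\cdot,Y,t)\in H^1_{X,0}(U_X)$ such that
\begin{equation*}
\int_{U_X}\nabla_Xv(\cdot,Y,t)\cdot\nabla_X\phi\,\d X = -\langle g^*(\cdot,Y,t),\phi\rangle_{H_X^{-1}(U_X),H_{X,0}^{1}(U_X)},
\end{equation*}
for every $\phi\in H^1_{X,0}(U_X)$, and satisfying the energy bound $\|\nabla_X v(\cdot,Y,t)\|_{L^2(U_X)}\leq \|g^*(\cdot,Y,t)\|_{H^{-1}_X(U_X)}$. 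I would then set $\j(\cdot,Y,t):=\nabla_Xv(\cdot,Y,t)$.

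It remains to verify three properties. First, measurability of $\j$ in $(Y,t)$: this follows because the solution map $(-\Delta_X)^{-1}\colon H^{-1}_X(U_X)\to H^1_{X,0}(U_X)$ is a bounded linear isomorphism and $g^*\in L^2_{Y,t}(V_{Y,t},H^{-1}_X(U_X))$ is strongly measurable, so the composition $(Y,t)\mapsto v(\cdot,Y,t)$ is strongly measurable into $H^1_{X,0}(U_X)$, hence into $L^2(U_X)^m$ after applying $\nabla_X$. Second, integrating the pointwise energy estimate over $V_{Y,t}$ yields
\begin{equation*}
\|\j\|^2_{L^2(V_{Y,t},L^2(U_X))}\leq \|g^*\|^2_{L_{Y,t}^2(V_{Y,t},H^{-1}_X(U_X))}<\infty,
\end{equation*}
so $\j\in (L^2(V_{Y,t},L^2(U_X)))^m$. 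Third, integrating the slicewise identity against an arbitrary $\phi\in L^2(V_{Y,t},H^1_{X,0}(U_X))$ and invoking Fubini gives exactly the relation \eqref{e.g.condition} with $f\equiv 0$, i.e.\ $\nabla_X\cdot\j = g^*=g^*+(\partial_t+X\cdot\nabla_Y)(0)$. Thus $(0,\j)\in\mathcal{A}(g^*)$, proving that the set is non-empty. There is no serious obstacle here: the construction is essentially a parametrized application of Lax-Milgram, and the only minor point to handle carefully is the measurability of the slicewise solution, which is standard.
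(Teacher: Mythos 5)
Your proof is correct and follows essentially the same route as the paper: the paper also solves $\Delta_X v = g^*+(\partial_t+X\cdot\nabla_Y)f$ slicewise in $(Y,t)$ via Lax--Milgram and takes $\j=\nabla_X v$, the only difference being that it keeps an arbitrary $f\in W_0$ where you specialize to $f\equiv 0$ (which suffices for non-emptiness). Your added care about strong measurability of the slicewise solution is a detail the paper leaves implicit, but otherwise the arguments coincide.
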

\begin{proof} Take $f\in  W_{0}$  and consider the equation
\begin{align}\label{ex1}
&\Delta_Xv(X,Y,t) = (g^\ast(X,Y,t)+(\partial_t+X\cdot\nabla_Y)f(X,Y,t))\in H_X^{-1}(U_X),
\end{align}
for $\d Y\d t$-a.e $(Y,t)\in V_{Y,t}$. By the Lax-Milgram theorem this equation has a (unique) solution $v(\cdot)=v(\cdot,Y,t)\in H^1_{X,0}(U_X)$ and
\begin{align}  \label{ex2}
 ||\nabla_Xv||_{L_{Y,t}^2(V_{Y,t},L^2(U_X))}\leq c||g^\ast+(\partial_t+X\cdot\nabla_Y)f||_{L_{Y,t}^2(V_{Y,t},{H}_X^{-1}(U_X))}<\infty,
\end{align}
as $f\in W_0$. In particular,
\begin{equation}\label{nempty}
(f,\nabla_Xv)\in \mathcal{A}(g^\ast),
\end{equation} and hence $\mathcal{A}(g^\ast)$ is non-empty.
\end{proof}

\begin{lemma} \label{1stlemmahha} The functional $\J$ introduced in
\eqref{e.onnb} is uniformly convex on $\mathcal{A}(g^*)$.
\end{lemma}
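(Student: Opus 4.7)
The plan is to show that for any two elements $(f_1,\j_1), (f_2,\j_2)\in\mathcal{A}(g^\ast)$, setting $(f,\j):=\tfrac{1}{2}(f_1+f_2,\j_1+\j_2)$, we have the quantitative convexity estimate
\begin{equation*}
\J[f,\j]\leq \tfrac{1}{2}\J[f_1,\j_1]+\tfrac{1}{2}\J[f_2,\j_2]-\tfrac{1}{8\Gamma}\iiint_{U_X\times V_{Y,t}}\bigl(|\nabla_X f_1-\nabla_X f_2|^2+|\j_1-\j_2|^2\bigr)\d X\d Y\d t.
\end{equation*}
First, I would apply the uniform convexity of $\tilde A$ stated in \eqref{e.A.unif.convex} pointwise, which by a direct computation (the parallelogram identity absorbing the quadratic correction) yields, for a.e.\ $(X,Y,t)$,
\begin{equation*}
\tilde A(\nabla_X f,\j,X,Y,t)\leq \tfrac{1}{2}\tilde A(\nabla_X f_1,\j_1,X,Y,t)+\tfrac{1}{2}\tilde A(\nabla_X f_2,\j_2,X,Y,t)-\tfrac{1}{8\Gamma}\bigl(|\nabla_X f_1-\nabla_X f_2|^2+|\j_1-\j_2|^2\bigr).
\end{equation*}

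Second, I would handle the bilinear correction $-\nabla_X f\cdot \j$ appearing in $\J$. Expanding gives the algebraic identity
\begin{equation*}
\nabla_X f\cdot\j=\tfrac{1}{2}(\nabla_X f_1\cdot\j_1+\nabla_X f_2\cdot\j_2)-\tfrac{1}{4}(\nabla_X f_1-\nabla_X f_2)\cdot(\j_1-\j_2),
\end{equation*}
so that, after integration, the desired convexity estimate follows provided one can show that the cross integral $I:=\iiint(\nabla_X(f_1-f_2))\cdot(\j_1-\j_2)\,\d X\d Y\d t$ is non-positive. Setting $w:=f_1-f_2\in W_0$ and $\h:=\j_1-\j_2$, the constraint defining $\mathcal A(g^\ast)$ gives $\nabla_X\cdot\h=(\partial_t+X\cdot\nabla_Y)w$ in the sense of \eqref{e.g.condition}, and since $w(\cdot,Y,t)\in H^1_{X,0}(U_X)$ for a.e.\ $(Y,t)$ we may integrate by parts in $X$ via the duality pairing to obtain
\begin{equation*}
I=-\iint_{V_{Y,t}}\langle (\partial_t+X\cdot\nabla_Y)w(\cdot,Y,t),w(\cdot,Y,t)\rangle_{H_X^{-1}(U_X),H_{X,0}^1(U_X)}\d Y\d t.
\end{equation*}

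Third, I would establish the integration-by-parts identity
\begin{equation*}
\iint_{V_{Y,t}}\langle (\partial_t+X\cdot\nabla_Y)w,w\rangle\,\d Y\d t=\tfrac{1}{2}\int_{\overline{U_X}\times\partial V_{Y,t}}w^2\,(X,1)\cdot N_{Y,t}\,\d\sigma,
\end{equation*}
by approximating $w\in W_0$ by smooth functions in $C_{\K,0}^\infty(\overline{U_X\times V_{Y,t}})$ (using the definition of $W_0$ as a closure) and applying the divergence theorem to $\tfrac{1}{2}w^2$ against the vector field $(0,X,1)$ on $U_X\times V_{Y,t}$, exploiting the $C^{1,1}$-smoothness of $\partial V_{Y,t}$ and Lipschitz character of $\partial U_X$ so the boundary integral is well defined. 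Because $w$ vanishes on $\partial_\K(U_X\times V_{Y,t})$, the right-hand side is supported on the complementary portion of $\overline{U_X}\times\partial V_{Y,t}$ where $(X,1)\cdot N_{Y,t}\geq 0$, and is therefore non-negative. Consequently $I\leq 0$, and combining everything yields the displayed uniform convexity inequality.

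The delicate step I anticipate is the rigorous justification of the integration-by-parts identity for arbitrary $w\in W_0$: the density of $C_{\K,0}^\infty(\overline{U_X\times V_{Y,t}})$ in $W_0$ gives strong convergence of $w_n\to w$ in $L^2_{Y,t}(V_{Y,t},H^1_{X,0}(U_X))$ and weak convergence of the transport terms in $L^2_{Y,t}(V_{Y,t},H_X^{-1}(U_X))$, which is just enough to pass to the limit in the duality pairing on the left-hand side but requires care on the boundary integral on the right. Once this trace identity is in hand, the sign of the cross term is immediate and the proof of uniform convexity with modulus $(8\Gamma)^{-1}$ in the natural $(\nabla_X f,\j)$-norm is complete.
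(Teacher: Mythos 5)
Your argument is correct and follows essentially the same route as the paper: pointwise uniform convexity of $\tilde A$ from \eqref{e.A.unif.convex} combined with the parallelogram identity, expansion of the bilinear term, reduction of the cross term to the transport pairing via the divergence constraint, and nonnegativity of that pairing by approximating with functions in $C^\infty_{\K,0}(\overline{U_X\times V_{Y,t}})$ and using the sign of $(X,1)\cdot N_{Y,t}$ off the Kolmogorov boundary (the paper parametrizes by a center $(f,\j)\in\mathcal{A}(g^*)$ plus a perturbation $(\tilde f,\tilde\j)\in\mathcal{A}(0)$, which is equivalent to your two-point midpoint formulation). The only cosmetic differences are that the paper sidesteps the trace identity you flag as delicate by using only the one-sided estimate $\iint\langle(\partial_t+X\cdot\nabla_Y)w,w\rangle\geq\liminf_j(\cdots)\geq 0$, and that it upgrades your modulus from the $(\nabla_X f,\j)$-seminorm to the full $W\times L^2$ norm via $\|(\partial_t+X\cdot\nabla_Y)\tilde f\|_{L^2_{Y,t}(V_{Y,t},H_X^{-1}(U_X))}\leq\|\tilde\j\|_{L^2}$ and the Poincar\'e inequality.
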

\begin{proof} Note that if $(f,\j)\in \mathcal{A}(g^*)$ and $(\tilde f,\tilde\j)\in \mathcal{A}(0)$, then $(f+\tilde f,\j+\tilde\j)\in \mathcal{A}(g^*)$ and
$(f-\tilde f,\j-\tilde\j)\in \mathcal{A}(g^*)$. Consider $(f,\j)\in \mathcal{A}(g^*)$. We first consider the term
\begin{equation*} \label{e.uu}
-\iiint_{U_X\times V_{Y,t}} \nabla_Xf\cdot\j \, \d X \d Y \d t.
\end{equation*}
 We have
\begin{align}
-\iiint_{U_X\times V_{Y,t}} \nabla_Xf\cdot\j \, \d X \d Y \d t & =\iiint_{U_X\times V_{Y,t}} f\nabla_X\cdot\j \, \d X \d Y \d t\notag\\
&=\iint_{V_{Y,t}} \langle g^\ast(\cdot,Y,t)+(\partial_t+X\cdot\nabla_Y)f(\cdot,Y,t)),f(\cdot,Y,t)\rangle  \, \d Y \d t\notag\\
&=\iint_{V_{Y,t}} \langle g^\ast(\cdot,Y,t),f(\cdot,Y,t)\rangle  \, \d Y \d t\notag\\
&+\iint_{V_{Y,t}} \langle (\partial_t+X\cdot\nabla_Y)f(\cdot,Y,t)),f(\cdot,Y,t)\rangle  \, \d Y \d t.
\end{align}
Recall that $W_0=W_0(U_X\times V_{Y,t})$ is the closure in the norm of
$W(U_X\times V_{Y,t})$ of $C^\infty_{\K,0}(\overline{U_X\times V_{Y,t}})$. In particular, there exists $\{f_j\}$, $f_j\in C^\infty_{\K,0}(\overline{U_X\times V_{Y,t}})$ such that
$$||f-f_j||_W\to 0\mbox{ as }j\to \infty,$$
and consequently
$$||(\partial_t+X\cdot\nabla_Y)(f-f_j)||_{L_{Y,t}^2(V_{Y,t},{H}_X^{-1}(U_X))}\to 0\mbox{ as }j\to \infty.$$
Using this we see that
\begin{equation}  \label{apa}
\begin{split}
&\iint_{V_{Y,t}} \langle(\partial_t+X\cdot\nabla_Y)f(\cdot,Y,t),f(\cdot,Y,t)\rangle \, \d Y\d t\\
&\geq \liminf_{j\to\infty}
\iint_{V_{Y,t}} \langle(\partial_t+X\cdot\nabla_Y)f_j(\cdot,Y,t),f_j(\cdot,Y,t)\rangle \, \d Y\d t.
\end{split}
\end{equation}
However, using that $f_j\in C^\infty_{\K,0}(\overline{U_X\times V_{Y,t}})$  we see that
\begin{equation} \label{apa+}
\begin{split}
&\iint_{V_{Y,t}} \langle(\partial_t+X\cdot\nabla_Y)f_j(\cdot,Y,t),f_j(\cdot,Y,t)\rangle \, \d Y\d t\\
&=\iiint_{U_X\times V_{Y,t}} (\partial_t+X\cdot\nabla_Y)f_j f_j \, \d X\d Y\d t\\
&=\frac 1 2\iiint_{U_X\times V_{Y,t}} (\partial_t+X\cdot\nabla_Y)f_j^2 \, \d X\d Y\d t\\
&=\frac 12 \int_{U_X}\iint_{\partial V_{Y,t}} f_j^2(X,1)\cdot N_{Y,t} \, \d \sigma_{Y,t}\d X\geq 0,
\end{split}
\end{equation}
by the divergence theorem and the definition of the Kolmogorov boundary. Hence,
\begin{align}
-\iiint_{U_X\times V_{Y,t}} \nabla_Xf\cdot\j \, \d X \d Y \d t & =\iiint_{U_X\times V_{Y,t}} f\nabla_X\cdot\j \, \d X \d Y \d t\notag\\
&\geq \iint_{V_{Y,t}} \langle g^\ast(\cdot,Y,t),f(\cdot,Y,t)\rangle  \, \d Y \d t.
\end{align}
Using this, and observing that,
\begin{align}
&-\frac 1 2\iiint_{U_X\times V_{Y,t}} \bigl (\nabla_X(f+\tilde f)\cdot(\j+\tilde\j) +\nabla_X(f-\tilde f)\cdot(\j-\tilde\j)-2\nabla_Xf\cdot\j\bigr )\, \d X \d Y \d t\notag\\
&=-\iiint_{U_X\times V_{Y,t}} \nabla_X\tilde f\cdot\tilde\j\, \d X \d Y \d t,
\end{align}
we can conclude that
\begin{align}
&-\frac 1 2\iiint_{U_X\times V_{Y,t}} \bigl (\nabla_X(f+\tilde f)\cdot(\j+\tilde\j) +\nabla_X(f-\tilde f)\cdot(\j-\tilde\j)-2\nabla_Xf\cdot\j\bigr )\, \d X \d Y \d t\geq 0,
\end{align}
over the set $\mathcal{A}(g^\ast)$.  Hence it suffices to  prove that
\begin{equation*}  
 \iiint_{U_X\times V_{Y,t}} \tilde A(\nabla_Xf,\j,X,Y,t)\, \d X \d Y \d t
\end{equation*}
is uniformly convex over the set $\mathcal{A}(g^\ast)$. With $(f,\j)\in \mathcal{A}(g^*)$ and $(\tilde f,\tilde\j)\in \mathcal{A}(0)$ as above,   \eqref{e.A.unif.convex} implies that
\begin{equation*}  
\frac 1 2 \tilde A(\nabla_X (f+\tilde f),\j + \tilde\j,\cdot) + \frac 1 2 \tilde A(\nabla_X (f-\tilde f),\j - \tilde \j,\cdot) - \tilde A(\nabla_X f,\j,\cdot) \ge \frac 1 {2\Gamma} \Ll( |\nabla_X\tilde f|^2 + |\tilde\j|^2 \Rr) .
\end{equation*}
We also have
\begin{align*}  
\|(\partial_t+X\cdot\nabla_Y) \tilde f\|_{L_{Y,t}^2(V_{Y,t},H^{-1}(U_X))}  \le \|\tilde \j\|_{L^2(U_X\times V_{Y,t})}.
\end{align*}
Thus
\begin{align*}  
& \iiint_{U_X\times V_{Y,t}}\biggl (\frac 1 2 \tilde A(\nabla_X (f+\tilde f),\j + \tilde\j,\cdot) + \frac 1 2 \tilde A(\nabla_X (f-\tilde f),\j - \tilde \j,\cdot) - \tilde A(\nabla_X f,\j,\cdot)\biggr )\, \d X \d Y \d t\notag\\
&\geq \frac{1}{4\Gamma} \Ll( ||\nabla_X\tilde f||^2_{L^2(U_X\times V_{Y,t})}+\|(\partial_t+X\cdot\nabla_Y) \tilde f\|^2_{L_{Y,t}^2(V_{Y,t},H^{-1}(U_X))}  +
||\tilde \j||^2_{L^2(U_X\times V_{Y,t})} \Rr)\notag\\
&\geq \frac{1}{4c\Gamma} \Ll( ||\tilde f||^2_{W(U_X\times V_{Y,t})}+\||\tilde \j||^2_{L^2(U_X\times V_{Y,t})} \Rr),
\end{align*}
by using the (standard) Poincar{\'e} inequality. Hence $\J$  is uniformly convex on $\mathcal{A}(g^*)$.
\end{proof}

\subsection{Correspondence between weak solutions and minimizers} As the functional $\J$ is uniformly convex over $\mathcal{A}(g^\ast)$ there exists a unique minimizing pair $(f_1,\j_1)\in \mathcal{A}(g^\ast)$ such that
\begin{align*}  
(f_1,\j_1):=&\argmin_{(f,\j)\in \mathcal{A}(g^\ast)} \J[f,\j]\notag\\
=&\argmin_{(f,\j)\in \mathcal{A}(g^\ast)}  \iiint_{U_X\times V_{Y,t}} (\tilde A(\nabla_Xf,\j,X,Y,t)-\nabla_Xf\cdot\j) \, \d X \d Y \d t.
\end{align*}
Note that
\begin{align*}  
\min_{(f,\j)\in \mathcal{A}(g^\ast)} \J[f,\j]=\min_{f\in W_0} J[f,g^*].
\end{align*}
Moreover, by construction of $\tilde A$, see \eqref{e.A.ineq}, we have
\begin{equation} \label{e.bonna}
J[f_1,g^\ast] \ge 0.
\end{equation}

\begin{lemma}\label{corre} There is a one-to-one correspondence between weak solutions in the sense of equation \eqref{weak3} to $\L  u=g^\ast$ in $U_X\times V_{Y,t}$, such that $u\in W_0$,  and null minimizers of $J[\cdot,g^\ast]$.
\end{lemma}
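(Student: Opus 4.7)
The plan is to prove the two implications separately: every weak solution $u \in W_0$ of $\L u = g^\ast$ is a null minimizer of $J[\cdot, g^\ast]$, and conversely every null minimizer of $J[\cdot, g^\ast]$ is a weak solution of $\L u = g^\ast$. Since the correspondence is simply the identity map on $W_0$, the ``one-to-one'' assertion is automatic once both implications are in hand. Both directions will rely on the variational characterization of $A$ through $\tilde A$, in particular the pointwise non-negativity \eqref{e.A.ineq} and the equality case \eqref{e.A.eqiff}.

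For the forward direction, given a weak solution $u \in W_0$ in the sense of Definition \ref{weaks}, I would set $\j_u := A(\nabla_X u, X, Y, t)$. The growth condition \eqref{assump2}-$(i)$ forces $\j_u \in (L^2(V_{Y,t}, L^2(U_X)))^m$, and the weak formulation \eqref{weak3} is exactly the distributional identity \eqref{e.g.condition} with $\g$ replaced by $\j_u$, so $(u, \j_u) \in \mathcal{A}(g^\ast)$. The equality case \eqref{e.A.eqiff} then makes the integrand defining $\J[u, \j_u]$ in \eqref{e.onnb} vanish pointwise, so $\J[u, \j_u] = 0$; combining this with $J[u, g^\ast] \geq 0$ from \eqref{e.bonna} yields $J[u, g^\ast] = 0$.

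For the converse, suppose $u \in W_0$ satisfies $J[u, g^\ast] = 0$. The key step is to upgrade this to the existence of an actual minimizer $\j_u$ in the admissible set. For this, I would freeze the first slot at $u$ and note that $\j \mapsto \J[u, \j]$ is uniformly convex on the affine set $\{\j : (u, \j) \in \mathcal{A}(g^\ast)\}$: the joint convexity \eqref{e.A.unif.convex} specialized in the $\eta$-variable (equivalently, the argument of Lemma \ref{1stlemmahha} applied with $\tilde f \equiv 0$, and noting that the linear piece $-\iiint \nabla_X u \cdot \j$ contributes nothing to the midpoint identity) delivers the needed $L^2$-coercive quadratic lower bound, while non-emptiness of the set follows as in Lemma \ref{1stlemma} with $f = u$. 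Hence any minimizing sequence is $L^2$-Cauchy, its limit $\j_u$ lies in the closed affine constraint set, and realizes $\J[u, \j_u] = 0$. By \eqref{e.A.ineq} the integrand $\tilde A(\nabla_X u, \j_u, \cdot) - \nabla_X u \cdot \j_u$ is pointwise non-negative and integrates to zero, so it vanishes almost everywhere; \eqref{e.A.eqiff} then forces $\j_u = A(\nabla_X u, \cdot)$ a.e. Reading \eqref{e.g.condition} for this $\j_u$ against an arbitrary $\phi \in L^2(V_{Y,t}, H^1_{X,0}(U_X))$ reproduces \eqref{weak3} verbatim, so $u$ is a weak solution.

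The only step that I expect to require real care is the attainment of the infimum in the converse direction; without passing from $\inf \J[u, \cdot] = 0$ to an actual minimizing $\j_u$, one cannot invoke \eqref{e.A.eqiff} and identify $\j_u$ with $A(\nabla_X u, \cdot)$. The remaining work — checking membership in the correct function spaces, rewriting \eqref{weak3} as \eqref{e.g.condition}, and the trivial bijectivity — is essentially bookkeeping.
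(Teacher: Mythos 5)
Your proposal is correct and follows essentially the same route as the paper: the forward direction pairs $u$ with $\j_u=A(\nabla_X u,\cdot)$ and invokes \eqref{e.A.eqiff} together with \eqref{e.bonna}, while the converse uses non-negativity \eqref{e.A.ineq} of the integrand plus the equality case \eqref{e.A.eqiff} to recover the flux and hence the weak formulation. The only presentational difference is that the paper identifies a null minimizer with the unique global minimizing pair $(f_1,\j_1)$ obtained from uniform convexity of $\J$ on $\mathcal{A}(g^*)$, whereas you attain the infimum in the $\j$-slot directly for fixed $u$ by the same uniform-convexity/Cauchy-sequence mechanism; both rest on the identical variational facts, and your explicit attention to the attainment step (which the paper leaves implicit) is sound.
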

\begin{proof} To prove the lemma we need to prove that for every $f \in W_{0}$, we have
\begin{align*}  
f \mbox{ solves $\L  u=g^\ast$ in the weak sense in $U_X\times V_{Y,t}$} \iff J[f,g^\ast] = 0.
\end{align*}
Indeed, the implication ''$\implies$'' is clear since if $f$ solves $\L  u=g^\ast$ in the weak sense, then
\begin{equation*}  
(f,A(\nabla_X f,X,Y,t)) \in \mathcal{A}(g^\ast) \quad \text{ and } \quad \J[f,A(\nabla_X f,X,Y,t)] = 0=J[f,g^\ast].
\end{equation*}
Conversely, if $J[f,g^\ast] = 0$, then $f = f_1$ and
\begin{equation}\label{e.null.minha}
\J[f_1,\j_1]= \iiint_{U_X\times V_{Y,t}} (\tilde A(\nabla_Xf_1,\j_1,X,Y,t)-\nabla_Xf_1\cdot\j_1) \, \d X \d Y \d t=0.
\end{equation}
Using  \eqref{e.A.eqiff}, we see that the identity \eqref{e.null.minha} implies that
\begin{equation*}  
\j_1 = A(\nabla f_1,\cdot,\cdot,\cdot) \quad \text{a.e. in } U_X\times V_{Y,t},
\end{equation*}
and by the definition of the set $\mathcal{A}(g^\ast)$,
\begin{equation*}  
\nabla_X\cdot\j_1 = g^\ast+(\partial_t+X\cdot\nabla_Y)f_1.
\end{equation*}
Hence $f_1$ indeed solves
\begin{equation*}  
\nabla_X\cdot A(\nabla f_1,\cdot,\cdot,\cdot) -(\partial_t+X\cdot\nabla_Y)f_1=g^\ast
\end{equation*}
in the weak sense. I.e., we recover that $f = f_1$ is indeed a weak solution of $\L  u=g^\ast$. In particular, the fact that there is at most one solution to $\L  u=g^*$ is clear.
\end{proof}

\subsection{An associated perturbed convex minimization problem} Using \eqref{e.bonna} and Lemma \ref{corre} we see that to complete the proof of Theorem \ref{weakdp1} it remains to prove that
\begin{equation}
\label{e.null.min}
J[f_1,g^*] \le 0.
\end{equation}
In order to do so, we introduce the perturbed convex minimization problem defined, for every $f^* \in L_{Y,t}^2(V_{Y,t},H_X^{-1}(U_X))$, by
\begin{equation*}  
G(f^*) := \inf_{f \in W_{0}}\bigl ( J[f, f^*+g^* ]  {- \iint_{ V_{Y,t}} \langle f^*(\cdot,Y,t),f(\cdot,Y,t)\rangle\, \d Y\d t\bigr ).}
\end{equation*}
As \begin{equation*}  
G(0) = \inf_{f \in W_{0}} J[f, g^* ],
\end{equation*}
we see that to prove \eqref{e.null.min} is suffices to prove that $G(0) \le 0$.

\begin{lemma} $G$ is a convex, locally bounded from above and lower semi-continuous functional on $L_{Y,t}^2(V_{Y,t},H_X^{-1}(U_X))$.
\end{lemma}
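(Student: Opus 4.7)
The overall strategy is to recast $G$ via an integration by parts so that it becomes the infimum of a convex integrand over an affinely constrained set, and then to use an explicit test pair to bound $G$ from above; lower semi-continuity will come for free from convexity together with the local upper bound. For any $(f,\g)\in\mathcal{A}(f^*+g^*)$, since $f\in W_0$ we have $f(\cdot,Y,t)\in H_{X,0}^1(U_X)$ for a.e.\ $(Y,t)$, so $f$ is an admissible test function in \eqref{e.g.condition}. Inserting $\phi=f$ and invoking $\nabla_X\cdot\g=f^*+g^*+(\partial_t+X\cdot\nabla_Y)f$ yields
\[
-\iiint_{U_X\times V_{Y,t}}\g\cdot\nabla_X f\,\d X\d Y\d t=\iint_{V_{Y,t}}\bigl(\langle f^*,f\rangle+\langle g^*,f\rangle+\langle(\partial_t+X\cdot\nabla_Y)f,f\rangle\bigr)\,\d Y\d t.
\]
Substituting this identity into the definitions of $\J$ and $G$, the contribution $\iint\langle f^*,f\rangle$ cancels, and we obtain $G(f^*)=\inf_{(f,\g)\in\mathcal{A}(f^*+g^*)}K(f,\g)$, where
\[
K(f,\g):=\iiint_{U_X\times V_{Y,t}}\!\!\tilde A(\nabla_X f,\g,\cdot)\,\d X\d Y\d t+\iint_{V_{Y,t}}\!\!\langle g^*,f\rangle\,\d Y\d t+\iint_{V_{Y,t}}\!\!\langle(\partial_t+X\cdot\nabla_Y)f,f\rangle\,\d Y\d t
\]
no longer depends explicitly on $f^*$.

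Convexity is now transparent. The functional $K$ is jointly convex in $(f,\g)$: the first summand by \eqref{e.A.unif.convex}, the second is linear in $f$, and the third is a non-negative quadratic form on $W_0$ by the approximation and Kolmogorov-boundary divergence computation already carried out in \eqref{apa}--\eqref{apa+}, and hence convex. The constraint set $\mathcal{A}(f^*+g^*)$ depends affinely on $f^*$ since $\nabla_X\cdot$ and $(\partial_t+X\cdot\nabla_Y)$ are linear, so a convex combination of feasible pairs for $f_1^*$ and $f_2^*$ lies in $\mathcal{A}(\lambda f_1^*+(1-\lambda)f_2^*+g^*)$. Selecting near-minimizers $(f_i,\g_i)\in\mathcal{A}(f_i^*+g^*)$ with $K(f_i,\g_i)\le G(f_i^*)+\epsilon$, forming the convex combination, applying joint convexity of $K$ and sending $\epsilon\to 0$ delivers $G(\lambda f_1^*+(1-\lambda)f_2^*)\le \lambda G(f_1^*)+(1-\lambda)G(f_2^*)$.

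For local boundedness from above, given any $f^*$ I take the explicit feasible pair $(0,\nabla_X v)\in\mathcal{A}(f^*+g^*)$ produced by Lemma \ref{1stlemma} (applied with $f\equiv 0$), where $v(\cdot,Y,t)\in H_{X,0}^1(U_X)$ solves $\Delta_X v=f^*+g^*$ and $\||\nabla_X v|\|_{L^2(U_X\times V_{Y,t})}\le c\|f^*+g^*\|_{L_{Y,t}^2(V_{Y,t},H_X^{-1}(U_X))}$. Using the quadratic upper bound $\tilde A(0,\eta,\cdot)\le C(1+|\eta|^2)$ (which follows from the concavity property \eqref{e.A.C11} combined with $\tilde A(0,0,\cdot)=0$, the latter coming from $A\in R(\Lambda)$ via $A(0,\cdot)=0$ and \eqref{e.A.eqiff}), this yields
\[
G(f^*)\le K(0,\nabla_X v)\le C\bigl(1+\|f^*+g^*\|^2_{L_{Y,t}^2(V_{Y,t},H_X^{-1}(U_X))}\bigr),
\]
which is bounded uniformly on bounded subsets of $L_{Y,t}^2(V_{Y,t},H_X^{-1}(U_X))$. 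Lower semi-continuity then follows automatically: a convex function on a Banach space that is bounded above on a neighborhood of every point is continuous, and hence in particular lower semi-continuous. The only delicate point in this plan is handling the skew-adjoint drift $(\partial_t+X\cdot\nabla_Y)f$, which in general would destroy convexity; the vanishing of $f\in W_0$ on the Kolmogorov boundary $\partial_\K(U_X\times V_{Y,t})$, as already captured in \eqref{apa}--\eqref{apa+}, is precisely what turns it into a non-negative convex quadratic form on $W_0$ and makes the whole scheme go through.
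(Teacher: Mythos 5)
Your proposal is correct and follows essentially the same route as the paper: the same integration by parts against the constraint to eliminate the explicit $f^*$-dependence, convexity of the drift term via the Kolmogorov-boundary limit argument of \eqref{apa}--\eqref{apa+}, the Lax--Milgram pair $(0,\nabla_X v)$ from Lemma \ref{1stlemma} for the local upper bound, and lower semi-continuity deduced from convexity plus local boundedness from above. Your quadratic bound $\tilde A(0,\eta,\cdot)\le C(1+|\eta|^2)$ makes the finiteness step slightly more explicit than the paper's, but it is the same argument.
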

\begin{proof} For every pair  $(f,\j) \in \mcl A(f^*+g^*)$, we have
\begin{equation*}
\nabla_X \cdot \j = f^*+g^* +(\partial_t+X\cdot\nabla_Y)f,
\end{equation*}
and thus
\begin{align*}  
\J[f,\j] & = \iiint_{U_X\times V_{Y,t}} (\tilde A(\nabla_Xf,\j,X,Y,t)-\nabla_Xf\cdot\j) \, \d X \d Y \d t\notag\\
 & = \iiint_{U_X\times V_{Y,t}} \tilde A(\nabla_Xf,\j,X,Y,t) \, \d X \d Y \d t{+\iint_{V_{Y,t}}   \langle (f^*+g^*)(\cdot,Y,t),f(\cdot,Y,t)\rangle\, \d Y\d t}\notag\\
 &+\iint_{V_{Y,t}}   \langle (\partial_t+X\cdot\nabla_Y)f(\cdot,Y,t),f(\cdot,Y,t)\rangle\, \d Y\d t.
\end{align*}
Hence
\begin{align*}  
&\J[f,\j] {-\iint_{V_{Y,t}} \langle f^*(\cdot,Y,t),f(\cdot,Y,t)\rangle\, \d Y\d t} \notag\\
& = \iiint_{U_X\times V_{Y,t}} \tilde A(\nabla_Xf,\j,X,Y,t) \, \d X \d Y \d t+\iint_{V_{Y,t}}   \langle  g^*(\cdot,Y,t),f(\cdot,Y,t)\rangle\, \d Y\d t\\
&+\iint_{V_{Y,t}}   \langle (\partial_t+X\cdot\nabla_Y)f(\cdot,Y,t),f(\cdot,Y,t)\rangle\, \d Y\d t.
\end{align*}
Taking the infimum over all $(f,\j)$ satisfying the affine constraint $(f,\j) \in \mcl A(f^*+g^*)$ we obtain the quantity $G(f^*)$, i.e., $G(f^*)$ can be expressed as
\begin{equation*}  
G(f^*) = \inf_{(f,\j):\  (f,\j) \in \mcl A(f^*+g^*)}\bigl ( \J[f,\j] { - \iint_{V_{Y,t}} \langle f^*(\cdot,Y,t),f(\cdot,Y,t)\rangle\, \d Y\d t} \bigr ).
\end{equation*}
In particular, $G(f^*)$ can be expressed as the infimum of
\begin{align}\label{lulu}
 &\iiint_{U_X\times V_{Y,t}} \tilde A(\nabla_Xf,\j,X,Y,t) \, \d X \d Y \d t+\iint_{V_{Y,t}}   \langle  g^*(\cdot,Y,t),f(\cdot,Y,t)\rangle\, \d Y \d t\notag\\
 &+\iint_{V_{Y,t}}   \langle (\partial_t+X\cdot\nabla_Y)f(\cdot,Y,t),f(\cdot,Y,t)\rangle\, \d Y\d t
\end{align}
with respect to $(f,\j)$ such that $(f,\j) \in \mcl A(f^*+g^*)$. We now recall the argument in \eqref{apa} and \eqref{apa+}. In particular, given $f\in W_0$ there exists $\{f_j\}$, $f_j\in C^\infty_{\K,0}(\overline{U_X\times V_{Y,t}})$ such that
\begin{equation}  \label{apauu-}||f-f_j||_W\to 0\mbox{ as }j\to \infty,
\end{equation}
and consequently
$$||(\partial_t+X\cdot\nabla_Y)(f-f_j)||_{L_{Y,t}^2(V_{Y,t},{H}_X^{-1}(U_X))}\to 0\mbox{ as }j\to \infty.$$
Using \eqref{apa} and \eqref{apa+} we have
\begin{equation}  \label{apauu}
\begin{split}
&\iint_{V_{Y,t}} \langle(\partial_t+X\cdot\nabla_Y)f(\cdot,Y,t),f(\cdot,Y,t)\rangle \, \d Y\d t\notag\\
&=\lim_{j\to\infty}
\frac 12 \int_{U_X}\iint_{\partial V_{Y,t}} f_j^2|(X,1)\cdot N_{Y,t}| \, \d \sigma_{Y,t}\d X.
\end{split}
\end{equation}
Obviously we get the same limit in \eqref{apauu} independent of what sequence $\{f_j\}$ chosen as long as \eqref{apauu-} holds. Now consider
$f,g\in W_0$ and let $\{f_j\}$, $f_j\in C^\infty_{\K,0}(\overline{U_X\times V_{Y,t}})$, $\{g_j\}$, $g_j\in C^\infty_{\K,0}(\overline{U_X\times V_{Y,t}})$, be such that
\begin{equation}  \label{apauu-+}||f-f_j||_W+||g-g_j||_W\to 0\mbox{ as }j\to \infty,
\end{equation}
Then
\begin{equation}  \label{apauu-++}||(\tau f+(1-\tau)g)-(\tau f_j+(1-\tau)g_j)||_W\to 0\mbox{ as }j\to \infty,
\end{equation}
for all $\tau\in [0,1]$. Hence
\begin{align}  \label{apauu+++++}
&\iint_{V_{Y,t}} \langle(\partial_t+X\cdot\nabla_Y)(\tau f+(1-\tau)g)(\cdot,Y,t),(\tau f+(1-\tau)g)(\cdot,Y,t)\rangle \, \d Y\d t\notag\\
&=\lim_{j\to\infty}
\frac 12 \int_{U_X}\iint_{\partial V_{Y,t}} (\tau f_j+(1-\tau)g_j)^2|(X,1)\cdot N_{Y,t}| \, \d \sigma_{Y,t}\d X\notag\\
&\leq\lim_{j\to\infty}
\frac 12 \int_{U_X}\iint_{\partial V_{Y,t}} \tau f_j^2|(X,1)\cdot N_{Y,t}| \, \d \sigma_{Y,t}\d X\notag\\
&+\lim_{j\to\infty}
\frac 12 \int_{U_X}\iint_{\partial V_{Y,t}} (1-\tau)g_j^2|(X,1)\cdot N_{Y,t}| \, \d \sigma_{Y,t}\d X,
\end{align}
and we deduce that
\begin{align}  \label{apauu+++++1}
&\iint_{V_{Y,t}} \langle(\partial_t+X\cdot\nabla_Y)(\tau f+(1-\tau)g)(\cdot,Y,t),(\tau f+(1-\tau)g)(\cdot,Y,t)\rangle \, \d Y\d t\notag\\
&\leq\tau\iint_{V_{Y,t}} \langle(\partial_t+X\cdot\nabla_Y)f(\cdot,Y,t),f(\cdot,Y,t)\rangle \, \d Y\d t\notag\\
&+(1-\tau)\iint_{V_{Y,t}} \langle(\partial_t+X\cdot\nabla_Y)g(\cdot,Y,t),g(\cdot,Y,t)\rangle \, \d Y\d t.
\end{align}
In particular, we can conclude that the mapping
$$f\to \iint_{V_{Y,t}} \langle(\partial_t+X\cdot\nabla_Y)(\tau f+(1-\tau)g)(\cdot,Y,t),(\tau f+(1-\tau)g)(\cdot,Y,t)\rangle \, \d Y\d t$$
is convex on $W_0$.  Using this, and \eqref{e.A.unif.convex}, we see that the expression in \eqref{lulu} is convex as a function of $(f,f^*,\j)$ and this proves that $G$ is convex. Furthermore, using \eqref{ex1}, \eqref{ex2}, and  \eqref{nempty} we can conclude that the {infimum of the expression in} \eqref{lulu} is finite, hence $G(f^*)<\infty$. In particular, the function $G$ is locally bounded from above. These two properties imply that $G$ is lower semi-continuous, see \cite[Lemma~2.1 and Corollary~2.2]{ET}.
\end{proof}

\subsection{The convex dual of $G$} We denote by $G^*$ the convex dual of $G$, defined for every
$$h \in (L_{Y,t}^2(V_{Y,t},H_X^{-1}(U_X)))^\ast=L_{Y,t}^2(V_{Y,t},H_{X,0}^1(U_X)),$$ as
\begin{equation*}  
G^*(h) := \sup_{f^* \in L_{Y,t}^2(V_{Y,t},H_X^{-1}(U_X))} \bigl( -G(f^*) + \iint_{V_{Y,t}} \langle f^*(\cdot,Y,t),h(\cdot,Y,t)\rangle\, \d Y\d t \bigr).
\end{equation*}
Let $G^{**}$ be the bidual of $G$. Since $G$ is lower semi-continuous, we have that $G^{**} = G$ (see \cite[Proposition~4.1]{ET}), and in particular,
\begin{equation*}  
G(0) = G^{**}(0) = \sup_{h \in L_{Y,t}^2(V_{Y,t},H_{X,0}^1(U_X))} \bigl( -G^*(h) \bigr) .
\end{equation*}
In order to prove that $G(0) \le 0$, it therefore suffices to show that
\begin{equation}
\label{e.get.to.nullmin}
G^*(h) \ge 0\mbox{ for all } h \in L_{Y,t}^2(V_{Y,t},H_{X,0}^1(U_X)).
\end{equation}

To continue we note that we can rewrite $G^*(h)$ as
\begin{equation}
\label{e.second.G*}
\begin{split}
G^*(h) = \sup_{(f,\j,f^*)} &\bigg\{ \iiint_{U_X\times V_{Y,t}} -(\tilde A(\nabla_X f,\j,\cdot,\cdot,\cdot)-(\nabla_X f\cdot \j)) \,  \d X \d Y \d t\\
&+\iint_{V_{Y,t}} {\langle f^*(\cdot,Y,t),(h(\cdot,Y,t)+f(\cdot,Y,t))\rangle}\, \d Y \d t\bigg \},
\end{split}
\end{equation}
where the supremum is taken with respect to $$(f,\j,f^*)\in W_{0}\times (L_{Y,t}^2(V_{Y,t},L^2_X(U_X)))^m\times L_{Y,t}^2(V_{Y,t},H_X^{-1}(U_X)),$$ subject to the constraint
\begin{equation}
\label{e.constraint.fj}
\nabla_X\cdot \j = f^* + g^* +(\partial_t+X\cdot\nabla_Y)f.
\end{equation}
Furthermore, note that  for every $h \in  L_{Y,t}^2(V_{Y,t},H_{X,0}^1(U_X))$, we have $G^*(h) \in \R \cup \{+\infty\}$.

\begin{lemma}\label{red} Consider $h \in  L_{Y,t}^2(V_{Y,t},H_{X,0}^1(U_X))$. Then
\begin{equation}
\label{e.finite.G*}
G^*(h) < +\infty \quad \implies \quad h \in W\cap L_{Y,t}^2(V_{Y,t},H_{X,0}^1(U_X)).
\end{equation}
\end{lemma}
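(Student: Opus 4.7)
The objective is to upgrade the hypothesis $h \in L_{Y,t}^2(V_{Y,t},H_{X,0}^1(U_X))$ to the condition $(\partial_t + X \cdot \nabla_Y)h \in L_{Y,t}^2(V_{Y,t},H_X^{-1}(U_X))$, which, together with the assumed $X$-regularity, delivers $h \in W$ via the characterization \eqref{weak1-}. My plan is to exploit the finiteness of the supremum in the dual representation \eqref{e.second.G*} of $G^\ast$ by inserting a one-parameter family of admissible triples $(f,\j,f^\ast)$ specifically designed to probe the transport derivative $(\partial_t + X \cdot \nabla_Y)h$.

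More precisely, for an arbitrary test function $f_0 \in C_c^\infty(U_X \times V_{Y,t})$, I take $f = f_0$, $\j = 0$, and $f^\ast = -g^\ast - (\partial_t + X \cdot \nabla_Y)f_0$. Then the constraint \eqref{e.constraint.fj} is trivially satisfied, $f_0 \in W_{0}$ since its support is compact in $U_X \times V_{Y,t}$, and $f^\ast \in L_{Y,t}^2(V_{Y,t},H_X^{-1}(U_X))$ since $(\partial_t + X \cdot \nabla_Y)f_0 \in C_c^\infty \subset L^2$. Substituting into \eqref{e.second.G*} yields the lower bound
\begin{equation*}
G^\ast(h) \geq -\iiint_{U_X \times V_{Y,t}} \tilde A(\nabla_X f_0,0,X,Y,t)\,\d X\d Y\d t - \iint_{V_{Y,t}} \langle g^\ast + (\partial_t + X \cdot \nabla_Y)f_0,\,h + f_0 \rangle\,\d Y\d t.
\end{equation*}
Three simplifications then enter: first, the identity $A(0,\cdot)=0$ combined with \eqref{e.A.eqiff} forces $\tilde A(0,0,\cdot)=0$, and the concavity property \eqref{e.A.C11} upgrades this to $\tilde A(\nabla_X f_0,0,\cdot) \leq \tfrac{\Gamma}{2}|\nabla_X f_0|^2$; second, since $f_0 \in C_c^\infty$ and $\nabla_Y \cdot X = 0$, integration by parts gives $\iiint (\partial_t + X \cdot \nabla_Y)f_0 \cdot f_0 = \tfrac12 \iiint (\partial_t + X \cdot \nabla_Y) f_0^{\,2} = 0$; third, the remaining duality brackets involving $g^\ast$ are controlled by $\|g^\ast\|(\|h\|+\|f_0\|)$ in the natural norms.

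Rearranging and applying the same argument to $-f_0$ to remove the sign, these ingredients combine into the quadratic bound
\begin{equation*}
\Ll|\iiint_{U_X \times V_{Y,t}} (\partial_t + X \cdot \nabla_Y)f_0 \cdot h \,\d X\d Y\d t\Rr| \leq G^\ast(h) + \tfrac{\Gamma}{2}\|\nabla_X f_0\|_{L^2}^2 + \|g^\ast\|(\|h\| + \|f_0\|),
\end{equation*}
valid for every $f_0 \in C_c^\infty(U_X \times V_{Y,t})$. Since the left-hand side is homogeneous of degree one in $f_0$ while the right-hand side is at most quadratic, replacing $f_0$ by $\lambda f_0$ and optimizing over $\lambda>0$ (or, equivalently, requiring that the resulting quadratic inequality in $\lambda$ admit a non-negative discriminant) converts the estimate into the linear bound
\begin{equation*}
\Ll|\iiint_{U_X \times V_{Y,t}} (\partial_t + X \cdot \nabla_Y)f_0 \cdot h \,\d X\d Y\d t\Rr| \leq C\,\|f_0\|_{L_{Y,t}^2(V_{Y,t},H_{X,0}^1(U_X))},
\end{equation*}
with $C=C(\Gamma, G^\ast(h), \|h\|, \|g^\ast\|)$.

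By definition of the distributional transport derivative, the integral on the left is precisely $-\langle (\partial_t + X \cdot \nabla_Y)h,\,f_0 \rangle$, so the inequality above shows that this distribution extends, by density of $C_c^\infty(U_X \times V_{Y,t})$ in $L_{Y,t}^2(V_{Y,t},H_{X,0}^1(U_X))$, to a bounded linear functional on that space, i.e.\ an element of $L_{Y,t}^2(V_{Y,t},H_X^{-1}(U_X))$; combined with the assumed regularity in $X$, this gives $h \in W$ via \eqref{weak1-}. The main obstacle I anticipate is not conceptual but administrative: pinning down the correct test triple that isolates $(\partial_t + X \cdot \nabla_Y)h$ cleanly, verifying the vanishing of the boundary-like term via compact support of $f_0$, and executing the passage from a quadratic to a linear estimate without losing track of the dependence on the given data.
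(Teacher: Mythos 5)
Your argument is correct and follows essentially the same route as the paper: both proofs lower-bound $G^*(h)$ by restricting the supremum in \eqref{e.second.G*} to compactly supported smooth $f$ with a fixed admissible choice of $\j$ and $f^*$ (you take $\j=0$ and absorb $g^*$ into $f^*$, while the paper takes $\j_0$ solving $\nabla_X\cdot\j_0=g^*$ and $f^*=-(\partial_t+X\cdot\nabla_Y)f$), and then conclude that $f\mapsto\iint_{V_{Y,t}}\langle(\partial_t+X\cdot\nabla_Y)f,h\rangle\,\d Y\d t$ is bounded in the $L^2_{Y,t}(V_{Y,t},H^1_{X,0}(U_X))$ norm, whence the claim by density. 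The one step to tighten is the bound $\tilde A(\xi,0,\cdot)\le\tfrac{\Gamma}{2}|\xi|^2$: concavity of $\tilde A-\tfrac{\Gamma}{2}(|\xi|^2+|\eta|^2)$ together with its vanishing at the origin only yields an affine majorant, and one must additionally invoke \eqref{e.A.ineq}--\eqref{e.A.eqiff} to see that $(0,0)$ is a critical point of $\tilde A(\xi,\eta,\cdot)-\xi\cdot\eta$ (although any at-most-quadratic upper bound on $\tilde A(\cdot,0,\cdot)$ suffices for your scaling step).
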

\begin{proof}  To prove the lemma we need to prove that $(\partial_t+X\cdot\nabla_Y)h \in L_{Y,t}^2(V_{Y,t},H_{X}^{-1}(U_X))$.  Using that we take a supremum in the definition of $G^\ast$ we can develop lower bounds on $G^\ast$ by restricting the set with respect to which we take the supremum. Here, for $f\in W_0$, we choose to restrict the supremum  to $(f,\j,f^*)$ where  $\j=\j_0$ is a solution of $\nabla_X\cdot \j_0= g^* $ and $f^* := -(\partial_t+X\cdot\nabla_Y)f$. Recall from \eqref{nempty} that such a $\j_0 \in (L_{Y,t}^2(V_{Y,t},L^2_X(U_X)))^m$ exists.  With these choices for $\j$ and  $f^*$, the constraint \eqref{e.constraint.fj} is satisfied, and we obtain that
\begin{align*}  
G^*(h)\geq \sup_{f\in W_0} &\biggl\{ \iiint_{U_X\times V_{Y,t}} -(\tilde A(\nabla_X f,\j_0,\cdot,\cdot,\cdot)-(\nabla_X f\cdot \j_0)) \, \d X \d Y \d t\notag\\
&-\iint_{V_{Y,t}} {\langle (\partial_t+X\cdot\nabla_Y)f(\cdot,Y,t),(h(\cdot,Y,t) + f(\cdot,Y,t))\rangle}\, \d Y\d t\biggr \}.
\end{align*}
Consider  $f\in  C^\infty_{\K,0}(\overline{U_X\times V_{Y,t}})\subset W_0$. Then, again arguing as in \eqref{apa}, \eqref{apa+},
\begin{align*}  
-\iint_{V_{Y,t}} \langle (\partial_t+X\cdot\nabla_Y)f(\cdot,Y,t),f(\cdot,Y,t)\rangle\, \d Y\d t\leq 0.
\end{align*}
{Furthermore, restricting to $f\in C^\infty_{0}({U_X\times V_{Y,t}})\subset C^\infty_{\K,0}(\overline{U_X\times V_{Y,t}})$ yields by the same argument that
\begin{align*}  
-\iint_{V_{Y,t}} \langle (\partial_t+X\cdot\nabla_Y)f(\cdot,Y,t),f(\cdot,Y,t)\rangle\, \d Y\d t = 0.
\end{align*}
}Hence we have the lower bound
\begin{align*}  
G^*(h)\geq \sup &\biggl\{ \iiint_{U_X\times V_{Y,t}} -(\tilde A(\nabla_X f,\j_0,\cdot,\cdot,\cdot)-(\nabla_X f\cdot \j_0)) \, \d X \d Y \d t\notag\\
&-\iint_{V_{Y,t}} {\langle (\partial_t+X\cdot\nabla_Y)f(\cdot,Y,t),h(\cdot,Y,t)\rangle}\, \d Y\d t\biggr \},
\end{align*}
where the supremum now is taken with respect to $f\in C_0^\infty(U_X\times V_{Y,t})\subset C^\infty_{\K,0}(\overline{U_X\times V_{Y,t}})$.
 Moreover, as $G^*(h) < +\infty$, we have that
\begin{align*}  
&-\iint_{V_{Y,t}} \langle (\partial_t+X\cdot\nabla_Y)f(\cdot,Y,t),h(\cdot,Y,t)\rangle\, \d Y\d t\notag\\
&\leq  \iiint_{U_X\times V_{Y,t}} (\tilde A(\nabla_X f,\j_0,\cdot,\cdot,\cdot)-(\nabla_X f\cdot \j_0)) \, \d X \d Y \d t +G^*(h)<\infty,
\end{align*}
for every $f\in C_0^\infty(U_X\times V_{Y,t})$ fixed. {Note that by replacing $f$ with $-f$ in the above argument we also obtain a lower bound}. In particular,
%
{$$\sup \ \biggl |\iint_{V_{Y,t}} \langle (\partial_t+X\cdot\nabla_Y)h(\cdot,Y,t),f(\cdot,Y,t)\rangle\, \d Y\d t\biggr | <\infty,$$
where the supremum is taken over $f \in C_0^\infty(U_X\times V_{Y,t})$ such that $||f||_{L_{Y,t}^2(V_{Y,t},H_{X,0}^1(U_X))}\leq 1$.} Using that $C_0^\infty(U_X\times V_{Y,t})$ is dense in $L_{Y,t}^2(V_{Y,t},H_{X,0}^1(U_X))$ we can conclude that
$$(\partial_t+X\cdot\nabla_Y)h\in L_{Y,t}^2(V_{Y,t},H_X^{-1}(U_X))$$ and this observation proves \eqref{e.finite.G*}.
\end{proof}

\subsection{Bounding $G^*$ from below} Lemma \ref{red} gives at hand that  in place of \eqref{e.get.to.nullmin}, we have reduced the matter to proving that
\begin{equation}
\qquad G^*(h) \ge 0\mbox{ for all }h\in W\cap L_{Y,t}^2(V_{Y,t},H_{X,0}^1(U_X)).
\end{equation}
Furthermore, note that for $\tilde h\in W\cap C_{X,0}^\infty(\overline{U_X\times V_{Y,t}})$ we have
\begin{equation}\label{smoothbound}
    G^*(h) \geq G^*(\tilde h) - \|f^*\|_{L^2_{Y,t}(V_{Y,t},H_X^{-1}(U_X))} \| h-\tilde h \|_{L^2_{Y,t}(V_{Y,t},H^1_{X}(U_X))}.
\end{equation}
As we are to establish a lower bound on $G^*$,  we may restrict to taking the supremum over $f^*$ such that
\begin{equation}\label{f*norm}
\|f^*\|_{L_{Y,t}^2(V_{Y,t},H_X^{-1}(U_X))}\leq 1.
\end{equation}
In Lemma \ref{red+} below we prove that
$$G^*(h) \ge 0\mbox{ for all }h\in W\cap C_{X,0}^\infty(\overline{U_X\times V_{Y,t}}).$$
By combining this with \eqref{smoothbound} and \eqref{f*norm} we see that
\[
    G^*(h) \geq G^*(\tilde h) - \| h-\tilde h \|_{L^2_{Y,t}(V_{Y,t},H^1_{X,0}(U_X))}\geq  - \| h-\tilde h \|_{L^2_{Y,t}(V_{Y,t},H^1_{X,0}(U_X))},
\]
for all $\tilde h \in W\cap C^\infty_{X,0}(\overline{U_X\times V_{Y,t}})$. Furthermore, by the definitions of $W$, and $L_{Y,t}^2(V_{Y,t},H_{X,0}^1(U_X))$, we can choose a sequence $h_j\in W\cap C^\infty_{X,0}(\overline{U_X\times V_{Y,t}})$ such that
\[
\lim_{j\rightarrow\infty} \| h-h_j \|_{L^2_{Y,t}(V_{Y,t},H^1_{X,0}(U_X))} = 0.
\]
Hence the proof that $G^*(h)\geq 0$, and hence the final piece in the proof of existence in Theorem \ref{weakdp1}, is to prove the following lemma.

\begin{lemma}\label{red+}
\begin{equation}
\label{e.get.to.nullmin2}
\qquad G^*(h) \ge 0\mbox{ for all }h\in W\cap C^\infty_{X,0}(\overline{U_X\times V_{Y,t}}).
\end{equation}
\end{lemma}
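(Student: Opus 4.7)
The plan is to exhibit, for each $\epsilon>0$, an admissible triple $(f_\epsilon, \j_\epsilon, f_\epsilon^*)$ in the constraint set of \eqref{e.second.G*} such that the liminf of the corresponding values is non-negative. The guiding principle is the formal choice $f=-h$, $\j = A(-\nabla_X h) = -A(\nabla_X h)$ (using homogeneity in $(iii)$ of Definition \ref{class1}). Indeed, by the defining property \eqref{e.A.eqiff} of $\tilde A$, this choice makes $\tilde A(\nabla_X f, \j) - \nabla_X f\cdot\j\equiv 0$, and the constraint \eqref{e.constraint.fj} forces $f^* = -\L h - g^*$, so that $\iint\langle f^*, h+f\rangle = \iint\langle -\L h - g^*, 0\rangle = 0$. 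The only obstruction is that $-h\in W_{X,0}$ but not in $W_0$ in general, since $h$ need not vanish on the incoming portion of $\partial V_{Y,t}$.

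To regularize, I would construct a cutoff $\chi_\epsilon\in C^\infty(\overline{U_X\times V_{Y,t}})$ with values in $[0,1]$ satisfying: (a) $\chi_\epsilon = 0$ in a neighborhood of the incoming set $\{(X,Y,t)\in\overline{U_X}\times\partial V_{Y,t}: (X,1)\cdot N_{Y,t}<0\}$, (b) $\chi_\epsilon = 1$ in a neighborhood of the outgoing set $\{(X,1)\cdot N_{Y,t}>0\}$, and (c) $\chi_\epsilon\to 1$ pointwise a.e.\ on $U_X\times V_{Y,t}$. Such a cutoff must depend on $X$, but its construction is possible using the $C^{1,1}$-regularity of $\partial V_{Y,t}$ (extend $N_{Y,t}$ smoothly into a neighborhood) and the fact that the transition set $\{(X,1)\cdot N_{Y,t}=0\}$ is lower-dimensional. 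Set $f_\epsilon = -h\chi_\epsilon$; this lies in $W_0$ since $h=0$ on $\partial U_X\times V_{Y,t}$ and $\chi_\epsilon=0$ on the incoming part of $\partial V_{Y,t}$. Choose $\j_\epsilon = A(\nabla_X f_\epsilon)$, which annihilates the first integral by \eqref{e.A.eqiff}, and let $f_\epsilon^*=\nabla_X\cdot\j_\epsilon - g^* -(\partial_t+X\cdot\nabla_Y)f_\epsilon$, as required.

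The expression inside the supremum then reduces to $E_\epsilon := \iint\langle f_\epsilon^*, h(1-\chi_\epsilon)\rangle\d Y\d t$. Expanding $f_\epsilon^*$, integrating by parts in $X$ against the test function $h(1-\chi_\epsilon)$ (which vanishes on $\partial U_X$), and using the transport identity $(\partial_t+X\cdot\nabla_Y)\chi_\epsilon\cdot(1-\chi_\epsilon) = -\tfrac{1}{2}(\partial_t+X\cdot\nabla_Y)(1-\chi_\epsilon)^2$ followed by integration by parts in the transport direction, I separate $E_\epsilon$ into "interior" terms and a boundary term on $\partial V_{Y,t}$. The interior contributions $\langle \chi_\epsilon\L h, h(1-\chi_\epsilon)\rangle$ and $\langle g^*, h(1-\chi_\epsilon)\rangle$ vanish as $\epsilon\to 0$ by dominated convergence together with $h(1-\chi_\epsilon)\to 0$ in $L^2_{Y,t}(V_{Y,t},H^1_{X,0}(U_X))$, using that $h$ is smooth so that $\L h\in L^2_{Y,t}(V_{Y,t},H^{-1}_X(U_X))$ by the linear growth property $(i)$ of Definition \ref{class1}. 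The surviving boundary term has the form $-\tfrac{1}{2}\int_{U_X}\iint_{\partial V_{Y,t}}h^2(1-\chi_\epsilon)^2(X,1)\cdot N_{Y,t}\d\sigma\d X$, and in the limit $(1-\chi_\epsilon)^2\to 1$ on the incoming part and $\to 0$ on the outgoing part, producing
\[
\liminf_{\epsilon\to 0}E_\epsilon \geq -\tfrac{1}{2}\int_{U_X}\iint_{\{(X,1)\cdot N_{Y,t}<0\}}h^2(X,1)\cdot N_{Y,t}\d\sigma\d X \geq 0,
\]
since $(X,1)\cdot N_{Y,t}<0$ on the incoming set and $h^2\geq 0$. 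This gives $G^*(h)\geq 0$.

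The main obstacle is the cutoff construction together with controlling the cross-term $\iiint A(\nabla_X f_\epsilon)\cdot h\nabla_X\chi_\epsilon$ that arises from the $X$-dependence of $\chi_\epsilon$. Since $\nabla_X \chi_\epsilon$ is concentrated near the transition set where $(X,1)\cdot N_{Y,t}\approx 0$, a careful choice of $\chi_\epsilon$ (for instance $\chi_\epsilon(X,Y,t)=\sigma_\epsilon((X,1)\cdot\tilde N(Y,t))$ with $\sigma_\epsilon$ a suitable one-dimensional cutoff and $\tilde N$ a smooth extension of $N_{Y,t}$) keeps $h\nabla_X\chi_\epsilon$ supported near $\partial V_{Y,t}$ with controlled size; the linear growth $|A(\nabla_X f_\epsilon)|\leq\Lambda|\nabla_X f_\epsilon|$ (property $(i)$ of $M(\Lambda)$) then allows the corresponding error terms to be dominated and shown to vanish (or to have the right sign) in the limit $\epsilon\to 0$.
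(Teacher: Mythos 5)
Your strategy is the natural one and is close in spirit to the paper's: take $f\approx-h$ and $\j=A(\nabla_Xf,\cdot)$ so that \eqref{e.A.eqiff} annihilates the bulk integrand of \eqref{e.second.G*}, and reduce everything to a boundary integral that localizes onto the incoming set $\{(X,1)\cdot N_{Y,t}<0\}$, where it is non-negative. Your sign bookkeeping, your treatment of the transport cross-term via $(1-\chi)\,\partial\chi=-\tfrac12\,\partial(1-\chi)^2$ and an integration by parts in the $(Y,t)$-direction, and the limiting boundary term are all correct. The structural difference is that the paper never puts a cutoff into the bulk variable: citing the passage between (3.23) and (3.26) of \cite{LN} and the convexity properties of $\tilde A$, it first \emph{decouples} the function appearing in the boundary integral into an independent variable $b\in C^\infty_{\K,0}(\overline{U_X\times V_{Y,t}})$ (the functional $\tilde G^*$), so that $\tilde f=-h$ is admissible exactly because $\tilde f$ only has to lie in $W\cap C^\infty_{X,0}$; the cutoff (Lemma \ref{lemma1c}) then acts only inside the pure boundary integral $-\tfrac12\int_{U_X}\iint_{\partial V_{Y,t}}(b+h)^2(X,1)\cdot N_{Y,t}\,\d\sigma_{Y,t}\d X$, where $\nabla_Xb$ never meets $A$.

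The gap in your version sits precisely at the point you flag as ``the main obstacle''. With $f_\epsilon=-h\chi_\epsilon$ and $\j_\epsilon=A(\nabla_Xf_\epsilon,\cdot)$ the surviving expression contains
\[
-\iiint_{U_X\times V_{Y,t}}A(\nabla_Xf_\epsilon,\cdot)\cdot\nabla_X\bigl(h(1-\chi_\epsilon)\bigr)\,\d X\d Y\d t,
\]
and both gradients contain the same singular vector $\eta:=-h\nabla_X\chi_\epsilon$. By coercivity (Definition \ref{class1}$(ii)$) the diagonal part is bounded above by $-\Lambda^{-1}\iiint h^2|\nabla_X\chi_\epsilon|^2$ up to terms controlled by $\|\nabla_Xh\|_{L^2}^2$: it has the \emph{wrong} sign, so it cannot merely ``have the right sign'' --- it must vanish. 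Whether it does is delicate. Your explicit cutoff $\chi_\epsilon=\sigma_\epsilon((X,1)\cdot\tilde N(Y,t))$ does not satisfy your own condition (c), since $(X,1)\cdot\tilde N$ is negative on a set of positive measure in the interior; repairing this forces a second cutoff $\theta_\epsilon$ confining the $X$-transition to a collar $\{\mathrm{dist}((Y,t),\partial V_{Y,t})<\epsilon_2\}$, and then $\iiint h^2|\nabla_X\chi_\epsilon|^2\sim\epsilon_2/\epsilon_1$, where $\epsilon_1$ is the transition width in the variable $(X,1)\cdot\tilde N$. The term therefore blows up to $-\infty$ (making the lower bound on $G^*(h)$ vacuous) unless the two scales are chosen with $\epsilon_2\ll\epsilon_1$ --- a two-scale structure your single-parameter construction does not provide and your proposal does not identify. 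With that choice the argument can in fact be pushed through (the transport term is saved by the exact-derivative trick, not by smallness of the collar), but one must still verify that the resulting Lipschitz $f_\epsilon$ belongs to $W_0$, i.e.\ is approximable by elements of $C^\infty_{\K,0}(\overline{U_X\times V_{Y,t}})$ in the $W$-norm, which is a nontrivial density statement of exactly the kind the paper defers to \cite{LN}. As written, the proof is incomplete at its decisive step.
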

\begin{proof}
 To start the proof of the lemma we first note that we have, as $f\in W_0$,  that $$(\partial_t+X\cdot\nabla_Y)f \in L_{Y,t}^2(V_{Y,t},H_X^{-1}(U_X)),$$ and hence we can replace $f^*$ by {$f^* - (\partial_t+X\cdot\nabla_Y)f$} in the variational formula \eqref{e.second.G*} for $G^*$ to get
\begin{align*}
G^*(h) \geq \sup_{(f,\j,f^*)} &\biggl\{ \iiint_{U_X\times V_{Y,t}}  -(\tilde A(\nabla_X f,\j,\cdot,\cdot,\cdot)-(\nabla_X f\cdot \j)) \, \d X \d Y \d t\notag\\
&+\iint_{V_{Y,t}} {\langle (f^* - (\partial_t+X\cdot\nabla_Y)f)(\cdot,Y,t),(h(\cdot,Y,t) + f(\cdot,Y,t))\rangle}\, \d Y\d t\biggr \},
\end{align*}
where the supremum now is taken with respect to
\begin{align}
\label{e.rest}(f,\j,f^*)\in (W\cap C^\infty_{\K,0}(\overline{U_X\times V_{Y,t}}))\times (L_{Y,t}^2(V_{Y,t},L^2_X(U_X)))^m\times L_{Y,t}^2(V_{Y,t},H_X^{-1}(U_X)),
\end{align}
subject to the constraint
\begin{equation}
\label{e.constraint.fj2}
\nabla_X\cdot \j = f^* + g^*.
\end{equation}
Next  using that $f\in C^\infty_{\K,0}(\overline{U_X\times V_{Y,t}})$, $h\in C^\infty_{X,0}(\overline{U_X\times V_{Y,t}})$, we have
\begin{align*}  
&\iint_{V_{Y,t}} -\langle (\partial_t+X\cdot\nabla_Y)f(\cdot,Y,t),(h(\cdot,Y,t)+f(\cdot,Y,t))\rangle\notag\\
&=\iint_{V_{Y,t}} \langle (\partial_t+X\cdot\nabla_Y)h(\cdot,Y,t),f(\cdot,Y,t)\rangle\, \d Y\d t\notag\\
&\quad {-\int_{U_X}\iint_{\partial V_{Y,t}} \bigl (\frac 1 2 f^2+fh)(X,1)\cdot N_{Y,t}\, \d\sigma_{Y,t}\d X.}
\end{align*}
Using the identity in the last display we see that
\begin{align}
\label{e.G*2a}
G^*(h)\geq \sup_{(f,\j,f^*)} &\biggl\{ \iiint_{U_X\times V_{Y,t}}   -(\tilde A(\nabla_X f,\j,\cdot,\cdot,\cdot)-(\nabla_X f\cdot \j)) \, \d X \d Y \d t\notag\\
&+\iint_{V_{Y,t}} \langle f^*,(h(\cdot,Y,t)+f(\cdot,Y,t))\rangle +\langle (\partial_t+X\cdot\nabla_Y)h(\cdot,Y,t),f(\cdot,Y,t)\rangle\, \d Y\d t\notag\\
&-\int_{U_X}\iint_{\partial V_{Y,t}} \bigl (\frac 1 2 f^2+fh)(X,1)\cdot N_{Y,t}\, \d\sigma_{Y,t}\d X\biggr \},
\end{align}
where the supremum still is with respect to $(f,\j,f^*)$ as in \eqref{e.rest}  subject to \eqref{e.constraint.fj2}. Now, by arguing exactly as in the passage between displays (3.23) and (3.26) in \cite{LN}, using the properties of $\tilde A$, we can conclude that it suffices to prove that
$\tilde G^*(h)\geq 0$ where
\begin{align*}
\tilde G^*(h) := \sup_{(\tilde f,\j,f^*,b)} &\biggl\{ \iiint_{U_X\times V_{Y,t}}  -(\tilde A(\nabla_X \tilde f,\j,\cdot,\cdot,\cdot)-(\nabla_X \tilde f\cdot \j)) \, \d X \d Y \d t\notag\\
&+\iint_{V_{Y,t}} \langle f^*,(h(\cdot,Y,t)+\tilde f(\cdot,Y,t))\rangle +\langle (\partial_t+X\cdot\nabla_Y)h(\cdot,Y,t),\tilde f(\cdot,Y,t)\rangle\, \d Y\d t\notag\\
&-\int_{U_X}\iint_{\partial V_{Y,t}} \bigl (\frac 1 2 b^2+bh)(X,1)\cdot N_{Y,t}\, \d\sigma_{Y,t}\d X\biggr \},
\end{align*}
and where the supremum is taken with respect to all $(\tilde f,\j,f^*,b)$ in the set
\begin{align*}
(W\cap C^\infty_{X,0}(\overline{U_X\times V_{Y,t}}))\times (L_{Y,t}^2(V_{Y,t},L^2_X(U_X)))^m\times L_{Y,t}^2(V_{Y,t},H_X^{-1}(U_X))\times C^\infty_{\K,0}(\overline{U_X\times V_{Y,t}}),
\end{align*}
subject to the condition stated in \cite{LN}, i.e., that
$$\Gamma(\tilde f,b):=||\tilde f||_{L_{Y,t}^2(V_{Y,t},H_X^1(U_X))}+||b||_{L_{Y,t}^2(V_{Y,t},H_X^1(U_X))}\leq\Gamma$$
for some large but fixed $\Gamma\geq 1$. However, this implies that $\tilde f:=-h$ is an admissible function. With this choice of $\tilde f$, we then let $\j:= A(-\nabla_X h,X,Y,t) \in (L_{Y,t}^2(V_{Y,t},L^2_X(U_X)))^m$ and then $$f^* = \nabla_X\cdot \j -g^*\in  L_{Y,t}^2(V_{Y,t},H_X^{-1}(U_X)).$$
Using this we deduce that
\begin{align*}
\tilde G^*(h) \geq \sup_{b} &\biggl\{-\int_{U_X}\iint_{\partial V_{Y,t}} \frac 1 2 (b+h)^2(X,1)\cdot N_{Y,t}\, \d\sigma_{Y,t}\d X\biggl \},
\end{align*}
where supremum now is taken with respect to $b\in C^\infty_{\K,0}(\overline{U_X\times V_{Y,t}})$. Using Lemma \ref{lemma1c}  below  it follows that
\begin{align*}
\sup_{b} &\biggl\{-\int_{U_X}\iint_{\partial V_{Y,t}} \frac 1 2 (b+h)^2(X,1)\cdot N_{Y,t}\, \d\sigma_{Y,t}\d X\biggl \}\geq 0.
\end{align*}
The proof of the lemma is therefore complete.
\end{proof}

\begin{lemma}\label{lemma1c} Assume that $h\in W(U_X\times V_{Y,t})\cap C^\infty_{X,0}(\overline{U_X\times V_{Y,t}})$. Then
\begin{align}
\sup_{b\in W\cap C^\infty_{\K,0} (U_X\times V_{Y,t})} -\iiint_{U_X\times\partial V_{Y,t}} {(b+h)^2}(X,1)\cdot N_{Y,t}\, \d\sigma_{Y,t}\d X\geq 0.
\end{align}
\end{lemma}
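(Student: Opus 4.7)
The integrand $-(b+h)^2 (X,1)\cdot N_{Y,t}$ is nonnegative exactly on the part of $U_X\times\partial V_{Y,t}$ lying in the Kolmogorov boundary, namely
$\Sigma^-:=\{(X,Y,t)\in U_X\times\partial V_{Y,t}\mid (X,1)\cdot N_{Y,t}<0\}$,
and nonpositive on
$\Sigma^+:=\{(X,Y,t)\in U_X\times\partial V_{Y,t}\mid (X,1)\cdot N_{Y,t}>0\}$.
Since every $b\in C^\infty_{\K,0}(\overline{U_X\times V_{Y,t}})$ vanishes on $\Sigma^-\subset\partial_{\K}(U_X\times V_{Y,t})$, writing $\nu:=(X,1)\cdot N_{Y,t}$, we have the splitting
\begin{equation*}
-\iiint_{U_X\times\partial V_{Y,t}}(b+h)^2\nu\,\d\sigma_{Y,t}\d X=\iiint_{\Sigma^-}h^2|\nu|\,\d\sigma_{Y,t}\d X-\iiint_{\Sigma^+}(b+h)^2\nu\,\d\sigma_{Y,t}\d X.
\end{equation*}
The first term is nonnegative and independent of $b$, so it suffices to exhibit a sequence $\{b_\delta\}\subset W\cap C^\infty_{\K,0}(\overline{U_X\times V_{Y,t}})$ along which the second term tends to $0$ as $\delta\to 0$.

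\textbf{Construction.} The idea is to take $b_\delta$ which equals $-h$ on $\Sigma^+$ away from the transition set $\{\nu=0\}$, while still vanishing on the full Kolmogorov boundary. Using the $C^{1,1}$-regularity of $\partial V_{Y,t}$, extend $N_{Y,t}$ to a Lipschitz vector field on a tubular neighborhood of $\partial V_{Y,t}$ (via the nearest-point projection) and mollify to obtain a smooth extension; combined with a cutoff away from the boundary, this yields $\bar\nu\in C^\infty(\overline{U_X\times V_{Y,t}})$ that agrees with $(X,1)\cdot N_{Y,t}$ on $U_X\times\partial V_{Y,t}$. For $\delta>0$ choose $\phi_\delta\in C^\infty(\R;[0,1])$ with $\phi_\delta\equiv 0$ on $(-\infty,\delta]$ and $\phi_\delta\equiv 1$ on $[2\delta,\infty)$; set $\eta_\delta:=\phi_\delta\circ\bar\nu$ and
\begin{equation*}
b_\delta:=-h\,\eta_\delta\in C^\infty(\overline{U_X\times V_{Y,t}}).
\end{equation*}
Since $h\in C^\infty_{X,0}$ vanishes on $\partial U_X\times\overline{V_{Y,t}}$, so does $b_\delta$; and on $\Sigma^-$ one has $\bar\nu<0<\delta$, whence $\eta_\delta=0$ and $b_\delta=0$. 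Hence $b_\delta\in C^\infty_{\K,0}(\overline{U_X\times V_{Y,t}})\subset W\cap C^\infty_{\K,0}(\overline{U_X\times V_{Y,t}})$.

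\textbf{Verification and main obstacle.} On $\Sigma^+\cap\{\bar\nu\geq 2\delta\}$ one has $\eta_\delta=1$, so $b_\delta+h=0$ and the integrand vanishes. On $\Sigma^+\cap\{0<\bar\nu<2\delta\}$ we have $0<\nu<2\delta$ and $|b_\delta+h|=|h|(1-\eta_\delta)\leq\|h\|_{L^\infty(\overline{U_X\times V_{Y,t}})}$, which is finite since $h$ is smooth on the compact closure. Therefore
\begin{equation*}
\biggl|\iiint_{\Sigma^+}(b_\delta+h)^2\nu\,\d\sigma_{Y,t}\d X\biggr|\leq 2\delta\,\|h\|^2_{L^\infty}\,|U_X|\,\sigma_{Y,t}(\partial V_{Y,t})\longrightarrow 0
\end{equation*}
as $\delta\to 0$, which produces the desired lower bound $0$ for the supremum. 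The only genuinely delicate point is the smoothness of the extension $\bar\nu$, but this is routine given the $C^{1,1}$-regularity of $\partial V_{Y,t}$; moreover the argument goes through verbatim if one first builds $b_\delta$ Lipschitz and then mollifies in order to land in $C^\infty_{\K,0}$, so no substantial obstacle is expected.
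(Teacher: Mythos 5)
Your proposal is correct and follows essentially the same route as the paper: the paper takes $b_r=(\psi_r-1)h$ with a cutoff $\psi_r$ built from $\big((X,1)\cdot N_{Y,t}\big)^+$ that tends to the indicator of $\{(X,1)\cdot N_{Y,t}\le 0\}$, so that $b_r+h=\psi_r h$ kills the contribution from $\Sigma^+$ in the limit $r\to\infty$, exactly as your $b_\delta=-\eta_\delta h$ does as $\delta\to 0$. The smoothness of the cutoff is glossed over in the paper as well (it is delegated to the corresponding lemma in \cite{LN}), so your closing remark about mollification addresses a technicality present in both arguments rather than a new gap.
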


Lemma \ref{lemma1c} is Lemma 3.7 in \cite{LN} and in the next subsection we supply parts of the proof for completion.

\subsection{Proof of Lemma \ref{lemma1c}} Let $\psi(s)\in C^\infty(\R)$ be such that $0\leq \psi \leq 1$,
\begin{equation*}
        \psi \equiv 1\ \mbox{on }[ 0,1],\ \psi \equiv 0\ \mbox{on }[ 2,\infty),
\end{equation*}
$|\psi'|\leq 2$ and such that $\sqrt{1-\psi^2}\in C^\infty(\R)$. Based on $\psi$ we introduce for $r$, $0\leq r <\infty$
\begin{equation}\label{bruu}
\psi_r(X,Y,t) := \psi\biggl( r\, \frac{\big((X,1)\cdot N_{Y,t}\big)^+}{1+|X|^2} \biggr),
\end{equation}
where we use the notation $s^+:=\max\lbrace s,0 \rbrace$ for $s\in\R$. As $h$ is smooth, and $U_X$ and $V_{Y,t}$ are bounded domains, we have
\begin{equation}\label{bdryfinite}
    \iiint_{U_X\times \partial V_{Y,t}} h^2|(X,1)\cdot N_{Y,t}|\d X\d \sigma_{Y,t} < \infty.
\end{equation}
Let, for any $r\geq 0$,
\begin{equation}\label{br}
    {b_r := (\psi_r-1)h.}
\end{equation}
As in the proof of Lemma 3.7 in \cite{LN} it follows that
\begin{equation}\label{bdryfinitea}
b_r\in W(U_X\times V_{Y,t}).
\end{equation}
By construction, $b_r$ vanishes on $\partial_\K(U_X\times V_{Y,t})$. Together with \eqref{bdryfinitea}, this yields that $b_r\in W\cap C^\infty_{\K,0}(\overline{U_X\times V_{Y,t}})$. Furthermore,
\begin{equation*}
    \begin{split}
        -\iiint_{U_X\times\partial V_{Y,t}} {(b_r+h)^2}(X,1)\cdot N_{Y,t}\, \d\sigma_{Y,t}\d X &= -\iiint_{U_X\times\partial V_{Y,t}} \psi_r^2h^2(X,1)\cdot N_{Y,t}\, \d\sigma_{Y,t}\d X.
    \end{split}
\end{equation*}
Letting $r\to \infty$ we see that
\begin{align*}
&\lim_{r\rightarrow\infty} -\iiint_{U_X\times\partial V_{Y,t}} \psi_r^2h^2(X,1)\cdot N_{Y,t}\, \d\sigma_{Y,t}\d X\\
&=\iiint_{U_X\times\partial V_{Y,t}} h^2\big((X,1)\cdot N_{Y,t}\big)^+\, \d\sigma_{Y,t}\d X\geq 0.\end{align*}

\subsection{The proof of Theorem \ref{weakdp1}} Retracing the argument we see by \eqref{e.bonna} and \eqref{e.null.min} that
\begin{equation}
\label{e.null.minbonnG}
J[f_1,g^*]=0\mbox{ for some }f_1\in W_0.
\end{equation}
Using Lemma \ref{corre}  we can conclude that $f_1$ is the unique weak solution $f_1\in W_0$  to $\L  u=g^\ast$ in $U_X\times V_{Y,t}$ in the sense of equation \eqref{weak3}. This completes the proof of existence and uniqueness part of Theorem \ref{weakdp1}. The quantitative estimate follows in the standard way.

\subsection{A comparison principle} Assume that $u\in W(U_X\times V_{Y,t})$ is a weak sub-solution to the equation
 \begin{equation}\label{dpweak+llhh}
	\nabla_X\cdot(A(\nabla_X u,X,Y,t))-(\partial_t+X\cdot\nabla_Y)u = g^* \text{ in } \ U_X\times V_{Y,t}.
\end{equation}
By definition this means in particular that $u\in W(U_X\times V_{Y,t})$. Given $u$ we now let
$v\in W(U_X\times V_{Y,t})$ be the unique weak solution to the problem
 \begin{equation}\label{dpweak+ll}
\begin{cases}
	\nabla_X\cdot(A(\nabla_X v,X,Y,t))-(\partial_t+X\cdot\nabla_Y)v = g^*  &\text{in} \ U_X\times V_{Y,t}, \\
      v = u & \text{on} \ \partial_\K(U_X\times V_{Y,t}),
\end{cases}
\end{equation}
in the sense that
\begin{eqnarray}\label{weak1ha}
v\in W(U_X\times V_{Y,t}),\ (v-u)\in  W_0(U_X\times V_{Y,t}),
    \end{eqnarray}
    and in the sense that \eqref{weak3} holds for all $ \phi\in L_{Y,t}^2(V_{Y,t},H_{X,0}^1(U_X))$. By Theorem \ref{weakdp1} $v$ exists and is unique. We want to prove that
    $u\leq v$ a.e in $U_X\times V_{Y,t}$. To achieve this we let $\epsilon>0$ be arbitrary and we use the test function $\phi=(u-v-\epsilon)^+$.
    Then $\phi$ is a non-negative admissible test function and $\phi=0$ on $\partial_\K(U_X\times V_{Y,t})$.  Hence,
    \begin{align}\label{wksolcc}
&\iiint_{U_X\times V_{Y,t}}A(\nabla_X u, X,Y,t)\cdot\nabla_X\phi\,\d X \d Y \d t\notag\\
&+\iint_{V_{Y,t}}\ \langle g^\ast(\cdot,Y,t)+ (\partial_t+X\cdot\nabla_Y)u(\cdot,Y,t),\phi(\cdot,Y,t)\rangle\, \d Y \d t\leq 0,
\end{align}
and
\begin{align}\label{wksolcc+}
&\iiint_{U_X\times V_{Y,t}}A(\nabla_X v, X,Y,t)\cdot\nabla_X\phi\,\d X \d Y \d t\notag\\
&+\iint_{V_{Y,t}}\ \langle g^\ast(\cdot,Y,t)+ (\partial_t+X\cdot\nabla_Y)v(\cdot,Y,t),\phi(\cdot,Y,t)\rangle\, \d Y \d t=0.
\end{align}
Subtracting these relations, we get
\begin{align}\label{wksolcc++}
&\iiint_{U_X\times V_{Y,t}}(A(\nabla_X v, X,Y,t)-A(\nabla_X u, X,Y,t))\cdot\nabla_X\phi\,\d X \d Y \d t\notag\\
&+\iint_{V_{Y,t}}\langle\partial_t+X\cdot\nabla_Y)(v-u)(\cdot,Y,t),\phi(\cdot,Y,t)\rangle\, \d Y \d t\geq 0.
\end{align}
Using the property \eqref{assump1}-$(ii)$, we now first note that
\begin{align}\label{wksolcc++a}
&\iiint_{U_X\times V_{Y,t}}(A(\nabla_X v, X,Y,t)-A(\nabla_X u, X,Y,t))\cdot\nabla_X\phi\,\d X \d Y \d t\notag\\
&\leq -\Lambda^{-1}\iiint_{U_X\times V_{Y,t}}|\nabla_X(u-v-\epsilon)^+|^2\, \d X \d Y \d t.
\end{align}
Second, again using the definition of $W(U_X\times V_{Y,t})$ and that $(v-u)\in  W_0(U_X\times V_{Y,t})$, we see that  we see that there exists a sequence $\{f_j\}$, $f_j\in C_{\K,0}^\infty(\overline{U_X\times V_{Y,t}})$ such
\begin{align}\label{wksolcc++b}
&\iint_{V_{Y,t}}(\partial_t+X\cdot\nabla_Y)(v-u)(\cdot,Y,t),\phi(\cdot,Y,t)\rangle\, \d Y \d t\notag\\
&=-\lim_{j\to\infty}
\iiint_{U_X\times V_{Y,t}}(\partial_t+X\cdot\nabla_Y)(f_j-\epsilon)^+(f_j-\epsilon)^+\, \d X \d Y \d t\notag\\
&=-\frac 12\lim_{j\to\infty}
\iint_{U_X\times \partial V_{Y,t}}((f_j-\epsilon)^+)^2\, (X,1)\cdot N_{Y,t}\d \sigma_{Y,t}\leq 0,
\end{align}
as $f_j=0$ on $\partial_\K(U_X\times V_{Y,t})$. Hence, combining \eqref{wksolcc++}-\eqref{wksolcc++b} we conclude that
\begin{align}\label{wksolcc++a1}
\iiint_{U_X\times V_{Y,t}}|\nabla_X(u-v-\epsilon)^+|^2\, \d X \d Y \d t\leq 0.
\end{align}
Finally, using, for a.e. $(Y,t)\in V_{Y,t}$, the Poincar{\'e} inequality on $U_X$ we deduce from \eqref{wksolcc++a1} that
\begin{align}\label{wksolcc++a2}
\iiint_{U_X\times V_{Y,t}}|(u-v-\epsilon)^+|^2\, \d X \d Y \d t\leq 0.
\end{align}
Hence $(u-v-\epsilon)^+=0$ a.e in $U_X\times V_{Y,t}$ and hence $u\leq v+\epsilon$ a.e. in $U_X\times V_{Y,t}$. We can conclude that we have proved the following theorem.

\begin{theorem}\label{compa} Let $u\in W(U_X\times V_{Y,t})$ be a weak sub-solution to the equation in \eqref{dpweak+llhh} in the sense of Definition \ref{weaks}. Given $u$, let
$v\in W(U_X\times V_{Y,t})$ be the unique weak solution to the problem in \eqref{dpweak+ll} in the sense of Definition \ref{weaks}. Then $u\leq v$ a.e. in $U_X\times V_{Y,t}$. Similarly, if
$u\in W(U_X\times V_{Y,t})$ is a weak super-solution to the equation in \eqref{dpweak+llhh} in the sense of Definition \ref{weaks}, then $v\leq u$ a.e. in $U_X\times V_{Y,t}$.
\end{theorem}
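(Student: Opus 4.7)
The plan is a fairly classical comparison argument built on testing the two weak formulations against $(u-v-\epsilon)^+$ and exploiting the strict monotonicity \eqref{assump1}-$(ii)$ built into $R(\Lambda)$. For $\epsilon>0$ fixed, I would set $\phi:=(u-v-\epsilon)^+$. Since $v-u\in W_0(U_X\times V_{Y,t})$ by construction of $v$ (both $u$ and $v$ agree on $\partial_\K(U_X\times V_{Y,t})$ in the trace-free sense encoded by $W_0$), and since the positive-part truncation preserves $W_0$ membership, $\phi$ is a non-negative admissible test function in Definition \ref{weaks} for both the sub-solution inequality for $u$ and the solution identity for $v$.

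Subtracting the two weak formulations applied to $\phi$ yields
\begin{equation*}
\iiint_{U_X\times V_{Y,t}}\!(A(\nabla_X v)-A(\nabla_X u))\cdot\nabla_X\phi\,\d X\d Y\d t+\iint_{V_{Y,t}}\!\langle(\partial_t+X\cdot\nabla_Y)(v-u),\phi\rangle\,\d Y\d t\geq 0.
\end{equation*}
On the set $\{u>v+\epsilon\}$ one has $\nabla_X\phi=\nabla_X(u-v)$, so \eqref{assump1}-$(ii)$ gives the bound
\begin{equation*}
\iiint(A(\nabla_X v)-A(\nabla_X u))\cdot\nabla_X\phi\,\d X\d Y\d t\leq -\Lambda^{-1}\iiint|\nabla_X\phi|^2\,\d X\d Y\d t.
\end{equation*}

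The delicate step is the transport pairing. Because $v-u\in W_0$, I would approximate it in the $W$-norm by functions $f_j\in C^\infty_{\K,0}(\overline{U_X\times V_{Y,t}})$ and, for each $j$, apply the chain rule and divergence theorem to $\tfrac{1}{2}((f_j-\epsilon)^+)^2$ to obtain
\begin{equation*}
\iiint(\partial_t+X\cdot\nabla_Y)(f_j-\epsilon)^+\cdot(f_j-\epsilon)^+\,\d X\d Y\d t=\tfrac{1}{2}\int_{U_X}\!\iint_{\partial V_{Y,t}}((f_j-\epsilon)^+)^2(X,1)\cdot N_{Y,t}\,\d\sigma_{Y,t}\d X.
\end{equation*}
Because $f_j$ vanishes on the inflow portion of $\partial_\K$, the surface integrand is supported where $(X,1)\cdot N_{Y,t}\geq 0$ and is therefore non-negative. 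Passing $j\to\infty$, the transport pairing in the subtracted inequality is non-positive. Combining with the diffusion bound gives $\Lambda^{-1}\|\nabla_X\phi\|_{L^2}^2\leq 0$; the slicewise Poincar\'e inequality in $U_X$ then forces $\phi\equiv 0$ a.e., and letting $\epsilon\to 0^+$ yields $u\leq v$ a.e. The super-solution assertion follows by the same argument with the roles of $u$ and $v$ swapped and $\phi:=(v-u-\epsilon)^+$ tested against the two formulations.

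The main obstacle I expect is the rigorous handling of the transport term, since $v-u\in W_0$ does not a priori admit classical pointwise traces on $\partial V_{Y,t}$ and the chain-rule identity used above is not directly available for Sobolev elements of $W$. The resolution is to perform the computation on the smooth approximants $f_j$, where the divergence theorem is unambiguous and the boundary integrand carries the right sign thanks to the precise definition of the Kolmogorov inflow boundary in \eqref{Kolb}, and only then pass to the limit. A minor secondary point is verifying that truncation preserves $W_0$-membership for $\phi$; this follows from standard chain-rule preservation of the positive part in $L^2_{Y,t}(V_{Y,t},H^1_{X,0}(U_X))$ coupled with a commutation with the approximating smooth sequence.
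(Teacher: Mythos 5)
Your proposal is correct and follows essentially the same route as the paper: test both weak formulations with $(u-v-\epsilon)^+$, use the strict monotonicity \eqref{assump1}-$(ii)$ to control the diffusion term, handle the transport pairing by approximating $u-v\in W_0$ with functions in $C^\infty_{\K,0}(\overline{U_X\times V_{Y,t}})$ and invoking the divergence theorem together with the sign of $(X,1)\cdot N_{Y,t}$ off the Kolmogorov boundary, and conclude via the slicewise Poincar\'e inequality and $\epsilon\to 0^+$. No substantive differences from the paper's argument.
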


\section{Future research and open problems}\label{sec6}

In this paper we have initiated the study of weak solutions, and their regularity, for what we call nonlinear Kolmogorov-Fokker-Planck type equations. We believe that there are many directions to pursue in this field and in the following we formulate a number of problems.

Let $p$, $1<p<\infty$, be given and let $A=A(\xi,X,Y,t):\R^m\times\R^m\times\R^m\times\R\to\R^m$  be  continuous with respect to $\xi$, and measurable with respect to $X,Y$ and $t$. Assume that there exists a  finite constant $\Lambda\geq 1$ such that
     \begin{align}\label{assump1again}
     \Lambda^{-1}|\xi|^p\leq A(\xi,X,Y,t)\cdot\xi\leq \Lambda|\xi|^p
\end{align}
for almost every $(X,Y,t)\in\R^{N+1}$ and for all $\xi\in\mathbb{R}^m$. Given $A$ and $p$ we introduce the operator $\mathcal{L}_{A,p}$ through
\begin{eqnarray}\label{pLapKol}\mathcal{L}_{A,p}u:=\nabla_X\cdot (A(\nabla_X u(X,Y,t),X,Y,t))-(\partial_t+X\cdot\nabla_Y)u(X,Y,t).
\end{eqnarray}
This defines a class of strongly degenerate nonlinear parabolic PDEs  modelled on the classical PDE of Kolmogorov and the $p$-Laplace operator, and to our knowledge there is currently no literature devoted  to these operators. The results established in this paper concern $\mathcal{L}_{A,2}$ assuming that $A\in M(\Lambda)$ or $A\in R(\Lambda)$. We see a number of interesting research problems.

\noindent
{\bf Problem 1:} Establish existence and uniqueness of weak solutions to the Dirichlet problem
\begin{align}\label{Dppaintraa}
\begin{cases}
\mathcal{L}_{A,p}u = g^*,\quad  &\textrm{ in } U_X\times V_{Y,t},\\
u=g
,\quad  &\textrm{ on }\partial_\K(U_X\times V_{Y,t}).
\end{cases}
\end{align}

\noindent
{\bf Problem 2:} Prove higher integrability, local boundedness, Harnack inequalities and local H{\"o}lder continuity of weak solutions for the equation $\mathcal{L}_{A,p}u=0$
in the case $p\neq 2$. This is a challenging problem and the first step is probably to figure out how to replace the result of Bouchut \cite{Bou}, or the use of the fundamental solution constructed by Kolmogorov, in this case. The problem is already very interesting for the prototype
\begin{align}\label{plap}
\nabla_X\cdot(|\nabla_X u(X,Y,t)|^{p-2}\nabla_X u(X,Y,t))-(\partial_t+X\cdot\nabla_Y)u(X,Y,t)=0.
\end{align}

\noindent
{\bf Problem 3:} Consider the equation in \eqref{plap}. Prove bounds for $\nabla_Xu$ and local H{\"o}lder continuity of $\nabla_Xu$.  Note that this must be a difficult problem in the nonlinear setting due to the lack of ellipticity in the variable $Y$. Again, the right place to start is probably to (simply) consider
the equation $$\nabla_X\cdot(A(\nabla_Xu))-(\partial_t+X\cdot\nabla_Y)u=0,$$ where $A(\xi)$ has linear growth, i.e. a nonlinear $p=2$ case.

Finally, we discuss the very formulation of the Dirichlet problem. Consider the geometry of $U_X\times V_{Y,t}$ and let  $\Gamma:=\partial U_X\times V_{Y,t}$ and
\begin{align}
\Sigma^+&:=\{(X,Y,t)\in \overline{U_X}\times \partial V_{Y,t}\mid (X,1)\cdot N_{Y,t}>0\},\notag\\
\Sigma_0&:=\{(X,Y,t)\in \overline{U_X}\times \partial V_{Y,t}\mid (X,1)\cdot N_{Y,t}=0\},\notag\\
\Sigma^-&:=\{(X,Y,t)\in \overline{U_X}\times \partial V_{Y,t}\mid (X,1)\cdot N_{Y,t}<0\}.
\end{align}
Using this notation $\partial_\K(U_X\times V_{Y,t})=\Gamma\cup \Sigma^-$. Recall that $W(U_X\times V_{Y,t})$ is defined as the closure of $C^\infty(\overline{U_X\times V_{Y,t}})$ in the norm
     \begin{align}\label{weak1-+-}
     ||u||_{W(U_X\times V_{Y,t})}&:=\bigl (||u||_{L_{Y,t}^2(V_{Y,t},H_X^1(U_X))}^2+||(\partial_t+X\cdot\nabla_Y)u||_{L_{Y,t}^2(V_{Y,t},{H}_X^{-1}(U_X))}^2\bigr )^{1/2}.
    \end{align}
Assuming  $u\in W(U_X\times V_{Y,t})$,  it is relevant to define and study
the trace of  $u$ to $\Gamma\cup \Sigma^+\cup \Sigma_0\cup\Sigma^-$.  We let $=B^{2,2}_{1/2}(\partial U_X)$ denote the Besov space defined as the trace space of $H_X^1(U_X)$ to $\partial U_X$ (this space is often denoted $H^{1/2}(\partial U_X)$ in the literature). It is well known, that if $U_X$ is a bounded Lipschitz domain, then there exists
a bounded continuous non-injective operator $T:H_X^1(U_X)\to B^{2,2}_{1/2}(\partial U_X)$, called the trace operator, and a bounded continuous operator $E: B^{2,2}_{1/2}(\partial U_X)\to H_X^1(U_X)$ called the extension operator. The trace space of $L_{Y,t}^2(V_{Y,t},H_X^1(U_X))$ on $\Gamma$ is therefore $L_{Y,t}^2(V_{Y,t},B^{2,2}_{1/2}(\partial U_X))$ and
$$||u||_{L_{Y,t}^2(V_{Y,t}, B^{2,2}_{1/2}(\partial U_X))}\leq c||u||_{W(U_X\times V_{Y,t})}.$$
The trace to $\Sigma^+\cup \Sigma_0\cup\Sigma^-$ is less clear. Indeed, recall that the space $W_{X,0}(U_X\times V_{Y,t})$ is defined as the closure in the norm of
$W(U_X\times V_{Y,t})$ of $C_{X,0}^\infty(\overline{U_X\times V_{Y,t}})$. In particular, given $f\in W_{X,0}(U_X\times V_{Y,t})$ there exists $\{f_j\}$, $f_j\in C_{X,0}^\infty(\overline{U_X\times V_{Y,t}})$ such that
$$||f-f_j||_{W(U_X\times V_{Y,t})}\to 0\mbox{ as }j\to \infty,$$
and consequently,
$$||(\partial_t+X\cdot\nabla_Y)(f-f_j)||_{L_{Y,t}^2(V_{Y,t},{H}_X^{-1}(U_X))}\to 0\mbox{ as }j\to \infty.$$
Using this we see that
\begin{align}  \label{apagagain+}
&\iint_{V_{Y,t}} \langle(\partial_t+X\cdot\nabla_Y)f(\cdot,Y,t),f(\cdot,Y,t)\rangle \, \d Y\d t\notag\\
&= \lim_{j\to\infty}
\iint_{V_{Y,t}} \langle(\partial_t+X\cdot\nabla_Y)f_j(\cdot,Y,t),f_j(\cdot,Y,t)\rangle \, \d Y\d t\notag\\
&= \frac 12\lim_{j\to\infty} \iint_{\Sigma^+\cup \Sigma_0\cup\Sigma^-}f_j^2(X,Y,t)\, (X,1)\cdot N_{Y,t}\d \sigma_{Y,t}.
\end{align}
The first obstruction to a trace inequality is that $(X,1)\cdot N_{Y,t}\d \sigma_{Y,t}$ is a signed measure on $\Sigma^+\cup \Sigma_0\cup\Sigma^-$. Assuming that
$f\in W_{0}(U_X\times V_{Y,t})$ we deduce that
\begin{align}  \label{apagagain+a}
&\iint_{V_{Y,t}} \langle(\partial_t+X\cdot\nabla_Y)f(\cdot,Y,t),f(\cdot,Y,t)\rangle \, \d Y\d t\\
&= \frac 12\lim_{j\to\infty} \iint_{\Sigma^+\cup \Sigma_0}f_j^2(X,Y,t)\, (X,1)\cdot N_{Y,t}\d \sigma_{Y,t}.
\end{align}
Hence, in this case
\begin{align}  \label{apagagain+b}
\lim_{j\to\infty} \iint_{\Sigma^+\cup \Sigma_0}f_j^2(X,Y,t)\, (X,1)\cdot N_{Y,t}\d \sigma_{Y,t}\leq c||f||_{W(U_X\times V_{Y,t})},
\end{align}
and we see that we can extract a subsequence of $\{f_j\}$ converging in  $L^2(K,(X,1)\cdot N_{Y,t}\d \sigma_{Y,t})$ whenever $K$ is a compact subset of $\Sigma^+$. At the expense of additional notation the roles of $\Sigma^+$ and $\Sigma^-$ can be interchanged in this argument. This observation highlights the difficulty concerning the possibility of a trace inequality and concerning the identification of the trace space for $W(U_X\times V_{Y,t})$. This explains why we in this paper, as in \cite{LN}, have used the weaker formulation of the Dirichlet problem introduced.

\noindent
{\bf Problem 4:} What function space is the space of traces, to $\Gamma\cup \Sigma^+\cup \Sigma_0\cup\Sigma^-$, of $W(U_X\times V_{Y,t})$?

    \end{document}